\documentclass{article}
\usepackage{graphicx, amsmath, amssymb, amsthm} 
\usepackage{xcolor,float}

\title{}
\author{}
\date{\today}

\newtheorem{thm}{Theorem}[section]
\newtheorem{lem}[thm]{Lemma}
\newtheorem{prop}[thm]{Proposition}
\newtheorem{cor}[thm]{Corollary}

\theoremstyle{definition}

\newtheorem{rem}[thm]{Remark}
\newtheorem{ex}[thm]{Example}

\usepackage[normalem]{ulem}
\usepackage{enumerate}
\usepackage[nospace,noadjust]{cite}
\usepackage{url}

\title{A note on the unknotting number and the region unknotting number of weaving knots}
\author{Ayaka Shimizu\thanks{Institute for Global Leadership, Ochanomizu University, Ohtsuka, Tokyo 1128610 Japan. Email: shimizu.ayaka@ocha.ac.jp, shimizu1984@gmail.com},
Amrendra Gill\thanks{Vellore Institute of Technology-Andhra Pradesh, Amaravati, Andhra Pradesh 522237, India.
Email: amrendrasgill@gmail.com},
and Sahil Joshi\thanks{Indian Institute of Science Education and Research Pune, Pune, Maharashtra 411008, India.
Email: sahil.joshi@acads.iiserpune.ac.in}}
\date{\today}
\begin{document}
\maketitle

\begin{abstract}
A weaving knot is an alternating knot whose minimal diagram is a closed braid of a lattice-like pattern. 
In this paper, the warping degree of a braid diagram is defined, and upper bounds of the unknotting number and the region unknotting number for some families of weaving knots are given by diagrammatical and combinatorial examination of the warping degree of weaving knot diagrams. 
\end{abstract}

\section{Introduction} 

Let $B_W(p,q)$ be a braid of $p$ strands represented by
$$B_W(p,q) = \left( \sigma_1 \sigma_2^{-1} \sigma_3 \cdots \sigma_{p-1}^{(-1)^p}\right)^q$$ 
for positive integers $p$, $q$. 
A {\it weaving link} $W(p,q)$ is an alternating link of $\gcd(p,q)$ components which is defined as the closure of the braid $B_W(p,q)$, where $p \geq 3$, $q \geq 2$. 
In particular, when $\gcd(p,q)=1$, we call $W(p,q)$ a {\it weaving knot}. 
The torus link $T(p,q)$ and $W(p,q)$ share the same standard link projection (cf.~\cite{CKP}). 
Note that if $p = 2$, then the closure of $B_W(p,q)$ represents a torus link of the type $(2,q)$ for which most of the link invariants are explicitly known in the literature.
For $p = 1$ or $q = 1$, the closure of $B_W(p,q)$ always represents the trivial knot.
Throughout the paper, when we denote $B_W(p,q)$, we assume that $p$ and $q$ are positive integers. 
When we consider a weaving knot $W(p,q)$, assume that $p$ and $q$ are coprime integers with $p \geq 3$, $q \geq 2$.\par 
Invariants of weaving links have been a topic of interest in recent studies.
For instance, Champanerkar, Kofman and Purcell \cite{CKP} provide asymptotically sharp bounds of the hyperbolic volume of weaving links.
The signature of a weaving link is calculated by Mishra and Staffeldt in \cite{MS}, which coincides with Rasmussen's $s$-invariant and determines the support of the Khovanov homology.
Moreover, they also focus on the computations of polynomial invariants, the ranks of Khovanov homology groups, and higher twist numbers of the weaving link $W(p,q)$ for the case $p = 3$.
The link determinant of $W(p,q)$ for $p = 3,4$ or $q = 2$ is known (see for example \cite{KST,DMMS}).
Recently, explicit formulae of the Alexander and Jones polynomials of $W(p,q)$ for $p = 3$ are derived by AlSukaiti and Chbili in \cite{AC}.
It will be interesting to understand the complexity of a weaving knot or link by means of those invariants which are defined in a combinatorial manner using link diagrams.
Some of them are considered in the following discussions.

In this paper, we discuss upper bounds of the unknotting number $u(K)$ and the region unknotting number $u_R(K)$ of weaving knots $K$. 
The {\it unknotting number} $u(K)$ of a knot $K$ is the minimum number of crossing changes which are needed to transform $K$ into the trivial knot. 
We have the inequality $u(W(p,q)) \leq (p-1)q/2 -1$ (Corollary \ref{cor-uc-1} in Section \ref{section-kt}) from a relation between the unknotting number and crossing number. 
Moreover, for some specific types of weaving knots, we have the following inequalities. 

\begin{thm}
Let $p$ be an odd integer with $p \geq 3$, $n$ be a non-negative integer and $r$ be an integer with $1 \leq r \leq p-1$ and $\gcd(p,r)=1$.
Then 
$$u(W(p,np+r)) \leq \frac{n(p^2-1)}{4}+\frac{(p-1)r}{2}-1$$
holds.
\label{thm-42-1}
\end{thm}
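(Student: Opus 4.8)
The plan is to induct on $n$, removing one full period of $p$ blocks at each step, and to hand the base case to the already-proved Corollary~\ref{cor-uc-1}. Write $\beta=\sigma_1\sigma_2^{-1}\sigma_3\cdots\sigma_{p-1}^{-1}$ for a single block, so that $B_W(p,q)=\beta^{q}$ and $W(p,q)=\widehat{\beta^{q}}$. For $n=0$ the asserted bound is $\tfrac{(p-1)r}{2}-1$, which for $r\ge 2$ is exactly Corollary~\ref{cor-uc-1} applied to $q=r$ (and for $r=1$ the knot $W(p,1)$ is the unknot, so $u=0\le\tfrac{p-1}{2}-1$). The inductive step will be the single-period estimate
\[
u\bigl(W(p,q+p)\bigr)\ \le\ u\bigl(W(p,q)\bigr)+\frac{p^{2}-1}{4}.
\]
Iterating this $n$ times from $q=r$ and substituting the base case produces precisely $u(W(p,np+r))\le \tfrac{n(p^{2}-1)}{4}+\tfrac{(p-1)r}{2}-1$, so the single $-1$ is inherited once, from the base case.

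To prove the single-period estimate I would exhibit a set of $\tfrac{p^{2}-1}{4}$ crossings lying in the last super-period $\beta^{p}$ of $B_W(p,q+p)=\beta^{q}\cdot\beta^{p}$ whose changes convert $\beta^{p}$, via the braid relations $\sigma_i\sigma_i^{-1}=1$, $\sigma_i\sigma_j=\sigma_j\sigma_i$ $(|i-j|\ge 2)$ and $\sigma_i\sigma_{i+1}\sigma_i=\sigma_{i+1}\sigma_i\sigma_{i+1}$, into the \emph{trivial} $p$-braid. Once $\beta^{p}$ is reduced to the identity braid, the factor disappears and $\widehat{\beta^{q}\beta^{p}}$ becomes $\widehat{\beta^{q}}=W(p,q)$, which gives the displayed inequality. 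The number $\tfrac{p^{2}-1}{4}=\tfrac{p-1}{2}\cdot\tfrac{p+1}{2}$ and the location of the crossings to be switched are read off from the warping degree of the braid diagram $\beta^{p}$: one changes exactly the warping crossings of this sub-braid and then verifies, using the braid relations, that the result is trivial. A parity check supports the count, since the block exponent sum is $0$, so trivialization needs equally many $+\!\to\!-$ and $-\!\to\!+$ switches, forcing an even number of changes, and $\tfrac{p^{2}-1}{4}=\tfrac{p-1}{2}\cdot\tfrac{p+1}{2}$ is indeed always even. As a sanity check, for $p=3$ one switches the second and fifth letters of $\beta^{3}=(\sigma_1\sigma_2^{-1})^{3}$, obtaining $\sigma_1\sigma_2\sigma_1\sigma_2^{-1}\sigma_1^{-1}\sigma_2^{-1}=\sigma_2\sigma_1\sigma_2\sigma_2^{-1}\sigma_1^{-1}\sigma_2^{-1}=1$ after one braid relation and two cancellations, and indeed $\tfrac{3^{2}-1}{4}=2$.

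The step I expect to be the main obstacle is the single-period estimate itself, that is, the uniform, explicit description of the $\tfrac{p^{2}-1}{4}$ crossings that trivialize $\beta^{p}$ for every odd $p$, together with the proof that this many switches really do suffice and that the outcome is the identity braid rather than merely a braid with trivial closure. The block permutation is a $p$-cycle, so the traversal threads through all $p$ strands and the warping crossings are distributed across several of the $p$ blocks rather than confined to one, which makes both the count and the reduction delicate. I would control this by splitting the $p(p-1)$ crossings of $\beta^{p}$ into the $\tfrac{p-1}{2}$ positive and $\tfrac{p-1}{2}$ negative diagonals and counting the under-first crossings in each, or alternatively by an induction on $p$; the key point to check is that the periodic (length-$p$) structure is exactly what yields the saving $\tfrac{(p-1)^{2}}{4}$ per period relative to the naive bound $\tfrac{p(p-1)}{2}$ per period underlying Corollary~\ref{cor-uc-1}.
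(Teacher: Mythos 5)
Your outer scaffolding is the same as the paper's: decompose $B_W(p,np+r)$ into $n$ copies of $\beta^p=B_W(p,p)$ plus one $B_W(p,r)$, spend $\frac{p^2-1}{4}$ crossing changes per copy to make that copy the trivial braid, and bound the leftover by the crossing-number estimate (your induction on $n$ is just the paper's simultaneous decomposition performed one period at a time, and your base case is handled correctly). The genuine gap is exactly the step you flag as ``the main obstacle'': the single-period estimate. It hides two missing ideas, neither of which your fallback plans supply. First, the count. The set of warping crossing points of $\beta^p$ depends on the chosen order of the base points of the strands, and for the natural left-to-right order the count is \emph{not} $\frac{p^2-1}{4}$; for $p=7$ it is $22$, not $12$. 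The bound $\frac{p^2-1}{4}$ is realized only by the interleaved order (all odd-indexed strands first, then the even-indexed ones), and proving that this order works rests on the structural fact that, for odd $p$, each pair of strands of $\beta^p$ meets in exactly two crossings with the \emph{same} strand on top at both (Proposition~\ref{pp-c}, which follows from the opposite signs of $\sigma_{j-i}$ and $\sigma_{p-(j-i)}$ in the two relevant rounds). Your proposed ``split into positive and negative diagonals and count under-first crossings'' never fixes an order of traversal, so it cannot produce this count.

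Second, the trivialization. You want to verify ``using the braid relations'' that switching those crossings reduces $\beta^p$ to the identity braid, but you carry this out only for $p=3$, and a direct braid-word reduction for general odd $p$ is precisely the delicate computation for which you offer no argument; the parity check is only a necessary condition and does nothing toward sufficiency. The paper sidesteps this computation with a soft lemma (Lemma~\ref{d0-t}): a \emph{pure} braid diagram with warping degree zero is equivalent to the trivial braid, since its closure is then a descending diagram of a trivial link and the braid can be combed trivial while the closure arcs are kept fixed. Crossing changes do not alter the underlying permutation, so the modified $\beta^p$ is still pure, and having warping degree zero it is trivial as a braid --- which is what your inductive step needs, as you rightly note that trivial closure alone would not suffice. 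Supplying Lemma~\ref{pp-o} (the count via the interleaved order and Proposition~\ref{pp-c}) and Lemma~\ref{d0-t} (pure with warping degree zero implies trivial) is what your proposal requires to become a proof; as written, the inequality $u(W(p,q+p))\leq u(W(p,q))+\frac{p^2-1}{4}$ is asserted, not proved.
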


\begin{thm}
When $p$ is an odd integer with $p \geq 3$, 
$$u(W(p,np+1)) \leq \frac{n(p^2-1)}{4}$$
holds for any positive integer $n$ and
$$u(W(p,np+2)) \leq \frac{n(p^2-1)}{4} + \frac{p-1}{2}$$
holds for any non-negative integer $n$. 
When $p$ is an even integer with $p \geq 4$, 
$$u(W(p,np+1)) \leq \frac{np(p-1)}{2}$$
holds for any positive integer $n$.
\label{thm-42-2}
\end{thm}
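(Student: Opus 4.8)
The plan is to establish all three inequalities with the same device used for Theorem~\ref{thm-42-1}, namely the warping degree of the standard closed-braid diagram of $W(p,q)$. Recall that if $D$ is a diagram of a knot $K$ with base point $a$, then changing every warping crossing (a crossing whose under-strand is traversed before its over-strand, starting from $a$) converts $D$ into a descending, hence trivial, diagram, so that $u(K)\le d_a(D)$. It therefore suffices, for each family, to produce a base point for which the warping-crossing count of the diagram of $W(p,np+r)$ equals the claimed bound. Throughout I would use the factorization $B_W(p,np+r)=B_W(p,p)^{\,n}\,B_W(p,r)$, which decomposes the diagram into $n$ copies of the full period $B_W(p,p)$ followed by a single residual block $B_W(p,r)$ with $r\in\{1,2\}$.

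First I would record the per-period count. For odd $p$ the contribution of one period $B_W(p,p)$ to the warping degree, under the traversal induced by the closure, is $\tfrac{p^2-1}{4}$; this is exactly the computation that yields the leading term $\tfrac{n(p^2-1)}{4}$ of Theorem~\ref{thm-42-1}, so I would reuse it verbatim. Summing over the $n$ periods already accounts for the entire right-hand side of the first inequality and for the leading term of the second, and the whole task reduces to the residual block.

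The crux is to improve the generic residual estimate $\tfrac{(p-1)r}{2}-1$ of Theorem~\ref{thm-42-1} by $\tfrac{p-3}{2}$ when $r\in\{1,2\}$. The idea is to place the base point immediately before $B_W(p,r)$ and orient the knot so that the staircase $\sigma_1\sigma_2^{-1}\cdots$ of the block adjacent to the base point is met entirely over-first, making its warping contribution $0$ rather than $\tfrac{p-1}{2}$. For $r=1$ the whole residual is this one block, so it contributes $0$ and gives $u(W(p,np+1))\le \tfrac{n(p^2-1)}{4}$; for $r=2$ one of the two residual blocks is zeroed out while the other still contributes $\tfrac{p-1}{2}$, giving $u(W(p,np+2))\le \tfrac{n(p^2-1)}{4}+\tfrac{p-1}{2}$. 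The main obstacle I anticipate is verifying that zeroing out the base-point block does not silently raise the warping count elsewhere: rebasing and reorienting can turn crossings that were descending in the neighbouring period into warping crossings, and I would have to check crossing-by-crossing that no such penalty appears when $r\le2$. This is precisely why the clean saving $\tfrac{p-3}{2}$ fails to propagate for larger $r$ and hence is absent from Theorem~\ref{thm-42-1}.

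Finally, for even $p$ the residual $+1$ block is handled exactly as above (contribution $0$), but the period $B_W(p,p)$ behaves differently: the sign $(-1)^p$ of the last generator in each block is reversed relative to the odd case, so the warping count of a period is no longer $\tfrac{p^2-1}{4}$ and must be recomputed directly. I expect the honest count to give $\tfrac{p(p-1)}{2}$ per period, whence $u(W(p,np+1))\le \tfrac{np(p-1)}{2}$; as a consistency check this is the generic ``change about half the crossings'' estimate, in contrast to the more efficient ``about a quarter'' bound $\tfrac{n(p^2-1)}{4}$ available in the odd case, which I would verify by comparing against the $(p-1)(np+1)$ crossings of the diagram.
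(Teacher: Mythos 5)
Your strategy --- bounding $u(K)$ by the warping degree $d_a(D)$ of the closed minimal diagram for a well-chosen base point --- cannot prove the first and third inequalities, and the paper itself contains the counterevidence. The count of $\frac{p^2-1}{4}$ warping crossings per $B_W(p,p)$ block (Lemma \ref{pp-o}) requires the interleaved strand ordering $(b_1,b_3,\dots,b_p,b_2,b_4,\dots,b_{p-1})$, but on a one-component closed diagram the order in which the strands are traversed is not free: it is dictated by the closure permutation (this is exactly the ``follows the closure'' constraint behind $\overline{d}$ in Subsection \ref{section-wd-closure}). For $q=np+1$ the permutation of $B_W(p,q)$ is the $p$-cycle $i\mapsto i-1 \pmod p$, so every base point and either orientation makes the traversal visit the strands consecutively, $1,p,p-1,\dots,2$ up to rotation and reversal, never in the interleaved order; counting via Proposition \ref{pp-c} then gives $\frac{(p-1)^2}{2}$ warping crossings per block for the better of the two orientations, which strictly exceeds $\frac{p^2-1}{4}$ once $p\geq 5$. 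The Appendix makes this unavoidable: $d(W(5,5n+1))=8n+1$ and $d(W(4,4n+1))=6n+1$, and these values are already minimized over all base points and both orientations, yet the bounds you must reach are $6n$ in both cases. Hence no crossing-by-crossing verification can rescue the first or third inequality; only your second inequality is salvageable this way, because for $q=np+2$ the closure traversal with one of the two orientations does realize the interleaved order, consistent with $d(W(5,5n+2))=6n+2$ matching the bound.

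The paper's proof therefore never makes the closed diagram descending. It computes the warping degree of each block $B_W(p,p)$ as a braid, where the strand ordering is free, changes those $\frac{p^2-1}{4}$ crossings (or $\frac{p(p-1)}{2}$ for even $p$, by Lemma \ref{pp-e}), and then invokes Lemma \ref{d0-t}: a pure braid diagram of warping degree zero is equivalent to the trivial braid, so each block can be deleted from the diagram after its crossing changes. Once all $n$ blocks are gone, the diagram is the closure of $B_W(p,r)$, which is unknotted with zero further crossing changes when $r=1$ (Lemma \ref{u-r1}, Reidemeister I moves) and with $\frac{p-1}{2}$ changes when $r=2$ (Lemma \ref{u-r2}). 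This ``trivialize and delete'' step, rather than a global descending-diagram argument, is the idea missing from your proposal, and it is precisely what allows the final bound to beat the honest warping degree of the closed diagram.
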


\noindent The upper bounds given in these theorems are attained for certain weaving knots mentioned in the following remark.
For their Rolfsen's name, one may refer to Livingston and Moore~\cite{LM}, a database of knot invariants, or \cite{MS}. 
\begin{rem}
For the inequalities given in Theorems~\ref{thm-42-1} and \ref{thm-42-2}, there exist the following examples for which the equality holds.
\begin{enumerate}
\item If $p=3$ and $n=r=1$, then $W(p,np+r)=W(3,4) = 8_{18}$ for which $u(W(3,4))=2 \leq \frac{1(9-1)}{4} + \frac{(3-1)}{2} - 1 = 2$.
However, the upper bound is not attained if $p=3$, $n=1$, and $r=2$ for which $W(p,np+r)=W(3,5) = 10_{123}$, and $u(W(3,5)) = 2 \leq \frac{1(9-1)}{4} + \frac{(3-1)2}{2} - 1 = 3$.
\item If $p = 5$ and $n = 0$, then $W(p,np+2)=W(5,2) = 8_{12}$ for which $u(W(5,2)) = 2 \leq 0 + \frac{(5-1)}{2} = 2$.
\end{enumerate}
\end{rem}

\noindent For a knot diagram $D$ and a region $R$ of $D$, a {\it region crossing change on $R$} is a set of crossing changes at all the crossings on the boundary of $R$ (see \cite{ASr}). 
The {\it region unknotting number of a knot diagram $D$}, $u_R(D)$, is the minimum number of region crossing changes which are required to transform $D$ into a diagram of the trivial knot. 
The {\it region unknotting number of a knot $K$}, $u_R(K)$, is the minimum value of $u_R(D)$ for all minimal crossing diagrams $D$ of $K$. 
For weaving knots $W(p,q)$, we have the inequality $u_R(W(p,q)) \leq (p-1)q/2 + 1/2$ (Corollary \ref{cor-ur-c12} in Section \ref{section-rcc}) from a relation between the region unknotting number and the crossing number. 
For the following types of weaving knots, we have sharp upper bounds. 

\begin{thm}
When $p$ is an odd integer with $p \geq 3$, 
$$u_R(W(p,np+1)) \leq \frac{n(p^2-1)}{4}$$
holds for any positive integer $n$ and 
$$u_R(W(p,np+2)) \leq \frac{n(p^2-1)}{4}+\frac{p-1}{2}$$
holds for any non-negative integer $n$. 
\label{thm-r12}
\end{thm}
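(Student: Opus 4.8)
The plan is to run the region-crossing-change analog of the argument behind Theorem \ref{thm-42-2}. There, for the standard lattice diagram $D$ of $W(p,np+1)$ (resp. $W(p,np+2)$) with $p$ odd, one uses the warping degree of the braid diagram to single out a collection of crossings whose change turns $D$ into a descending, hence trivial, diagram, and the size of that collection is exactly $n(p^2-1)/4$ (resp. $n(p^2-1)/4+(p-1)/2$). For the region version I would instead exhibit a family of regions of $D$ such that the corresponding region crossing changes again produce a descending diagram, and I would arrange that the number of regions used is the same quantity. Since region crossing change is known to be an unknotting operation for knots by \cite{ASr}, feasibility is not in question; the entire content is the \emph{count}, and the natural device for controlling it is the same warping-degree bookkeeping of Section \ref{section-kt}.

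The mechanism I would use rests on the lattice structure. A region crossing change on an inner square of the weaving diagram toggles precisely the four crossings on its boundary, and two grid-adjacent regions share exactly two crossings; hence along any connected block of chosen regions the shared (interior) crossings are toggled an even number of times and cancel in $\mathbb{F}_2$, so that only a controlled boundary set of crossings is actually changed. Fixing the checkerboard coloring and working modulo the two global relations for a knot diagram (changing all black, resp.\ all white, regions acts trivially), I would select a staircase-shaped family of regions whose net effect un-weaves the diagram diagonal by diagonal and leaves it descending. The point is that I do \emph{not} try to reproduce the exact crossing-change vector from the proof of Theorem \ref{thm-42-2} (the region moves necessarily alter crossings in groups of up to four); I only require that the final diagram be descending, and I count regions so that the total equals the stated bound.

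I would then split into the two residue cases $q=np+1$ and $q=np+2$ and treat the braid closure separately, since the outermost regions of the closed braid wrap around and are incident to fewer crossings than the inner squares; this is also the point where the hypothesis that $p$ is odd is used, guaranteeing that the flagged crossings pair with regions in a parity-consistent way. (This is exactly why the even-$p$ statement present in Theorem \ref{thm-42-2} is omitted from Theorem \ref{thm-r12}.) The main obstacle I anticipate is precisely this cancellation-and-counting step: one must verify both that the interior toggles cancel so that the chosen regions really yield a descending diagram, and that the number of regions used does not exceed $n(p^2-1)/4$ (resp.\ $n(p^2-1)/4+(p-1)/2$), which forces an honest case analysis of the region incidences near the top and bottom of the braid and around the closure arcs. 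Once this counting lemma is in place, the two inequalities of Theorem \ref{thm-r12} follow at once, and they coincide numerically with the unknotting bounds of Theorem \ref{thm-42-2}.
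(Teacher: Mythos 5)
Your overall strategy is the right one and matches the paper's in spirit: exhibit an explicit family of regions whose region crossing changes do the work that the crossing changes did in Theorem \ref{thm-42-2}, and arrange the count to equal the warping-degree bound. However, your proposal stops exactly where the actual proof begins. The entire mathematical content of the theorem is the ``cancellation-and-counting step'' that you yourself flag as the main obstacle and then defer: one must produce a concrete set of regions, verify that its net effect (over $\mathbb{F}_2$) is precisely the set of warping crossing points, and verify that the number of regions used is \emph{exactly} $(p^2-1)/4$ per block. This last point is not a routine bookkeeping matter: each region toggles up to four crossings and adjacent regions interfere, so there is no a priori reason the number of regions needed should equal the number of crossings to be switched; achieving a one-to-one count is the delicate part. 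In the paper this is done in Lemma \ref{prop-drpp}, which writes down explicit diagonal families $R_1$ (for $p\equiv 1 \pmod 4$) and $R_2$ (for $p\equiv 3 \pmod 4$) of regions $r^i_j$ inside each $B_W(p,p)$ block and checks against the diagram that they switch exactly the warping crossing points of $B_W(p,p)_{\mathbf{b}}$ for the sequence $\mathbf{b}=(b_1,b_3,\dots,b_p,b_2,\dots,b_{p-1})$; your ``staircase-shaped family'' is the correct intuition for what $R_1$ and $R_2$ look like, but without the explicit selection and verification there is no proof.

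A second, structural difference: the paper does not work globally on the closed diagram of $W(p,np+r)$ as you propose. It decomposes the braid into $n$ copies of $B_W(p,p)$ plus one $B_W(p,r)$, exactly as in the proof of Theorem \ref{thm-42-1}, and handles each piece by a separate lemma --- Lemma \ref{prop-drpp} together with Lemma \ref{d0-t} trivializes each pure-braid block, Lemma \ref{u-r1} disposes of the $r=1$ tail with zero moves, and Lemma \ref{urb-p2} gives the $(p-1)/2$ regions for the $r=2$ tail (producing a braid whose closure reduces to the unknot by Reidemeister moves, not a descending diagram). This block-by-block scheme keeps all chosen regions interior to their own block, so there are no interactions across the $np+r$ rounds; your global ``un-weave diagonal by diagonal'' plan would have to control such interactions over the whole closure, including the wrap-around regions you correctly note behave differently, and that case analysis is never carried out. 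As it stands, the proposal is a plausible plan whose critical lemma is assumed rather than proven.
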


\begin{rem}
For the inequalities given in Theorem~\ref{thm-r12}, there exist the following weaving knots (see~\cite{ASr}) for which the equality holds.
\begin{enumerate}
\item If $p=3$ and $n=1$, then $W(p,np+1)=W(3,4) = 8_{18}$ for which $u_R(W(3,4))=2 \leq \frac{1(9-1)}{4} = 2$.
\item If $p=3$ and $n=0$, then $W(p,np+2)=W(3,2) = 4_1$ for which $u_R(W(3,2)) = 1 \leq 0 + \frac{(3-1)}{2} = 1$.
\end{enumerate}
\end{rem}

\noindent In the proofs of Theorems \ref{thm-42-1}, \ref{thm-42-2} and \ref{thm-r12}, we use the ``warping degree'' which represents a complexity of a knot diagram, effectively taking advantage of the regular lattice-like pattern of weaving knots. 
The rest of the paper is organized as follows: 
In Section \ref{section-preliminaries}, we see some basic and useful properties of weaving knots. 
In Section \ref{section-wd}, we study the warping degree. 
We also define the warping degree for braid diagrams. 
In Section \ref{section-pf}, Theorems \ref{thm-42-1} and \ref{thm-42-2} are shown. 
In Section \ref{section-rcc}, we discuss the region unknotting numbers and prove Theorem \ref{thm-r12}. 
In Appendix, we investigate the warping degree of weaving knot diagrams by considering the isolate-region numbers.

\section{Preliminaries on weaving knots} 
\label{section-preliminaries}

In this section, we see some properties of weaving knots which are used in this paper. 

\subsection{Relation between two strands in $B_W(p,p)$} 

\noindent For the canonical braid diagram of $B_W(p,p)$, we have the following proposition. 

\begin{prop}
On the canonical diagram of $B_W(p,p)$, each pair of strands have exactly two mutual crossings. 
Moreover, on the two crossings, when $p$ is an odd number, one of the two strands is over at both crossings. 
When $p$ is an even number, each of the two strands has one over-crossing and one under-crossing between them.
\label{pp-c}
\end{prop}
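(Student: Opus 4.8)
The plan is to track the $p$ strands of the braid diagram $B_W(p,p)$ via the permutation induced by a single syllable $w = \sigma_1 \sigma_2^{-1} \cdots \sigma_{p-1}^{(-1)^p}$, and then to iterate this $p$ times. First I would analyze one copy of $w$: reading the generators from top to bottom, I claim the strand entering the row at the leftmost position $1$ moves steadily rightward, crossing the strand at position $2$, then the new occupant of position $3$, and so on, ending at position $p$, while every other strand shifts one position to the left. Concretely, after columns $1,\ldots,i-1$ have been processed this ``traveling'' strand sits at position $i$, so the generator at column $i$ makes it cross exactly the strand that occupied position $i+1$ at the top of the row. Thus in a single row one distinguished strand crosses each of the other $p-1$ strands exactly once, and $w$ realizes the cyclic shift $j \mapsto j-1 \pmod p$ (position $1$ wrapping to $p$). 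Indexing positions by $\mathbb{Z}/p$, after $k-1$ rows the strand starting at position $j$ sits at position $j-(k-1) \pmod p$, so each strand is the traveling strand in exactly one of the $p$ rows.

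Next I would count the mutual crossings of a fixed pair. Strands $a$ and $b$ can only cross in the unique row where $a$ travels and the unique row where $b$ travels; since these rows are distinct, the pair meets exactly twice, which gives the first assertion (and matches the total crossing count $p(p-1) = 2\binom{p}{2}$). Working in $\mathbb{Z}/p$, when $a$ travels it crosses $b$ at the column $i_1 \equiv b-a \pmod p$, and when $b$ travels it crosses $a$ at the column $i_2 \equiv a-b \pmod p$; hence $i_1, i_2 \in \{1,\ldots,p-1\}$ with $i_1 + i_2 = p$.

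Finally, for the over/under statement I would use that the traveling strand always occupies the lower-indexed position $i$ at its column-$i$ crossing. With the convention that the strand at position $i$ passes over in $\sigma_i$ and under in $\sigma_i^{-1}$, the sign $(-1)^{i+1}$ of the generator at column $i$ makes the traveler pass over precisely when $i$ is odd. At the two crossings of the pair this says: $a$ is over at its own crossing iff $i_1$ is odd, and $a$ is over at the other crossing iff $i_2$ is even (there $b$ is the traveler). When $p$ is odd, $i_1 + i_2 = p$ forces $i_1$ and $i_2$ to have opposite parities, so $a$ is either over at both crossings or under at both, i.e. one of the two strands lies over at both; when $p$ is even, $i_1$ and $i_2$ share a parity, so $a$ is over at exactly one of the two crossings, which is the alternating pattern. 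I expect the main obstacle to be purely a matter of bookkeeping: fixing a consistent orientation and over/under convention and verifying carefully that the traveling strand really occupies the lower position at each of its crossings, after which the parity computation above is immediate.
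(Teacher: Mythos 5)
Your proposal is correct and follows essentially the same route as the paper's proof: both decompose $B_W(p,p)$ into $p$ rounds with one ``traveling'' strand per round, locate the two mutual crossings of a pair at generator columns summing to $p$ (the paper's $(j-i)$th crossing in round $i$ and $(p-(j-i))$th crossing in round $j$), and conclude by the parity argument on the alternating signs of the generators. Your $\mathbb{Z}/p$ position bookkeeping merely makes explicit the tracking that the paper states more tersely, so there is no substantive difference.
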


\begin{proof}
For the braid $B_W(p,p)= \left( B_W(p,1) \right)^p$, we count each $B_W(p,1)$ as ``one round''. 
We call the $i$th strand from the left-hand side at the top of the braid the ``$i$th strand''. 
Let $1 \leq i < j \leq p$, and let $s_i$, $s_j$ be the $i$th, $j$th strands of $B_W(p,p)$, respectively. 
The strands $s_i$ and $s_j$ meet at the ($j-i$)th crossing in the $i$th round once, and the ($i+p-j$)th crossing in the $j$th round. 
Note that other than the $i$ and $j$th rounds, they have no crossings. 
When $p$ is odd, the signs of $\sigma_{j-i}$ in the $i$th round and $\sigma_{p-(j-i)}$ in the $j$th round in $B_W(p,p)$ are opposite because they have different parities of the subscripts. 
This implies that at one crossing, the arc heading in the right direction is over, and at the other crossing the arc heading in the left is over. 
Hence one of $s_i$ and $s_j$ is over at both crossings. 
We note that in each $k$th round, only $k$th strand is heading in the right direction. 
When $p$ is even, the signs of $\sigma_{j-i}$ and $\sigma_{p-(j-i)}$ are same, and one strand is over at one crossing and the other strand is over at the other crossing. 
\end{proof}

\subsection{Minimal diagram} 

In this subsection, we show that each weaving knot has a unique minimal diagram on $S^2$. 
The following theorem was proved by Menasco in \cite{WM}. 

\begin{thm}[\cite{WM}]\label{wm}
Let $D$ be a reduced alternating diagram of an alternating link $L$.
\begin{description}
\item[(a)] If $D$ is connected, then $L$ is non-split.
\item[(b)] If $L$ is non-split, then $L$ is prime if for each circle $C$ on $S^2$ that intersects exactly two points on edges of $D$ transversely has no crossings in one side of $C$.
\end{description}  
\end{thm}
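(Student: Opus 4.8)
The plan is to follow Menasco's incompressible-surface technique, analyzing how an essential sphere in the link complement must intersect the standard ``bubble'' decomposition of $S^3$ associated to the alternating diagram $D$. Place the projection sphere $S^2$ in $S^3$ so that $D$ lies on $S^2$ away from its crossings, and at each crossing insert a small ball (a \emph{bubble}) so that the over-strand runs across the upper hemisphere and the under-strand across the lower hemisphere. Then $S^2$ separates $S^3$ into two balls $B_+$ and $B_-$, and $L$ sits on $S^2$ except inside the bubbles. For part (a), a splitting sphere for $L$ is an essential $2$-sphere $F$ disjoint from $L$; for part (b), a decomposing sphere is an essential $F$ meeting $L$ transversely in two points with knotting on both sides. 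In each case I would assume such an $F$ exists and drive toward a contradiction with the diagram hypotheses.

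The technical heart, which I expect to be the main obstacle, is putting $F$ into \emph{standard position} with respect to the bubbles. After isotopy, $F$ should meet $S^2$ in finitely many simple closed curves, meet each bubble in finitely many \emph{saddle} disks straddling the crossing, and have $F \cap B_\pm$ consisting of disks only. Establishing this normal form requires using irreducibility of $S^3$ together with incompressibility of $F$ to eliminate intersection curves bounding disks on $S^2$ that miss the bubbles, to remove trivial bubble intersections, and to cut away any non-disk pieces of $F$ inside $B_+$ and $B_-$, all via innermost-disk and outermost-arc surgeries while keeping control of the total number of saddles.

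With $F$ in standard position, I would examine the curves of $F \cap S^2$ on the sphere decorated by $D$, choosing one that is \emph{innermost} in an appropriate sense (bounding a disk $E \subset S^2$ whose interior is disjoint from the other intersection curves, and meeting the fewest bubbles). For part (a), connectedness of $D$ forces such a minimal curve to bound a disk $E$ meeting no bubble and containing all of $L$ on one side, so $F$ bounds a ball disjoint from $L$ and cannot split $L$ --- a contradiction. For part (b), the alternating condition forces each curve crossing a bubble to do so with an over/under incompatibility, and the saddle structure together with an innermost-curve analysis produces a circle $C$ on $S^2$ meeting $D$ transversely in exactly two points with crossings on \emph{both} sides, unless $F$ is inessential; this is precisely the configuration excluded by hypothesis, so $L$ must be prime.

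Finally I would check that the \emph{reducedness} hypothesis is exactly what is needed to keep the innermost-curve analysis from stalling: the absence of nugatory crossings prevents a saddle from cutting off a trivial bigon that could be removed without reducing complexity, so the inductive descent on the number of saddles and intersection curves terminates in the intended contradiction. Thus the alternating hypothesis is used twice --- to force the saddle crossing pattern and to rule out a separating curve with no crossings on either side --- while reducedness guarantees that the reduction process actually makes progress.
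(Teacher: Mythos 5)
A point of context first: the paper does not prove this statement at all. It is Menasco's theorem, imported verbatim with the citation \cite{WM} and used as a black box to show that weaving links are non-split and prime, so there is no proof in the paper to compare yours against. The relevant comparison is with Menasco's original argument, and your outline does track it faithfully: the bubble decomposition of the projection sphere, standard position for an essential sphere (disks in the two balls $B_\pm$, saddles in the bubbles), and an innermost-curve analysis of $F \cap S^2$.

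The problem is that what you have written is a road map, not a proof, and the steps you defer are not routine details --- they are the entire mathematical content of Menasco's paper. Concretely: (i) the existence of standard position (every curve of $F \cap S^2$ meets at least one bubble, $F$ meets each bubble only in saddle disks, and $F \cap B_\pm$ consists of disks) is flagged as ``the technical heart'' and then asserted with ``I would,'' never established; (ii) the step where the alternating hypothesis actually does its work --- the combinatorial contradiction forced on a curve of $F \cap S^2$ that runs through bubbles of an alternating diagram --- is described only as ``an over/under incompatibility,'' with no mechanism given; and (iii) in part (b), producing the circle $C$ that meets $D$ in exactly two edge points with crossings on both sides is precisely the conclusion needed, yet in your text it is something the analysis ``produces'' without derivation. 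There is also a logical slip in part (a): a splitting sphere has components of $L$ on both sides by definition, so the contradiction must be that the innermost-curve analysis forces all of $L$ to one side of $F$; as written (``$F$ bounds a ball disjoint from $L$ and cannot split $L$ --- a contradiction'') the argument assumes what it is trying to prove. Finally, your closing claim about reducedness --- that it prevents a saddle from cutting off a removable bigon and so makes the descent terminate --- is speculation rather than an identified step of the argument. The approach is the right one, since it is the approach of the cited source, but none of the mathematics that makes it work is present in the proposal.
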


\noindent For weaving links, we have the following. 

\begin{prop}
Every weaving link $W(p,q)$ is non-split and prime. 
\end{prop}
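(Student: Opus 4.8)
The plan is to apply Menasco's theorem stated just above, so the first task is to produce a diagram of $W(p,q)$ to which it applies. I take $D$ to be the canonical closed-braid diagram, that is, the closure of $B_W(p,q)$. Since the generators alternate in sign along the word $\sigma_1\sigma_2^{-1}\sigma_3\cdots$, while traversing any strand one meets over- and under-crossings alternately, so $D$ is alternating. Because every generator $\sigma_1,\dots,\sigma_{p-1}$ occurs in $B_W(p,q)$, the strands are all tied together and the underlying projection is connected; and for $p\geq 3$, $q\geq 2$ no crossing is nugatory (the lattice pattern never degenerates to a single isolated twist), so $D$ is a connected reduced alternating diagram. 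Part (a) of Menasco's theorem then yields immediately that $W(p,q)$ is non-split.

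It remains to prove primeness via part (b): I must show that every circle $C\subset S^2$ meeting $D$ transversely in exactly two points, both lying on edges of $D$, has no crossing on at least one of its two sides. I would argue by contradiction, assuming some such $C$ has crossings in both complementary disks $\Delta_1$ and $\Delta_2$. The two intersection points mean exactly one strand of $D$ passes through $C$ into each disk, so $C$ would exhibit $D$ as a nontrivial connected sum, with a genuine tangle (at least one crossing) on each side.

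The heart of the argument, and the step I expect to be the main obstacle, is to rule this out using the regular lattice structure of the weaving pattern. The decisive feature is that the strands are maximally interleaved: on each round every pair of strands crosses, and in the square case $q=p$ exactly twice with a definite over/under pattern, as recorded in Proposition \ref{pp-c}, and the same densely woven structure persists for general $q$. The idea is that any two strands which $C$ tries to separate must nonetheless cross one another somewhere inside the braid, so an arc of $D\cap\Delta_1$ carrying a crossing forces the two strands meeting at that crossing to re-enter $\Delta_2$, whence $C$ would have to meet $D$ in more than two points, a contradiction. Converting this interleaving heuristic into a clean combinatorial statement, for instance by showing that the $4$-valent graph underlying $D$ admits no $2$-edge separation isolating a crossing on each side, is the delicate part; once it is in hand, part (b) of Menasco's theorem gives that $W(p,q)$ is prime, completing the proof.
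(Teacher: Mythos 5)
Your setup coincides with the paper's --- take $D$ to be the closure of $B_W(p,q)$, check it is a connected reduced alternating diagram, and invoke Menasco's theorem --- and your part (a) is fine. The problem is part (b): you state correctly what must be shown (every circle meeting $D$ transversely in two edge points has a crossing-free side), but you then defer its proof, calling it ``the delicate part'' still to be converted ``into a clean combinatorial statement.'' That statement is the entire mathematical content of primeness here, so the proposal has a genuine gap. Moreover, the heuristic you offer in its place does not work as stated: when $C$ meets $D$ in only two points, a crossing inside one of the disks is typically a \emph{self}-crossing of the single sub-arc of the link lying in that disk, and nothing forces ``the two strands meeting at that crossing to re-enter'' the other disk. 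Indeed, in a standard diagram of a connected sum of two trefoils there is a circle with two intersection points and crossings on both sides, and every interior crossing is a self-crossing of the arc on its side; so no argument of this generic ``strands must cross again'' form can succeed, and one must use something specific to the weaving diagram.

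The missing ingredient, as the paper supplies it, is a short region-combinatorics check rather than an interleaving argument. The diagram $D$ has exactly two $q$-gonal regions, $2q$ triangular regions, and all remaining regions are quadrilaterals; inspecting adjacencies (each $q$-gon abuts $q$ distinct 3-gons; each 3-gon abuts one $q$-gon and two distinct 4-gons, or 3-gons when $p=3$; each 4-gon abuts four distinct regions) shows that no two regions of $D$ share two distinct edges. Now if $C$ meets $D$ in exactly two points on edges, its two complementary arcs each lie in a single region, so the two regions on the two sides of $C$ would share every edge containing an intersection point; since no two regions share two distinct edges, both points lie on one and the same edge, and then one side of $C$ contains no crossing. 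This is exactly the ``no $2$-edge separation'' statement you anticipated needing; without proving it (or the equivalent adjacency facts above), your argument does not establish primeness.
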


\begin{proof}
Let $D$ be the closure of the braid diagram $B_W(p,q)$.
\begin{description}
\item[(a)] Since the reduced alternating diagram $D$ is connected, $W(p,q)$ is a non-split link by Theorem~\ref{wm} (a).
\item[(b)] The diagram $D$ has two $q$-gons and $2q$ of 3-gons, and the others are 4-gons. 
The adjacent regions of each $q$-gon are $q$ distinct 3-gons. 
The adjacent regions of each 3-gon are two distinct 4-gons (3-gons when $p=3$) and a $q$-gon. 
The adjacent regions of each 4-gon are 4 distinct 3-gons or 4-gons. 
Hence, any pair of regions does not share two distinct edges. 
Therefore, any circle $C$ which intersects two points on $D$ has the intersection points on the same edge of $D$, and in one side of $C$, there are no crossings.
Thus, by applying Theorem~\ref{wm} (b), we have that $W(p,q)$ is prime. 
\end{description}
\end{proof}

\noindent It is known that any reduced alternating diagram of an alternating knot is a minimal diagram and any non-alternating diagram of a prime alternating knot cannot be minimal (the Tait's first conjecture, proved independently by Kauffman~\cite{LK}, Murasugi~\cite{KM}, and  Thistlethwaite~\cite{Th}). 
It is also known that any pair of reduced alternating diagram of an alternating knot are related by a finite sequence of flypings (the Tait flyping conjecture, proved in \cite{MTa, MTc} by Menasco and Thistlethwaite), where a {\it flyping} is a transformation on knot diagrams shown in Figure~\ref{fig-flyping}. 
We say a flyping on a knot diagram is {\it non-trivial} if the resulting knot diagram is different from the original one. 
The contraposition of the following proposition is useful. 
\begin{figure}[ht]
\centering
\includegraphics[width=6cm]{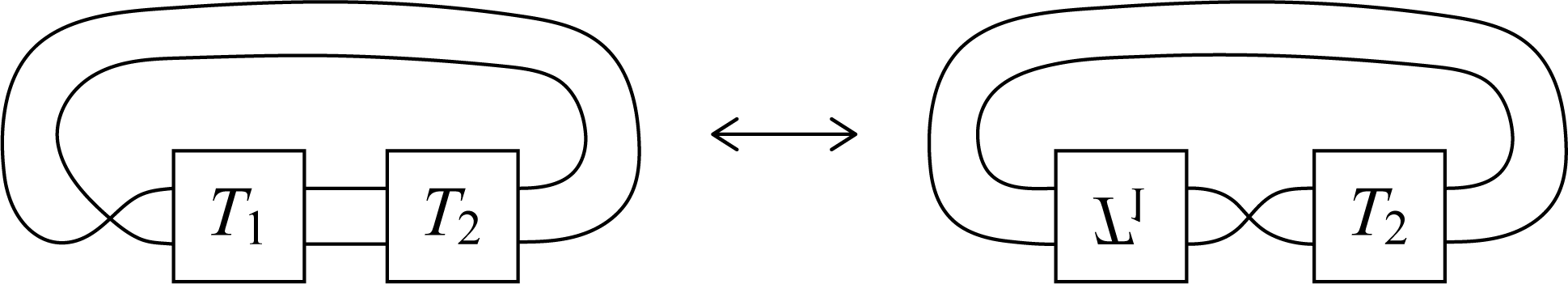}
\caption{A flype on an alternating diagram. Assume that the two crossings have visually same over/under information.}
\label{fig-flyping}
\end{figure}

\begin{prop}
If a link diagram $D$ admits a non-trivial flyping, there exists a circle $C$ which intersects a crossing exactly once and edges exactly twice transversely, and has crossings in both sides. 
\label{prop-flype}
\end{prop}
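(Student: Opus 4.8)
The plan is to read the required circle directly off the local picture of the flype in Figure~\ref{fig-flyping}. First I would fix the data of the move: a non-trivial flyping acts on a part of $D$ consisting of a $2$-string tangle $T$ in a disk together with one adjacent crossing $c$, where two of the four endpoints of $T$ run into $c$, while the remaining two endpoints of $T$ and the remaining two endpoints of $c$ connect to the rest of the diagram, which I denote $T'$. The flyping rotates $T$ by a half-turn and moves $c$ to the opposite side of $T$.

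Next I would build the curve $C$. I route $C$ through the double point of $c$ and then around $T$. At $c$ the four edge-ends split into the two that enter $T$ and the two that leave towards $T'$; since opposite ends at a crossing belong to one strand, the two $T$-ends are adjacent in the cyclic order, so a single arc through $c$ separates them from the two $T'$-ends. Closing this arc around $T$, the only edges it meets are the two strands joining $T$ directly to $T'$. Hence $C$ is a simple closed curve meeting $D$ transversely in exactly one crossing, namely $c$, and in exactly two edges, and it bounds a disk containing $T$ on one side and a disk containing $T'$ together with the two outer ends of $c$ on the other.

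It remains to see that crossings occur on both sides of $C$. The crossings inside $C$ are exactly those of $T$, and those outside are exactly those of $T'$, since $c$ lies on $C$ and is charged to neither side. For the inside I argue by contraposition: a crossingless $2$-tangle has two non-crossing matchings of its endpoints; the matching that pairs the two endpoints running into $c$ makes $c$ nugatory and is excluded in a reduced diagram, while the other is fixed by the flyping's half-turn, so the move would only slide $c$ along two parallel arcs and leave $D$ unchanged, contradicting non-triviality. Hence $T$ carries a crossing.

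The main obstacle is the outside, namely showing that $T'$ also carries a crossing. The difficulty is that the flyping rotates only $T$ and leaves $T'$ in place, so the contraposition used for the inside gives no information: a crossingless $T'$ is not moved by the half-turn at all. I would instead use that $D$ is reduced and prime. On the outside of $C$ there are four edge-ends, the two outer ends of $c$ and the two edges crossed by $C$; if $T'$ were crossingless these four ends would be joined inside the outer disk by a non-crossing matching of two arcs. Following these arcs then produces either a nugatory crossing at $c$, when an arc joins the two outer ends of $c$, or a circle meeting $D$ in just two edges with crossings on both sides, when the arcs pair a $c$-end with an edge-end; the first contradicts reducedness and the second contradicts primeness. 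Checking both matchings is the delicate step, after which both disks bounded by $C$ contain a crossing and the proposition follows.
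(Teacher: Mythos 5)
Your construction is the same as the paper's: the paper's entire proof of Proposition~\ref{prop-flype} is Figure~\ref{fig-flyping2}, which exhibits exactly the circle you build, passing through the flyped crossing $c$ and around the tangle $T$. Where you go beyond the paper is in trying to justify that both sides of that circle contain crossings, and that is where your argument has a genuine error.

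The problem is the second sub-case of your ``outside'' step. You claim that if $T'$ is crossingless and its two arcs each join an outer end of $c$ to one of the edges crossed by $C$, then $D$ contains a circle meeting just two edges with crossings on both sides, contradicting primeness. This implication is false. In that configuration $T$ and $c$ are joined by four edge-paths (two direct edges and two passing through the arcs of $T'$), so a circle meeting $D$ in only two edge points can never separate $T$ from $c$; since the underlying $4$-valent graph minus at most two edges stays connected (in a prime diagram there are no $2$-edge cuts at all), every such circle has all crossings on one side. Indeed, the standard trefoil diagram has exactly this ``necklace'' form --- $T$ a two-crossing twist tangle, $c$ the third crossing, $T'$ two crossingless closure arcs --- and it is reduced and prime, so no contradiction with primeness is available. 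This sub-case is precisely the dangerous one: if a non-trivial flype occurred with such a crossingless $T'$, then the flype circle itself would have no crossings on its outer side, and the proposition would require producing an entirely different circle (for instance one through a crossing of $T$ enclosing $c$), which your proof does not attempt. To close the gap you would have to either prove that a flype whose complementary tangle $T'$ is crossingless of this type is necessarily trivial, or construct such an alternative circle; asserting a contradiction with primeness does neither. A secondary issue: you lean on reducedness and primeness of $D$ throughout (also in your ``inside'' step and the nugatory sub-case), but Proposition~\ref{prop-flype} is stated for an arbitrary link diagram, so these are hypotheses you have imported rather than been given; the paper's figure-proof is equally silent about these degenerate configurations, but your write-up converts that silence into appeals to assumptions that are not in the statement.
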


\begin{proof}
See Figure \ref{fig-flyping2}. 
\begin{figure}[ht]
\centering
\includegraphics[width=2.5cm]{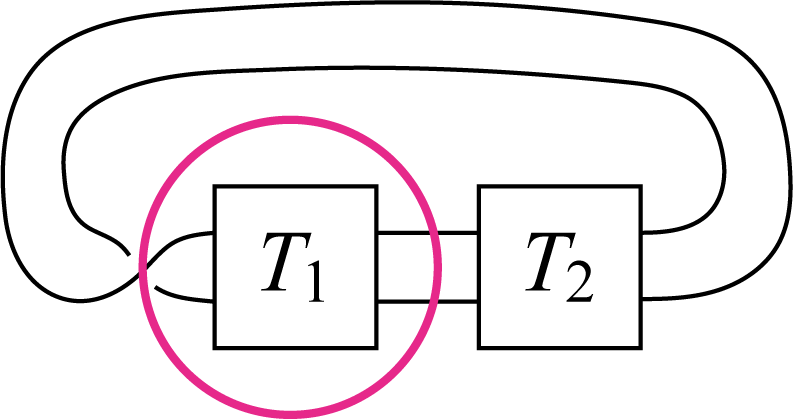}
\caption{Flyping can be applied at the circle. }
\label{fig-flyping2}
\end{figure}
\end{proof}

\noindent We remark that the converse of Proposition \ref{prop-flype} is not true. 
See, for example, Figure \ref{fig-flyping3}\footnote{
It is known that every alternating knot with Conway's notation $mn$ has only one minimal diagram (see, for example, \cite{calvo, ASr}).}. 
For weaving knots, we have the following. 
\begin{figure}[ht]
\centering
\includegraphics[width=2.5cm]{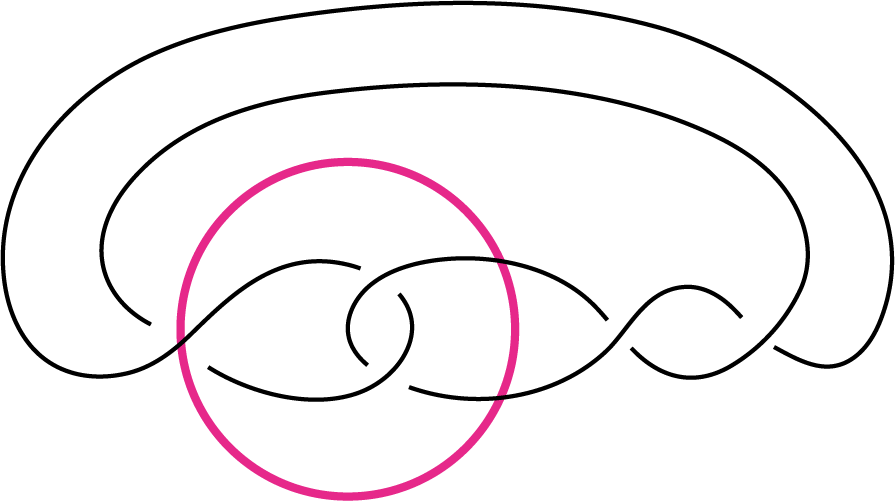}
\caption{Trivial flyping at the circle.}
\label{fig-flyping3}
\end{figure}

\begin{prop}
Each weaving knot $W(p,q)$ has only one minimal diagram on $S^2$. 
\label{umd}
\end{prop}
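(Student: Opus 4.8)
The plan is to combine the two Tait conjectures recalled above with the contraposition of Proposition~\ref{prop-flype}. Since $\gcd(p,q)=1$, the weaving link $W(p,q)$ is a prime alternating knot by the preceding proposition, so Tait's first conjecture guarantees that every minimal diagram of $W(p,q)$ is reduced and alternating, while the Tait flyping conjecture guarantees that any two such diagrams are related by a finite sequence of flypings. Let $D_0$ be the canonical diagram, the closure of $B_W(p,q)$. I claim it suffices to prove that $D_0$ admits no non-trivial flyping. Indeed, let $D'$ be any reduced alternating diagram of $W(p,q)$ and choose a shortest sequence of flypings $D_0=E_0,E_1,\dots,E_m=D'$. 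If $m\geq 1$, the first flyping is applied to $D_0$ and is therefore trivial, so $E_1=E_0=D_0$ and the sequence can be shortened, a contradiction; hence $m=0$ and $D'=D_0$. This yields uniqueness of the minimal diagram on $S^2$.

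To prove that $D_0$ admits no non-trivial flyping I would invoke the contraposition of Proposition~\ref{prop-flype}: it is enough to check that $D_0$ carries no circle $C$ meeting the diagram transversely at exactly one crossing and exactly two edge points while having crossings on both sides. Suppose, for contradiction, that such a $C$ exists, passing through a crossing $c$ and through edge points $x_1,x_2$. These three points cut $C$ into three arcs, and each arc lies in the interior of a single region of $D_0$. Because $C$ crosses $c$ transversely, the two arcs incident to $c$ lie in the two regions that are opposite at $c$, meeting only at that point; call them $F$ and $F'$. The third arc then lies in a region $G$ which shares with $F$ the edge containing $x_1$ and with $F'$ the edge containing $x_2$.

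It remains to exclude this configuration using the region data established in the primeness argument: $D_0$ has exactly two $q$-gons and $2q$ triangles, every other region is a $4$-gon, the adjacencies among these types are tightly constrained, and no two regions share two distinct edges. Enumerating the possibilities for the region types of the triple $(F,G,F')$ surrounding $c$, I would argue that in each case one of the two disks bounded by $C$ meets $D_0$ only in arcs running from boundary to boundary without crossings, so that one side of $C$ is crossing-free. This contradicts the hypothesis on $C$, showing that no such circle exists and hence that $D_0$ admits no non-trivial flyping.

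The main obstacle is precisely this case analysis. The difficulty is to show that the rigid lattice-like adjacency of a weaving diagram never allows a region $G$ to be edge-adjacent to both regions opposite at a crossing while each side of $C$ still encloses a nonempty tangle. I expect the cleanest route is to use the ``no two regions share two distinct edges'' property to pin down $G$ together with the edges through $x_1$ and $x_2$, and then to read off from the explicit over/under pattern of $B_W(p,q)$ in Proposition~\ref{pp-c} that the disk cut off on one side retracts to a single strand, making the putative flype trivial.
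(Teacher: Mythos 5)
Your overall strategy is the same as the paper's: reduce uniqueness to the statement that the canonical closed-braid diagram $D_0$ admits no non-trivial flype (via the two Tait conjectures), and then rule out flypes using the contraposition of Proposition~\ref{prop-flype}. Your reduction step and your setup of the circle $C$ through a crossing $c$ and two edge points, with arcs lying in the diagonally opposite regions $F,F'$ and a third region $G$ adjacent to both, are correct and match the paper. However, there is a genuine gap at exactly the point you flag as ``the main obstacle'': you never establish the combinatorial fact that makes the argument close. The paper's proof hinges on one precise claim about the weaving diagram, namely that for any pair of regions of $D_0$ sharing a crossing $c$ diagonally, the \emph{only} regions edge-adjacent to both are the other two regions at $c$. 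Granting this, your region $G$ must itself be one of the two remaining regions at $c$, so $C$ bounds a disk containing only arcs of the two strands through $c$ and no crossings, contradicting the hypothesis on $C$. You instead announce an enumeration of region types for $(F,G,F')$ and state that you ``would argue'' and ``expect'' each case to force one side of $C$ to be crossing-free; this is a plan, not a proof, and the entire content of the proposition specific to weaving knots lives in that unperformed verification.

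Two further remarks. First, the property you propose to lean on, that no two regions of $D_0$ share two distinct edges (from the primeness argument), is weaker than what is needed: it rules out $G=F$ or $G=F'$ type degeneracies but does not by itself restrict the common neighbors of a diagonal pair, which is the relevant statement. The adjacency claim should instead be checked directly from the lattice structure: interior regions are $4$-gons whose only common neighbors with the diagonally opposite $4$-gon are the two side regions at $c$, and the boundary cases involving the $3$-gons and the two $q$-gons are handled by the adjacency lists already recorded in the primeness proof. Second, your suggestion to invoke the over/under pattern of Proposition~\ref{pp-c} is a red herring: whether a flype (trivial or not) exists is a property of the underlying projection and its region structure, and the crossing signs play no role in the paper's argument.
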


\begin{proof}
Let $D$ be the closure of the braid diagram $B_W(p,q)$, which is a reduced alternating diagram, namely, a minimal diagram of a weaving knot $W(p,q)$. 
On $D$, for any pair of regions of $D$ which share a crossing $c$ diagonally, only the other two regions sharing $c$ are the common adjacent regions. 
Then, any circle which intersects $D$ at a crossing $c$ only once and edges only twice has no crossings in one side. 
Therefore, by Proposition \ref{prop-flype} we cannot apply a non-trivial flyping on $D$ to obtain another minimal diagram. 
\end{proof}

\subsection{Unknotting number and crossing number} 
\label{section-kt}

In this subsection, we see the upper bounds for the unknotting number of weaving knots and links which immediately follow Taniyama's theorems. 
Let $c(D)$ denote the crossing number, the number of crossings, of a link diagram $D$.
Let $u(D)$ denote the minimum number of crossing changes which are needed to transform the link diagram $D$ into a diagram of the trivial knot or link. 
For any knot diagram $D$, the inequality $u(D) \leq \frac{c(D)-1}{2}$ is well known. 
Let $c(K)$ be the crossing number of $K$, the minimum value of $c(D)$ over all diagrams $D$ of $K$. 
For a non-trivial knot $K$, the inequality $u(K) \leq \frac{c(K)-1}{2}$ follows immediately.  
Moreover, the following theorem was shown by Taniyama in \cite{kt}. 

\begin{thm}[\cite{kt}] 
We have the following. 
\begin{itemize} 
\item[(1)] Let $D$ be a knot diagram that satisfies $u(D)=\frac{c(D)-1}{2}$. Then, $D$ is a reduced alternating diagram of some $(2,q)$-torus knot, or $D$ is a diagram with just one crossing. 
\item[(2)] Let $K$ be a knot that satisfies $u(K)= \frac{c(K)-1}{2}$.
Then, $K$ is a $(2,q)$-torus knot for some odd number $q \neq \pm 1$.
Namely, only 2-braid knots satisfy the equality. 
\end{itemize}
\label{kt-k}
\end{thm}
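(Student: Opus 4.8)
The plan is to route both parts through the warping degree of a based diagram, the invariant developed later in Section \ref{section-wd}, since it converts the inequality $u(D)\le (c(D)-1)/2$ into a statement about a single combinatorial function on $D$. Fix a base point $b$ on a knot diagram $D$ with $n=c(D)$ crossings, and let $d_b(D)$ count the crossings whose first pass, as one traverses $D$ from $b$, is an under-pass; let $\overline{d}_b(D)$ be the analogous count for the reverse traversal. Changing exactly the crossings counted by $d_b(D)$ turns $D$ into a descending diagram, which is trivial, so $u(D)\le d_b(D)$, and likewise $u(D)\le \overline{d}_b(D)$. The first thing I would record is the identity $d_b(D)+\overline{d}_b(D)=n$, valid at every base point: at each crossing exactly one of its two passes is the under-pass, and that pass is counted by $d_b$ when it comes first and by $\overline{d}_b$ when it comes second. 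Hence $u(D)\le\min(d_b(D),\overline{d}_b(D))\le\lfloor n/2\rfloor$, which already yields the general bound $u(D)\le (c(D)-1)/2$ (for even $n$ one notes that $d_b$ cannot be constant in $b$, so some base point gives a strict inequality).

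For the equality case of (1), suppose $u(D)=(n-1)/2=:m$, so $n=2m+1$ is odd. Since $u(D)\le \min(d_b,\overline{d}_b)$ for every $b$ while $d_b+\overline{d}_b=2m+1$, the hypothesis forces $\{d_b(D),\overline{d}_b(D)\}=\{m,m+1\}$ for all $b$. As the base point is pushed past a single pass, $d_b$ increases by $1$ at an over-pass and decreases by $1$ at an under-pass; for the value to remain in $\{m,m+1\}$ it must strictly alternate, so the over- and under-passes alternate all the way around $D$, i.e. $D$ is alternating. A short separate argument rules out nugatory crossings: deleting one would produce a diagram $D'$ with $c(D')=2m$ and $u(D')=u(D)=m$, which contradicts $u(D')\le (2m-1)/2$ when $m\ge 1$; the excluded case $m=0$ is exactly the one-crossing diagram appearing in the statement. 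Thus for $m\ge1$ the diagram $D$ is reduced and alternating, and by Tait's theorem $c(D)=c(K)$.

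It then remains to identify which reduced alternating diagrams satisfy the sharpened condition $\min_b d_b(D)=m=(c(D)-1)/2$, and to show these are precisely the standard $2$-braid diagrams of $(2,q)$-torus knots. This is the step I expect to be the main obstacle, because alternation alone does not pin down the value of $\min_b d_b$: one must show that any reduced alternating diagram other than the $(2,q)$ ladder admits a base point together with a set of crossing changes of size strictly smaller than $(c(D)-1)/2$ that still descends the diagram, so that the $2$-braid pattern is the unique obstruction to a more efficient unknotting. I would attack this by analyzing the two-valued warping function along the Gauss sequence, the extremal requirement $\min_b d_b=(c-1)/2$ should force the interleaving pattern of the crossings to be a single repeated clasp, i.e. braid index $2$; equivalently, a diagram carrying Seifert circles beyond the braid-index-$2$ configuration would lower $\min_b d_b$ below $(c-1)/2$. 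Once (1) is established, (2) is formal: for a knot $K$ with $u(K)=(c(K)-1)/2$, a minimal diagram $D$ satisfies $u(D)\ge u(K)$ and $u(D)\le (c(D)-1)/2=u(K)$, so $u(D)=(c(D)-1)/2$; part (1) makes $D$ a reduced alternating $(2,q)$-diagram, whence $K$ is a $(2,q)$-torus knot, necessarily with $q$ odd and $q\neq\pm1$ for $K$ to be a nontrivial knot.
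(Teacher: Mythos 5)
You should first note that the paper contains no proof of this statement: it is Taniyama's theorem, quoted directly from \cite{kt}, so your attempt has to stand on its own as a complete argument. Much of it does stand: the identity $d_b(D)+\overline{d}_b(D)=c(D)$, the resulting bound $u(D)\le (c(D)-1)/2$, the deduction in the equality case that $d_b(D)\in\{m,m+1\}$ (where $m=(c(D)-1)/2$) for every base point and hence that $D$ is alternating, the nugatory-crossing argument (which correctly rests on the inequality $u(D)\le u(D')$), and the formal derivation of part (2) from part (1) are all sound. The gap is exactly where you flag it, and it is not a technical detail but the entire content of the theorem: showing that a reduced alternating diagram satisfying $u(D)=(c(D)-1)/2$ must be the standard $(2,q)$-torus diagram.

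Moreover, the route you propose for that step cannot succeed. All that the hypothesis yields through warping degree is the extremality condition $d(D)=d(-D)=(c(D)-1)/2$ (equivalently, $d_b\in\{m,m+1\}$ for all $b$), and this condition does not single out the $(2,q)$ ladders. Concretely, take the standard reduced alternating diagram $D$ of the twist knot $5_2$, with Gauss sequence $O1\,U2\,O3\,U1\,O4\,U5\,O2\,U3\,O5\,U4$: it is alternating, so $d_b$ takes only the two values $\{2,3\}$, and using the formulae of \cite{ASw} one checks $d(D)=d(-D)=2=(c(D)-1)/2$; yet changing a single crossing of its clasp produces a diagram of the unknot, so $u(D)=1$. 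The same example refutes your suggestion that Seifert-circle data beyond the braid-index-$2$ configuration would force $\min_b d_b$ below $(c(D)-1)/2$, since the braid index of $5_2$ is $3$. The underlying issue is that descending/ascending constructions only give \emph{upper} bounds for $u(D)$, while the equality case of the theorem requires a \emph{lower} bound on $u(D)$ valid for arbitrary sets of crossing changes. Taniyama's proof in \cite{kt} obtains this by induction on the crossing number using smoothings, treating the knot statement and the link statement (Theorem \ref{kt-l}) together; some argument of that kind is what is missing, and without it parts (1) and (2) remain unproven.
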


\noindent It is known that weaving knots $W(p,q)$ are not $(2,q)$-torus knots when $p\geq 3$ because the former is a hyperbolic knot whereas the latter is not for any $q$ (see~\cite{CKP}).
By Proposition \ref{umd}, we have $c(W(p,q))=(p-1)q$.
Thus, applying the contraposition of Theorem~\ref{kt-k}, we have the following corollary.

\begin{cor}
For weaving knots $W(p,q)$, we have 
$$u(W(p,q)) \leq \frac{c(W(p,q))-2}{2} = \frac{(p-1)q}{2}-1.$$
\label{cor-uc-1}
\end{cor}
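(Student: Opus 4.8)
The plan is to obtain the bound as an immediate consequence of the two ingredients assembled just above the statement: the classical inequality $u(K) \le (c(K)-1)/2$ valid for every nontrivial knot $K$, together with the rigidity statement Theorem \ref{kt-k}(2), which identifies exactly the knots that saturate it. First I would record that $W(p,q)$ is nontrivial and that the classical bound applies: by Proposition \ref{umd} the closed-braid diagram is a reduced alternating, hence minimal, diagram with $c(W(p,q)) = (p-1)q \ge 4$, so $W(p,q)$ is not the unknot.

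Next I would invoke the contrapositive of Theorem \ref{kt-k}(2). That theorem forces any knot attaining $u(K) = (c(K)-1)/2$ to be a $(2,q)$-torus knot; since a weaving knot $W(p,q)$ with $p \ge 3$ is not of this type (as noted in the text preceding the statement, and as can be checked from its known invariants, e.g.\ $W(3,2)$ is the figure-eight knot), equality is impossible. Hence the classical bound is strict, and substituting $c(W(p,q)) = (p-1)q$ from Proposition \ref{umd} gives
\[
u(W(p,q)) < \frac{c(W(p,q))-1}{2} = \frac{(p-1)q-1}{2}.
\]

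The only point requiring a little care is upgrading this strict inequality to the stated integral bound, and here the parity of $(p-1)q$ enters. Since $u(W(p,q))$ is a non-negative integer, I would argue by cases. When $p$ is odd the product $(p-1)q$ is even, so $((p-1)q-1)/2$ is a half-integer whose largest integer below it is exactly $(p-1)q/2 - 1$. When $p$ is even the coprimality condition forces $q$ odd, so $(p-1)q$ is odd, and the strict inequality between integers yields the even stronger $u(W(p,q)) \le ((p-1)q-3)/2$, which is again at most $(p-1)q/2 - 1$. In both cases $u(W(p,q)) \le (p-1)q/2 - 1 = (c(W(p,q))-2)/2$, as claimed.

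I do not expect a genuine obstacle in the deduction itself; the real content lives in the cited inputs, and the whole argument is a short chain. The one assertion that must be taken on faith at this stage is that $W(p,q)$ with $p \ge 3$ is never a $(2,q)$-torus knot, so a careful write-up should either establish this directly (for instance by comparing a distinguishing invariant such as the braid index, or the determinant and signature values already available for weaving links) or cite the point where it is recorded. Once that is granted, the corollary follows from the displayed strict inequality and the parity bookkeeping above.
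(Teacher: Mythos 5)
Your proposal is correct and follows essentially the same route as the paper: the classical bound $u(K)\le (c(K)-1)/2$, Taniyama's rigidity result (Theorem \ref{kt-k}(2)) ruling out equality since $W(p,q)$ is not a $(2,q)$-torus knot, and $c(W(p,q))=(p-1)q$ from Proposition \ref{umd}. You simply make explicit the details the paper leaves implicit (nontriviality of $W(p,q)$ and the parity bookkeeping needed to pass from the strict inequality to the stated bound), which is a faithful filling-in rather than a different argument.
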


\noindent For links $L$, it is well known that the inequality $u(D) \leq \frac{c(D)}{2}$ holds for any diagram $D$ of $L$, and consequently $u(L) \leq \frac{c(L)}{2}$ also holds. 
In \cite{kt}, the following theorem was also shown. 

\begin{thm}[\cite{kt}]
\begin{itemize}
\item[(1)] Let $D= K_1 \cup K_2 \cup \dots \cup K_r$ be a diagram of an $r$-component link that satisfies $u(D)= \frac{c(D)}{2}$. Then, each $K_i$ is a simple closed curve on $S^2$ and for each pair $i, j$, the subdiagram $K_i \cup K_j$ is an alternating diagram or a diagram without crossings. 
\item[(2)] Let $L$ be an $r$-component link that satisfies $u(L)= \frac{c(L)}{2}$. Then $L$ has a diagram $D = K_1 \cup K_2 \cup \dots \cup K_r$ such that each $K_i$ is a simple closed curve on $S^2$ and for each pair $i, j$, the subdiagram $K_i \cup K_j$ is an alternating diagram or a diagram without crossings. 
\end{itemize}
\label{kt-l}
\end{thm}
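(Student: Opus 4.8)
The plan is to derive part (2) from part (1) and to locate all the real content in part (1). For (2): if $L$ is an $r$-component link with $u(L)=c(L)/2$, I would pick a minimal diagram $D$, so $c(D)=c(L)$. Since $u(L)=\min_{D'}u(D')\le u(D)$, while (as explained below) every link diagram satisfies $u(D)\le c(D)/2$, I obtain $c(D)/2=c(L)/2=u(L)\le u(D)\le c(D)/2$, forcing $u(D)=c(D)/2$; then part (1) applied to $D$ produces the required diagram. So it suffices to prove the uniform bound $u(D)\le c(D)/2$ and to analyze its equality case.

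For the bound I would use the warping degree of an oriented, ordered, based link diagram: fix a total order on the components, an orientation of each, and a base point on each, traverse the whole diagram in the induced order, and call a crossing \emph{warping} if its under-arc is met before its over-arc. Changing all warping crossings makes $D$ descending, hence a diagram of the trivial link, so $u(D)$ is at most the warping count of any such choice. Reversing the component order together with every orientation flips the warping status of every crossing, so the chosen configuration and its total reversal have complementary warping counts summing to $c(D)$; the smaller of the two is $\le c(D)/2$, giving $u(D)\le c(D)/2$.

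The core is the equality case. The key structural observation is that each crossing is either a self-crossing of a single component or a crossing of a single unordered pair $\{K_i,K_j\}$, and, crucially, for a fixed order and orientation the warping status of an inter-component crossing of $\{K_i,K_j\}$ depends only on which of the two is earlier in the order, not on the base points, whereas the self-crossings of $K_i$ are governed, independently, by the base point of $K_i$. This lets me optimize the self-part of each component separately from the inter-part. Invoking the knot-level bound $u(\,\cdot\,)\le (c(\,\cdot\,)-1)/2$ behind Theorem~\ref{kt-k}, any component carrying self-crossings can be self-unknotted with strictly fewer than half of its self-crossings, which produces slack and contradicts $u(D)=c(D)/2$; hence in the equality case every $K_i$ is a simple closed curve. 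Given that, the unknotting localizes to the pairs $\{K_i,K_j\}$, and equality forces equality in each pair's own bound; a two-component pair of simple closed curves meeting $k$ times realizes $u=k/2$ exactly only when its diagram is alternating (the $(2,k)$-torus pattern), which yields condition (b).

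I expect the main obstacle to be making the equality-case rigidity precise, in two places. First, ruling out self-crossings requires the sharper knot inequality $u\le (c-1)/2$ rather than the weaker warping bound $c/2$; since the bare warping pairing only gives $c/2$, the genuine slack for a component with an \emph{even} number of self-crossings must be extracted by exploiting base-point moves past a self-crossing, and the borderline $(2,q)$-torus behavior of a single component is exactly the delicate point that Theorem~\ref{kt-k} is there to control. Second, although each crossing belongs to a unique component-pair, ``descending'' and ``trivial'' are global conditions, so the localization to pairs must be justified rather than assumed; showing that a global equality forces each pairwise sublink to sit at its own extremal $(2,k)$-torus configuration is where the argument is most delicate.
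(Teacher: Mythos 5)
The first thing to note is that the paper does not prove this statement at all: it is imported verbatim from Taniyama \cite{kt}, so there is no internal proof to compare against, and your proposal must stand on its own. Much of it does. The reduction of (2) to (1) via a minimal diagram, the universal bound $u(D)\le c(D)/2$ by the total-reversal warping argument, and the elimination of self-crossings are all correct. In particular, your observation that self-warping counts and inter-component counts can be optimized independently is sound: changing the chosen crossings turns $D$ into a stack in which each component lies entirely over all later ones, so the result is a split union of the component subdiagrams, and each component only needs to be \emph{unknotted} (not descending); the knot-diagram bound $u\le (c-1)/2$ then supplies slack of at least $1/2$ for any component with a self-crossing, contradicting equality. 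That correctly forces every $K_i$ to be a simple closed curve.

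The genuine gap is in condition (b), and you half-admit it. What the warping/stacking bound forces at equality is only that every pair has exactly $c_{ij}/2$ crossings with $K_i$ under $K_j$ (swapping two adjacent components in the order changes the total cost only at that pair, so equality for every order pins each pair at half); but ``half over, half under'' is strictly weaker than ``alternating'' --- for instance a four-crossing pair with pattern OOUU along $K_i$. To bridge this you assert, without proof, (i) that global equality forces $u(K_i\cup K_j)=c_{ij}/2$ for each pair (your ``localization''), and (ii) that a non-alternating pair of simple closed curves satisfies $u<c_{ij}/2$. Claim (ii) is provable: non-alternation gives two crossings $x,y$ of the pair, consecutive along $K_i$ among their mutual crossings, with $K_i$ (say) over at both; changing the remaining $c_{ij}/2-2$ crossings where $K_i$ is over leaves $K_i$ under $K_j$ except along a bridge that crosses over $K_j$ exactly twice, which can be isotoped away, so the pair becomes trivial. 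But claim (i) is not a formal consequence of equality: to cash in this local saving globally you must split $K_i\cup K_j$ from the other components by making it entirely over (or entirely under) all of them, and either choice alone may cost more than half of the crossings between the pair and the rest. The missing idea is an averaging step: run the procedure with the pair placed over everything and the rest stacked by some order, and its mirror with the pair placed under everything and the rest stacked by the reversed order; the two costs sum to at most $c(D)-4$, so one is at most $c(D)/2-2$, contradicting equality. Without this (or Taniyama's own argument), the proof is incomplete exactly at the theorem's main content. A further small inaccuracy: an alternating pair of simple closed curves need not be ``the $(2,k)$-torus pattern'' (a circle clasped alternately at two distant places is an alternating pair representing a connected sum of Hopf links), so the rigidity you should aim for is genuinely weaker than the $(2,q)$-torus rigidity of the knot case in Theorem \ref{kt-k}.
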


\noindent We have the following corollary. 

\begin{cor}
For a weaving link $W(p,np)$, if $p$ is a positive odd integer and $n$ is a positive integer, we have
$$u(W(p,np)) \leq \frac{c(W(p,np))-1}{2} = \frac{(p-1)np-1}{2}.$$
\end{cor}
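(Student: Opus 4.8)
The plan is to reduce the statement to ruling out equality in the universal bound $u(W(p,q)) \le c(W(p,q))/2$, and then to apply the diagram version of Taniyama's theorem (Theorem~\ref{kt-l}(1)) to the canonical braid-closure diagram. Since $p$ is odd, $c(W(p,q)) = (p-1)q$ is even, so for the integer $u(W(p,q))$ the asserted bound $((p-1)q-1)/2$ is equivalent to $u(W(p,q)) \le (p-1)q/2 - 1$; hence it suffices to prove that the equality $u(W(p,q)) = c(W(p,q))/2$ cannot hold.

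First I would promote this link-invariant equality to a diagram equality on the braid closure $D$ of $B_W(p,q)$. As $D$ is reduced alternating, it is minimal by Tait's first conjecture, so $c(D) = c(W(p,q))$. Combining the well-known diagram inequality $u(D) \le c(D)/2$ with $u(D) \ge u(W(p,q))$ (the invariant is the minimum of $u(D)$ over all diagrams), the assumed equality forces $u(D) = c(D)/2$. Theorem~\ref{kt-l}(1) then applies to $D$: every component must be a simple closed curve on $S^2$, and every pair of components must form an alternating subdiagram or one without crossings.

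I would contradict this by a strand-crossing analysis, using both hypotheses. The key combinatorial fact from $p \le q$ is that in each round exactly one strand travels from the first to the last position, crossing every other strand once; since $q \ge p$, each strand travels at least once, so \emph{every} pair of strands meets at a crossing. Two cases arise according to $d = \gcd(p,q)$, which divides $p$. If $d < p$, then some component runs through $p/d \ge 2$ strands, and these cross, so that component self-intersects in $D$, contradicting that it is a simple closed curve. If $d = p$, i.e. $p \mid q$, then every component is a single strand; here I use that $p$ is odd. Writing $B_W(p,q) = (B_W(p,p))^{q/p}$ and noting that the one-round permutation $\rho$ satisfies $\rho^p = \mathrm{id}$, the diagram $D$ is $q/p$ identical stacked copies of the canonical $B_W(p,p)$ diagram; by Proposition~\ref{pp-c} each pair of strands then crosses $2q/p \ge 2$ times with the same strand passing over at all of them, so following that strand shows the two-component subdiagram is non-alternating and has crossings, again contradicting Theorem~\ref{kt-l}(1). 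In either case equality is impossible, whence $u(W(p,q)) < c(W(p,q))/2$ and thus $u(W(p,q)) \le ((p-1)q-1)/2$.

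The step I expect to require the most care is the case $d = p$: making rigorous that the over/under pattern of a fixed pair of strands repeats identically across the $q/p$ copies, so that ``one strand over at both crossings'' from Proposition~\ref{pp-c} upgrades to ``one strand over at all $2q/p$ crossings.'' This is precisely where oddness of $p$ enters, since for even $p$ a pair of strands in $B_W(p,p)$ alternates (one over, one under, as in Proposition~\ref{pp-c}) and no contradiction would arise, which is consistent with the hypothesis restricting the improved bound to odd $p$.
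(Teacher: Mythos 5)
Your proposal is correct, and it rests on the same two pillars as the paper's own proof: Taniyama's diagram-level result (Theorem \ref{kt-l} (1)) applied to the closure $D$ of $B_W(p,q)$, together with Proposition \ref{pp-c}, with $p\leq q$ used both times to guarantee that every pair of strands actually crosses. The difference is in how the contradiction with Theorem \ref{kt-l} (1) is produced. The paper does not case-split: it takes any two strands lying in distinct components and asserts, citing Proposition \ref{pp-c}, that the two-component subdiagram containing them is non-alternating, so the pairwise clause of Theorem \ref{kt-l} (1) fails; it then states $u(D)\neq c(D)/2$ directly rather than running a contradiction through the link invariant. You instead split on $d=\gcd(p,q)$: when $d<p$ you invoke the \emph{other} clause of Theorem \ref{kt-l} (1) (each component must be a simple closed curve), which fails because a component containing $p/d\geq 2$ strands self-crosses once $q\geq p$; the non-alternating clause is needed only when $p\mid q$, where every component is a single strand, the subdiagram's crossings are exactly the mutual ones, and Proposition \ref{pp-c} plus the periodicity of the pure braid $B_W(p,p)$ gives ``the same strand is over at all $2q/p$ crossings'' cleanly. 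What your route buys is rigor exactly where the paper is terse: for a component made of several strands, the two-component subdiagram also contains self-crossings and crossings with the other strands of those components, so deducing non-alternating-ness from the two same-over crossings of Proposition \ref{pp-c} needs an argument the paper omits, and your case $d<p$ sidesteps that entirely (it even covers $\gcd(p,q)=1$ for free, and does not use oddness of $p$ there). What the paper's route buys is brevity and uniformity: one clause of Taniyama's theorem, no case analysis, at the cost of leaving the non-alternating claim essentially unjustified in the multi-strand case.
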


\begin{proof}
Let $D$ be the closure of the braid diagram $B_W(p,np)$. 
We note that $W(p,np)$ is a link of $p$ components and the $p$ strands of $B_W(p,np)$ belong to different components of $W(p,np)$. 
Since $p \leq np$, $B_W(p,p)$ is a subbraid of $B_W(p,np)$.
By Proposition \ref{pp-c}, for any odd number $p$, subdiagram of $B_W(p,p)$ consisting of any two distinct strands $s_i$ and $s_j$ is non-alternating.
Therefore, the subdiagram of $D$ consisting of any two components is also non-alternating, and hence, by the contraposition of Theorem \ref{kt-l} (1), we have $u(D)\neq \frac{c(D)}{2}$. 
\end{proof}

\section{Warping degree} 
\label{section-wd}

In this section, we discuss the warping degree of braid diagrams and their closures to estimate the unknotting number of weaving knots. 

\subsection{Warping degree of a link diagram} 

Let $D$ be an oriented $n$-component link diagram. 
Take a base point $b_i$ on each component of $D$. 
We denote the pair of a diagram $D$ and a sequence of base points $\mathbf{b} =\{ b_1, b_2, \dots , b_n \}$ by $D_{\mathbf{b}}$. 
A crossing $c$ is said to be a {\it warping crossing point of $D_{\mathbf{b}}$} if one encounters $c$ as an under-crossing first when one travels $D$ from $b_1$ to $b_1$, $b_2$ to $b_2$, $\dots$, $b_n$ to $b_n$ according to the orientation (\cite{K-l, ASw}). 
Let $D_i$ denote the knot component of $D_{\mathbf{b}}$ which has the base point $b_i$. 
We note that when $c$ is a crossing between $D_i$ and $D_j$ ($i<j$), $c$ is a warping crossing point of $D_{\mathbf{b}}$ for $\mathbf{b}=(b_1, b_2, \dots , b_n)$ if and only if $D_i$ is under $D_j$ at $c$. 

The {\it warping degree of $D_{\mathbf{b}}$}, $d(D_{\mathbf{b}})$, is the number of the warping crossing points of $D_{\mathbf{b}}$. 
The {\it warping degree of $D$}, $d(D)$, is the minimum value of $d(D_{\mathbf{b}})$ for all sequences of base points $\mathbf{b}$. 
It is known that a link diagram represents a trivial link if $d(D)=0$ (see, for example, \cite{K-l, KKT}). 

\subsection{Warping degree of a braid diagram} 

In this subsection, the canonical diagram of the braid $B_W(p,q)$ is also denoted by $B_W(p,q)$. 
We define the warping degree for braid diagrams\footnote{Recently, in \cite{NSP}, the ``warping labeling'' for a braid diagram and its generalizations are defined following properties of the warping degrees of the closure. The ``warping degree'' of a braid diagram defined in this paper is different from them; it takes a numerical value and is computable without considering the closure.}. 
Let $B$ be a braid diagram of $n$ strands. 
Take a base point $b_i$ at the top of each strand. 
We denote the pair of $B$ and a sequence of base points $\mathbf{b}$ by $B_{\mathbf{b}}$. 
Let $s_i$ denote the strand of $B$ which has the base point $b_i$. 
For a crossing $c$ between strands $s_i$ and $s_j$ where $b_i$ is positioned ahead of $b_j$ in the sequence $\mathbf{b}$, $c$ is said to be a {\it warping crossing point of $B_{\mathbf{b}}$} if $s_i$ is under $s_j$ at $c$. 
The {\it warping degree of $B_{\mathbf{b}}$}, denoted by $d(B_{\mathbf{b}})$, is the number of the warping crossing points of $B_{\mathbf{b}}$. 
The {\it warping degree of $B$}, $d(B)$, is the minimum value of $d(B_{\mathbf{b}})$ for all sequences $\mathbf{b}$ of base points $b_i$ at the top of the strands. 

\begin{ex}
Let $B=B_W(7,7)$ with base points $b_1, b_2, \dots , b_7$ as shown in Figure \ref{fig-w77}. 
If we take a sequence of base points $\mathbf{b}=(b_1, b_2, b_3, b_4, b_5, b_6, b_7)$, we have $d(B_{\mathbf{b}})=22$. 
If we take $\mathbf{b}'=(b_1, b_3, b_5, b_7, b_2, b_4, b_6)$, we have $d(B_{\mathbf{b}'})=12$. 
\begin{figure}[h]
\centering
\includegraphics[width=4cm]{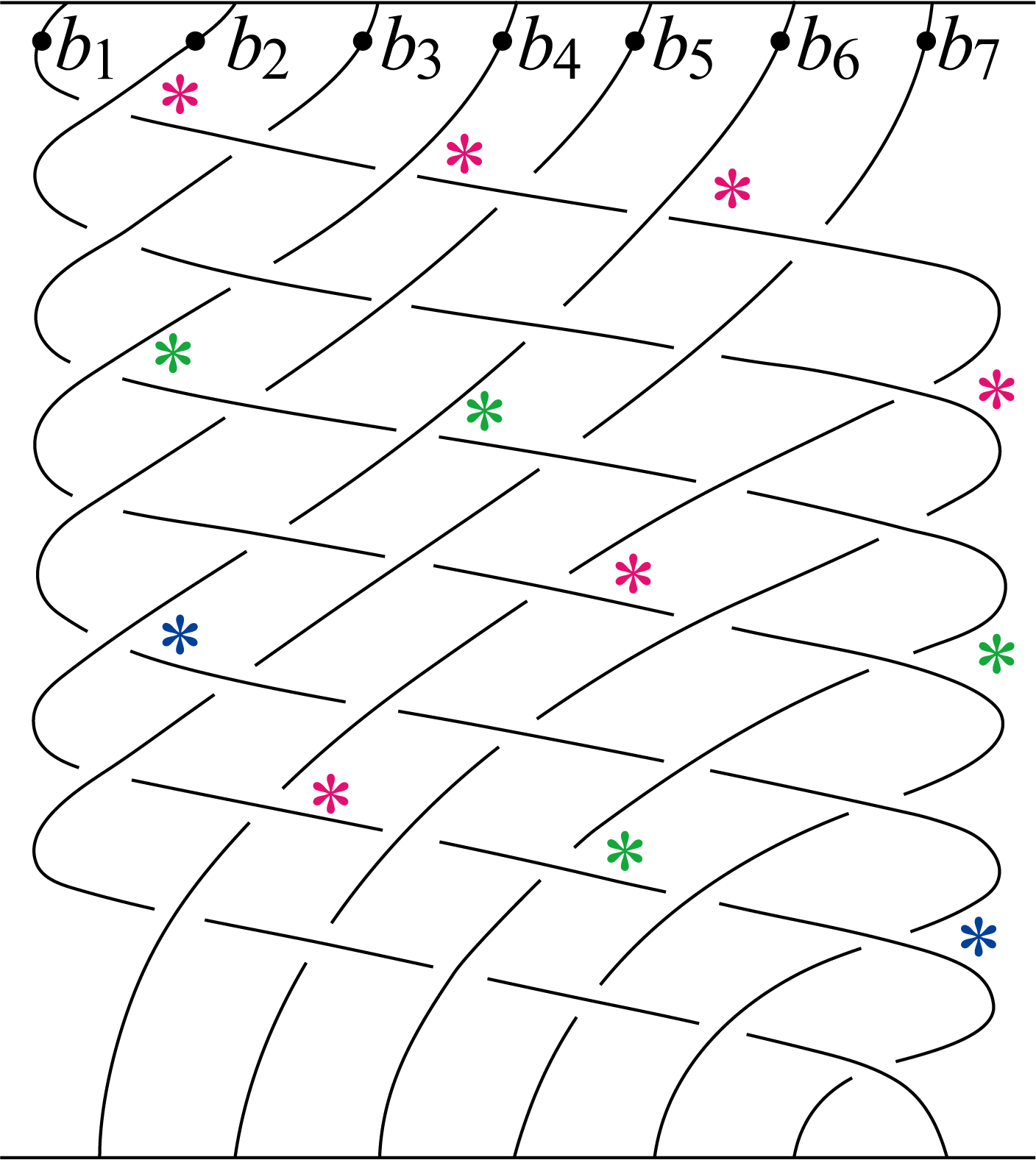}
\caption{The braid diagram $B_W(7,7)$ has warping degree 12 with the sequence of base points $\mathbf{b}'=(b_1, b_3, b_5, b_7, b_2, b_4, b_6)$.}
\label{fig-w77}
\end{figure}
\end{ex}

\noindent Let $B$ be a braid diagram. 
When a strand of $B$ is positioned at the $i$th on the top and at the $j$th on the bottom, we denote $\rho (i)=j$. 
For example, the braid diagram in Figure \ref{fig-b5} has $\rho(1)=3$, $\rho(2)=5$, $\rho(3)=1$, $\rho(4)=2$, $\rho(5)=4$. 
We also denote it by $\rho(1,2,3,4,5)=(3,5,1,2,4)$. 
For pure braids, we have the following lemma. 
\begin{figure}[h]
\centering
\includegraphics[width=2.3cm]{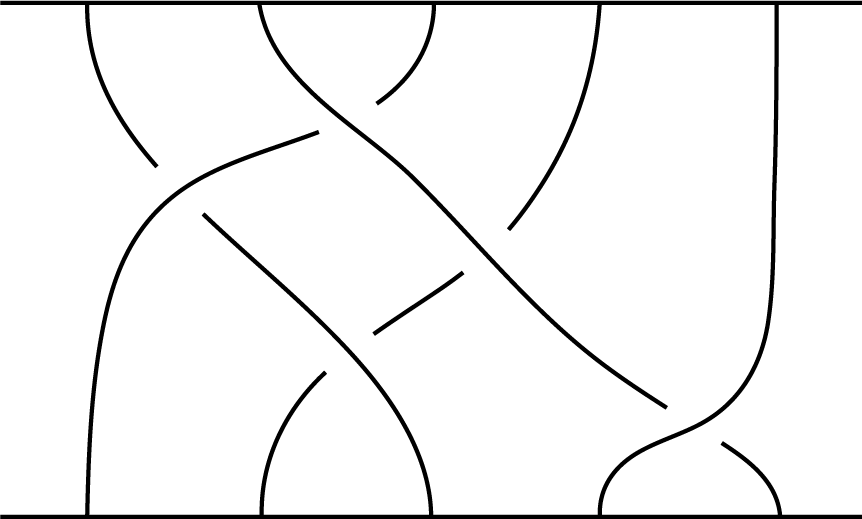}
\caption{$\rho (1,2,3,4,5)=(3,5,1,2,4)$.}
\label{fig-b5}
\end{figure}

\begin{lem}
Let $B$ be a pure braid diagram, namely, a braid diagram with the identity permutation $\rho (1,2, \dots ,n)=(1,2, \dots ,n)$. 
If $d(B)=0$, then $B$ is equivalent to the trivial braid. 
\label{d0-t}
\end{lem}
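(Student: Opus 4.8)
The plan is to argue by induction on the number $n$ of strands, exploiting the fact that $d(B)=0$ encodes a consistent depth ordering of the strands. By the definition of the warping degree, $d(B)=0$ means there is a sequence of base points, say $\mathbf{b}=(b_{\pi(1)}, \dots, b_{\pi(n)})$, with \emph{no} warping crossing point; equivalently, at \emph{every} crossing the strand appearing earlier in $\mathbf{b}$ is the over-strand. After relabelling the strands we may assume $\mathbf{b}=(b_1,\dots,b_n)$, so that $s_1$ passes over every strand it meets, $s_2$ passes over every strand except $s_1$, and so on. Thus $d(B)=0$ exhibits $B$ as a ``layered'' braid with $s_1$ frontmost, and the idea is to peel off the strands one layer at a time.

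The base case $n=1$ is immediate, since a pure braid on one strand is already the trivial braid. For the inductive step I would consider the frontmost strand $s_1$. Because $s_1$ is never an under-strand, it can be isotoped in the depth direction until it lies entirely in a collar above the surface carrying the remaining strands; this isotopy changes no over/under information precisely because $s_1$ is over at each of its crossings. The arc $s_1$ is monotone in the braid direction, hence unknotted, and by the purity hypothesis $\rho=\mathrm{id}$ its top and bottom endpoints lie at the same horizontal position, namely position $1$. In the contractible region above the other strands such a monotone arc is isotopic rel endpoints to the trivial vertical strand. Deleting $s_1$ leaves a braid $B'$ on the strands $s_2,\dots,s_n$, which is again pure and which still satisfies $d(B')=0$ via the restricted sequence $(b_2,\dots,b_n)$, since every surviving crossing retains the property that the earlier strand is over. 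By the induction hypothesis $B'$ is equivalent to the trivial braid, and reinstating the already-trivialised strand $s_1$ shows that $B$ is equivalent to the trivial braid.

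The step I expect to require the most care is the geometric claim that an always-over pure strand can genuinely be peeled off, that is, that $d(B)=0$ forces a true layering rather than merely a crossing-by-crossing pattern. To make this rigorous I would view $B$ inside $D^2\times[0,1]$ with all strands monotone in the $[0,1]$-coordinate, realise ``over'' as a comparison of a transverse depth coordinate, and observe that ``never under'' lets the depth of $s_1$ be raised so as to be maximal throughout, so that no crossing change is induced during the isotopy. Monotonicity then rules out any knotting of $s_1$, and the purity of $B$ is exactly what guarantees both that $s_1$ straightens to the \emph{trivial} vertical strand (its endpoints being vertically aligned) and that the strand deletion yields a pure braid $B'$ to which the induction hypothesis applies.
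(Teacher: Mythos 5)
Your proof is correct, but it takes a genuinely different route from the paper's. The paper passes to the closure: since $B$ is pure, a sequence $\mathbf{b}$ with $d(B_{\mathbf{b}})=0$ gives one base point on each of the $n$ components of the closure $D$, so $d(D_{\mathbf{b}})=0$; the paper then invokes the known fact that a link diagram of warping degree zero represents a trivial link, and finishes by asserting that, fixing the closure arcs, $B$ can be transformed into the crossingless braid diagram. Your argument never takes the closure at all: it is a direct induction on the number of strands, peeling off the frontmost strand of the layered (descending) structure that $d(B)=0$ provides. What the paper's route buys is brevity and reuse of cited results on warping degree of link diagrams; what yours buys is self-containedness and rigor at exactly the point where the paper is terse --- triviality of the closure as a link does not by itself formally imply triviality of $B$ as a braid (in general, ``the closure of an $n$-braid is the $n$-component unlink only for the trivial braid'' is a hard theorem of Birman--Menasco), so the paper's final sentence implicitly appeals to the same depth-layering and peeling argument that your induction makes explicit. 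Two loose ends in your write-up are minor and routine: the first strand of the sequence sits at some position $k$, not necessarily the leftmost, and straightens to the vertical strand at that position; and when you reinstate the peeled strand you should note that the inductive isotopy of $B'$ can be compressed into the depth region below the lifted strand, so it does not interfere with it.
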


\begin{proof}
Let $B$ be a pure braid diagram of $d(B)=0$ with $n$ strands. 
Let $\mathbf{b}$ be a sequence of base points of $B$ such that $d(B_{\mathbf{b}})=d(B)=0$. 
Let $D$ be the closure of $B$. 
Then $D$ is a diagram of an $n$-component trivial link because $d(D_{\mathbf{b}})=0$ with the same $\mathbf{b}$. 
Fixing the closure part, $B$ can be transformed into the braid diagram with no crossings. 
\end{proof}

\noindent We remark that Lemma \ref{d0-t} does not hold when the permutation of a braid is not identity. 
For example, the braid diagram with $\rho (1,2,3)=(3,2,1)$ depicted in Figure \ref{fig-c-ex} has warping degree zero and the closure is the Hopf link, which is a nontrivial link. 
We will discuss more in Subsection \ref{section-wd-closure}. 
\begin{figure}[h]
\centering
\includegraphics[width=1.7cm]{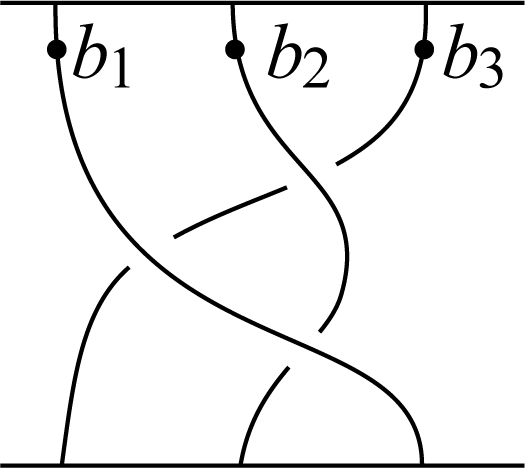}
\caption{A braid diagram with warping degree zero.}
\label{fig-c-ex}
\end{figure}

\noindent By taking suitable sequence of base points, we have the following lemmas for $B_W(p,p)$. 

\begin{lem}
When $p$ is a positive odd number, 
$$d(B_W(p,p)) \leq \frac{p^2-1}{4}.$$
\label{pp-o}
\end{lem}

\begin{proof}
Take base points $b_1, b_2, \dots , b_p$ at the top of strands from left to right. 
Let $s_i$ denote the strand which has the base point $b_i$. 
By Proposition \ref{pp-c}, each pair of strands have exactly two crossings, and at both crossings one of the two strands is over than the other one.  
More precisely, when $i$ is an odd number, $s_i$ is under $s_j$ for all even numbers $j$ with $i<j$ and all odd numbers $j$ with $i>j$. 
When $i$ is an even number, $s_i$ is under $s_j$ for all odd numbers $j$ with $i<j$ and all even numbers $j$ with $i>j$. 
Take a sequence of base points $\mathbf{b}=(b_1, b_3, b_5, \dots , b_p, b_2, b_4, b_6, \dots , b_{p-1})$. 
Then for each odd number $i$, the strand $s_i$ has warping crossing points at crossing with $s_j$ for all even number $j$ with $i<j$, and the number of warping crossing points on $s_i$ is $\left(\frac{p-1}{2}-\frac{i-1}{2}\right)\times 2 = p-i$. 
Note that there are no warping crossing points between $s_i$ and $s_j$ when both $i$ and $j$ are even or odd. 
Hence, 
\begin{align*}
d(B_W(p,p))=\sum_{i \text{ odd}}(p-i)
= \sum_{k=1}^{\frac{p+1}{2}} \left( p-(2k-1) \right)
= \frac{p^2-1}{4}.
\end{align*} 
\end{proof}

\begin{lem}
When $p$ is a positive even number, 
$$d(B_W(p,p))=\frac{p(p-1)}{2}.$$
\label{pp-e}
\end{lem}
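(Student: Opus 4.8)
The plan is to exploit the over/under structure of the two crossings between each pair of strands, established in Proposition \ref{pp-c}, to show that the warping degree is \emph{independent} of the chosen sequence of base points. First I would fix an arbitrary sequence $\mathbf{b}$ of base points at the tops of the strands and consider a single pair of strands $s_i$ and $s_j$. By Proposition \ref{pp-c}, for even $p$ these two strands cross exactly twice, and at one crossing $s_i$ lies over $s_j$ while at the other $s_j$ lies over $s_i$.

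Next I would argue that exactly one of these two crossings is a warping crossing point of $B_{\mathbf{b}}$, regardless of the order in which $b_i$ and $b_j$ appear in $\mathbf{b}$. Suppose without loss of generality that $b_i$ precedes $b_j$ in $\mathbf{b}$. By the definition of a warping crossing point of a braid diagram, a crossing between $s_i$ and $s_j$ is warping precisely when $s_i$ (the strand whose base point comes first) passes under $s_j$. Among the two crossings, exactly one has $s_i$ under $s_j$; hence exactly one is a warping crossing point. Crucially, this contribution of one warping crossing point per pair is the same whether $b_i$ or $b_j$ comes first in $\mathbf{b}$.

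Summing over all $\binom{p}{2}$ unordered pairs of strands then gives $d(B_{\mathbf{b}}) = \binom{p}{2} = \frac{p(p-1)}{2}$ for every sequence $\mathbf{b}$. Since this value is the same for all $\mathbf{b}$, taking the minimum over $\mathbf{b}$ yields $d(B_W(p,p)) = \frac{p(p-1)}{2}$, as claimed.

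I do not anticipate a genuine obstacle here; the real content is that the warping degree is \emph{forced} to be constant, which is precisely what makes the even case an equality rather than merely an upper bound. The only point requiring care is the bookkeeping against the definition of a warping crossing point: one must confirm that the ``over at one crossing, under at the other'' structure from Proposition \ref{pp-c} guarantees exactly one warping crossing per pair for \emph{every} ordering. This is in sharp contrast to the odd case of Lemma \ref{pp-o}, where one strand is over at both of its crossings with another, so a pair contributes either zero or two warping crossings depending on the ordering, and one can therefore optimize the sequence of base points to obtain the strict inequality $\frac{p^2-1}{4}$.
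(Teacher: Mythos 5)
Your proof is correct and follows essentially the same route as the paper: both use Proposition \ref{pp-c} to show that each pair of strands contributes exactly one warping crossing point regardless of the ordering of base points, so $d(B_{\mathbf{b}})$ is constant (equal to $\binom{p}{2} = \frac{p(p-1)}{2}$, i.e.\ half the total crossing number) over all sequences $\mathbf{b}$. Your added remark contrasting this with the odd case correctly identifies why the even case yields an equality rather than just an upper bound.
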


\begin{proof}
Any pair of strands have exactly two crossings, where one strand is over at one crossing and the other one is over at the other crossing by Proposition \ref{pp-c}. 
This means that exactly one of the two crossings is a warping crossing point for any order of base points. 
Hence $d(B_W(p,p)_{\mathbf{b}})$ is equal to half the total number of crossings for any sequence $\mathbf{b}$ of base points. 
\end{proof}

\subsection{Warping degree and unknotting number}
\label{section-wd-closure}

In this subsection, we consider the warping degree of the closure of a braid diagram and discuss the unknotting number. 
Let $B$ be a braid diagram of $n$ strands with braid permutation $\rho$. 
Let $\mathbf{b}=(b_1, b_2, \dots , b_n)$ be a sequence of base points of $B$, where $b_1, b_2, \dots , b_n$ are positioned from left to right. 
Let $\mathbf{a}$ be a sequence of the base points $b_1, b_2, \dots , b_n$ with some order. 
We say $\mathbf{a}$ {\it follows the closure} when the next component to $b_i$ is $b_{\rho (i)}$ if there does not exist $b_{\rho (i)}$ before $b_i$. 
For example, $\mathbf{b}_1=(b_1, b_3, b_4, b_2, b_5)$ and $\mathbf{b}_2=(b_5, b_4, b_2, b_3, b_1)$ follow the closure, whereas $\mathbf{b}_3=(b_5, b_2, b_3, b_1, b_4)$ does not follow the closure for the braid diagram in Figure \ref{fig-b5} with the base points shown in Figure \ref{fig-b5-c}. 
For a braid diagram $B$, we define the {\it closed warping degree}, $\overline{d}(B)$, to be the minimum value of $d(B_{\mathbf{b}})$ for all sequences of base points $\mathbf{b}$ which follow the closure. 
For example, for the braid diagram $B$ in Figure \ref{fig-b5-c}, we have $\overline{d}(B) \leq d(B_{\mathbf{b}_2})=2$. 
We note that $\overline{d}(B) \neq d(B) = d(B_{\mathbf{b}_3})=0$ (we can check $\overline{d}(B) \neq 0$ by Proposition \ref{dd-inequality} and the linking number of the closure.)
We have the following inequality.

\begin{prop}
Let $D$ be the closure of a braid diagram $B$ with the same orientation to $B$. 
Then the inequality $d(D) \leq \overline{d}(B)$ holds.
\label{dd-inequality}
\end{prop}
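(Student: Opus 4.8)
The plan is to deduce the inequality from a single construction: given any sequence $\mathbf{b}$ of base points of $B$ that follows the closure, I will build a sequence $\mathbf{b}'$ of base points of the closed diagram $D$ with $d(D_{\mathbf{b}'}) \le d(B_{\mathbf{b}})$. Since $d(D)$ is the minimum of $d(D_{\mathbf{c}})$ over \emph{all} base-point sequences $\mathbf{c}$ of $D$, this gives $d(D) \le d(B_{\mathbf{b}})$ for every $\mathbf{b}$ following the closure, and taking the minimum over such $\mathbf{b}$ yields $d(D) \le \overline{d}(B)$.

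First I would pin down the shape of a sequence that follows the closure. The components of $D$ are in bijection with the cycles of the permutation $\rho$: a strand occupying position $i$ at the top belongs to the component obtained by following the braid down to position $\rho(i)$ at the bottom, then along a closure arc back up to position $\rho(i)$ at the top, and so on. Unwinding the defining condition ``the successor of $b_i$ is $b_{\rho(i)}$ whenever $b_{\rho(i)}$ has not yet appeared'' shows that in such a sequence the base points belonging to one cycle occur as a single contiguous block, listed in the cyclic order $b_{i_0}, b_{\rho(i_0)}, b_{\rho^2(i_0)}, \dots$ starting from whichever cycle element appears first, and the distinct cycles then appear as consecutive blocks in some order. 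I would define $\mathbf{b}'$ accordingly: on each component of $D$ place the base point at the top of the strand $s_{i_0}$ that starts that cycle's block, and order the components of $D$ in the same order in which their blocks appear in $\mathbf{b}$. Because the closure arcs carry no crossings, traversing a component of $D$ from its base point visits the strands of the corresponding cycle exactly in the order $s_{i_0}, s_{\rho(i_0)}, \dots$, that is, in the order of their base points within the block of $\mathbf{b}$.

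Next I would check crossing by crossing that warping status is preserved; the crossings of $D$ are precisely those of $B$, so it suffices to match them. For a crossing $c$ between strands $s_i$ and $s_j$ lying in \emph{different} cycles, the two cycles form disjoint blocks, so $b_i$ precedes $b_j$ in $\mathbf{b}$ exactly when the component of $s_i$ precedes that of $s_j$ in $\mathbf{b}'$; in either picture $c$ is warping precisely when the earlier strand is the under-strand, so the two agree. For a \emph{self}-crossing $c$ of a component, both strands $s_i, s_j$ lie in one cycle with, say, $b_i$ before $b_j$ in the block; then $s_i$ is traversed before $s_j$ along the component, so the first passage through $c$ occurs on $s_i$, and $c$ is warping in $D_{\mathbf{b}'}$ exactly when $s_i$ is under at $c$ --- again the same condition as in $B_{\mathbf{b}}$. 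Here I use that a braid strand never crosses itself, so the two strands at any crossing are genuinely distinct. Summing over all crossings gives $d(D_{\mathbf{b}'}) = d(B_{\mathbf{b}})$, which is stronger than the required inequality.

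The one step demanding care is the self-crossing case: I must verify that placing the base point at the top of the cycle's starting strand really makes the first passage through $c$ occur on the strand whose base point comes earlier in $\mathbf{b}$, and that the closure arcs introduce no crossings that could alter this count. Once the contiguous-block description of closure-following sequences is in hand, both points follow from the monotonicity of the braid and the crossingless closure, and the proof closes.
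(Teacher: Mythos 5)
Your proposal is correct and takes essentially the same route as the paper: the paper likewise picks, for each component of $D$, the base point appearing first in a closure-following sequence $\mathbf{b}$ realizing $\overline{d}(B)$ (the start of your ``block''), orders the components of $D$ accordingly, and observes that the two diagrams are traversed in the same order so that the warping crossing points coincide, giving $d(D) \leq d(D_{\mathbf{c}}) = d(B_{\mathbf{b}}) = \overline{d}(B)$. Your write-up merely makes explicit the contiguous-block structure of closure-following sequences and the crossing-by-crossing verification that the paper compresses into a single sentence.
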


\begin{proof}
Let $\mathbf{b}$ be a sequence of base points which follows the closure with $d(B_{\mathbf{b}}) = \overline{d}(B)$. 
If the closure $D$ of $B$ represents an $r$-component link, select the base point of each component of the link which appears in the sequence $\mathbf{b}$ for the first time. 
Then we obtain a sequence $\mathbf{c}$ of $r$ base points for $D$, where the components of $\mathbf{c}$ follows the order of $\mathbf{b}$. 
Since we can travel the diagrams $B$ and $D$ in the same order according to $\mathbf{b}$ and $\mathbf{c}$, $B_{\mathbf{b}}$ and $D_{\mathbf{c}}$ have warping crossing points at the same crossings. 
Hence, $d(D) \leq d(D_{\mathbf{c}})= d(B_{\mathbf{b}}) = \overline{d}(B)$. 
\end{proof}

\begin{figure}[h]
\centering
\includegraphics[width=3.5cm]{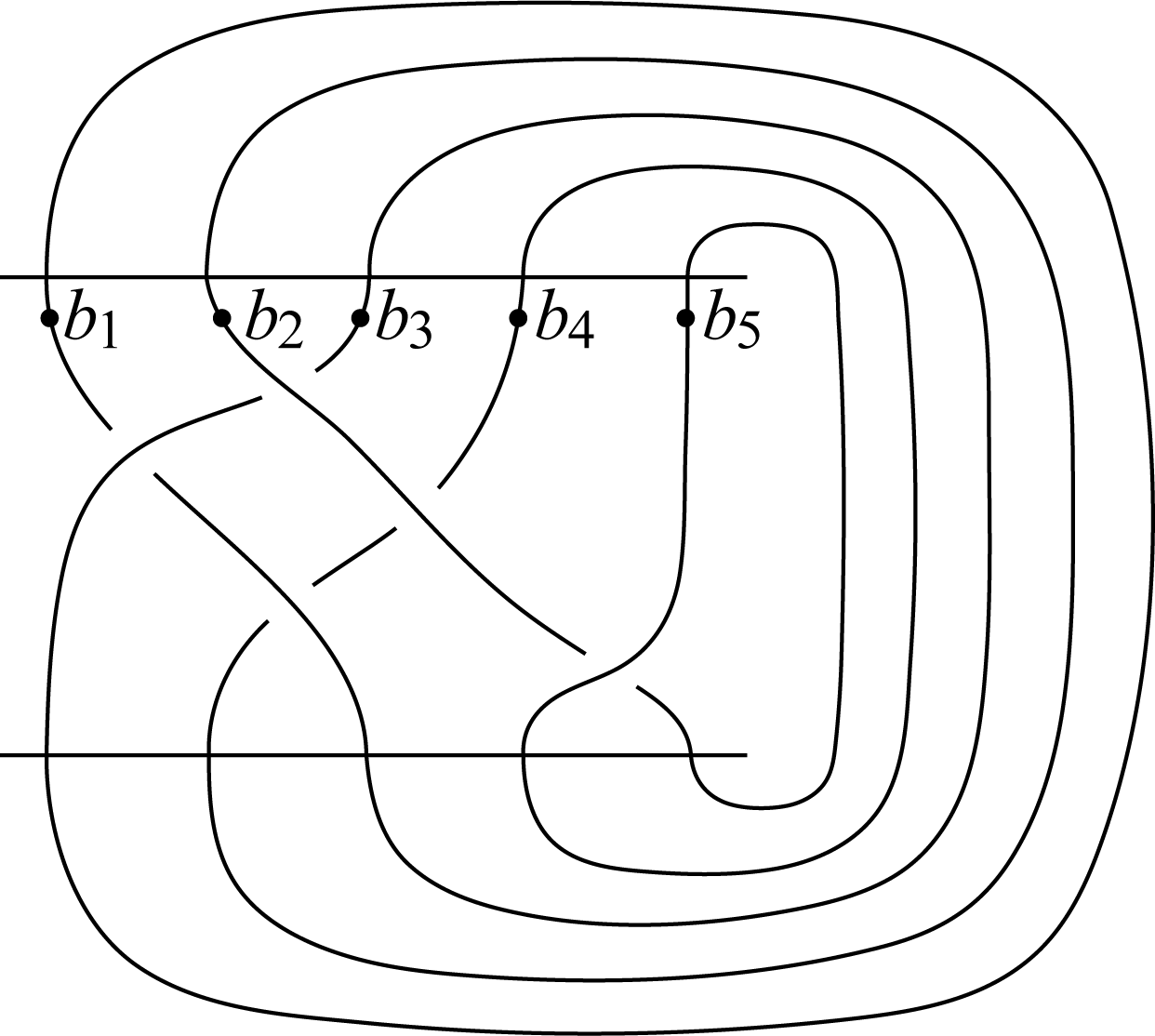}
\caption{$d(B_{(b_5, b_4, b_2, b_3, b_1)})=d(D_{(b_5, b_3)})$.}
\label{fig-b5-c}
\end{figure}

\noindent For example, for the braid diagram $B$ and its closure $D$ in Figure \ref{fig-b5-c}, we have $d(D) \leq \overline{d}(B) \leq 2$. 
We have the following corollary. 

\begin{cor}
If a braid diagram $B$ satisfies $\overline{d}(B)=0$, then the closure $D$ of $B$ represents a trivial knot or link. 
\label{cor-d0}
\end{cor}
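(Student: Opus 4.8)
The plan is to obtain this corollary as an immediate consequence of Proposition \ref{dd-inequality} together with the known characterization of diagrams of warping degree zero recalled earlier in the excerpt. The strategy is simply to chain the inequality $d(D) \le \overline{d}(B)$ with the hypothesis and then invoke triviality.

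First I would apply Proposition \ref{dd-inequality} to the braid diagram $B$ and its closure $D$, which gives the inequality
$$d(D) \le \overline{d}(B).$$
Substituting the hypothesis $\overline{d}(B) = 0$ yields $d(D) \le 0$. Since the warping degree $d(D)$ is by definition the minimum, over all sequences $\mathbf{b}$ of base points, of the number $d(D_{\mathbf{b}})$ of warping crossing points, it is a count of crossings and hence a non-negative integer. Combining $d(D) \ge 0$ with $d(D) \le 0$ forces $d(D) = 0$.

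The second and final step is to quote the fact, recalled in the subsection on the warping degree of a link diagram (and attributed there to \cite{K-l, KKT}), that a link diagram $D$ with $d(D) = 0$ represents a trivial link. Applying this to our $D$ shows that the closure of $B$ is a trivial knot or link, as claimed.

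I do not anticipate any genuine obstacle here: the content of the corollary has already been packaged into Proposition \ref{dd-inequality} and the triviality criterion for warping degree zero, so the argument is purely a matter of combining these two facts with the non-negativity of $d(D)$. The only point that deserves a word of care is the reduction from $\overline{d}$ (a minimum over the restricted class of sequences that follow the closure) to $d(D)$ (a minimum over all sequences for the closed diagram); this is exactly the passage from $\mathbf{b}$ to the induced sequence $\mathbf{c}$ carried out in the proof of Proposition \ref{dd-inequality}, so nothing further need be verified.
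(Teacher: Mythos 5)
Your proof is correct and is exactly the argument the paper intends: the corollary is stated as an immediate consequence of Proposition \ref{dd-inequality} combined with the fact, recalled earlier with references \cite{K-l, KKT}, that a link diagram $D$ with $d(D)=0$ represents a trivial link. Nothing further is needed.
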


Let $c(B)$ denote the number of crossings of a braid diagram $B$. 
Let $u(B)$ denote the minimum number of crossing changes which are required to transform $B$ into a braid whose closure is a trivial knot or link. 
We have the following. 

\begin{prop}
We have 
$$u(B) \leq \frac{c(B)}{2}.$$
\end{prop}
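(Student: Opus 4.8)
The plan is to reduce the statement to the warping degree of the closure and then run a complementary-orientation argument. First I would pass to the closure $D$ of $B$: since the closure arcs introduce no new crossings, every crossing of $B$ is a crossing of $D$, so that $c(D)=c(B)$ and a crossing change on $B$ is the same operation as the corresponding crossing change on $D$. Consequently, exhibiting a set of at most $c(B)/2$ crossings of $D$ whose simultaneous change turns $D$ into a trivial-link diagram will immediately give $u(B)\le c(B)/2$. By the fact that a link diagram of warping degree $0$ is trivial (equivalently, via Corollary \ref{cor-d0} applied after re-closing), it suffices to produce, after at most $c(B)/2$ crossing changes, a diagram of warping degree $0$ for a suitable orientation and base-point sequence.

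Next I would set up the complementary pair. Fix an orientation of $D$ and a base-point sequence $\mathbf{c}=(c_1,\dots,c_n)$, and compare it with the configuration $\mathbf{c}^{*}$ obtained by reversing the orientation of every component and reversing the order of the components in the sequence. The key claim is that each crossing of $D$ is a warping crossing point of exactly one of the two configurations, so that
\[
d(D_{\mathbf{c}})+d(D_{\mathbf{c}^{*}})=c(D)=c(B).
\]
For a crossing between two distinct components the warping condition depends only on which component is the under-strand and on the order of the two components in the sequence; reversing that order flips the condition, while the orientation reversal is irrelevant. For a self-crossing of a single component the warping condition depends only on whether the under-passage is met first; reversing the orientation of that component interchanges the two passages and hence flips the condition. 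Either way the crossing contributes to exactly one of the two warping degrees, giving the displayed identity.

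Granting the identity, at least one of $d(D_{\mathbf{c}})$ and $d(D_{\mathbf{c}^{*}})$ is at most $c(B)/2$; choosing that configuration and changing the crossings at all of its warping crossing points yields a diagram of warping degree $0$, i.e.\ a trivial-link diagram, using at most $c(B)/2$ changes. Since these are crossings of $B$, the same changes turn $B$ into a braid with trivial closure, whence $u(B)\le c(B)/2$. I expect the main obstacle to be the bookkeeping in the self-crossing case: one must verify that reversing orientation genuinely exchanges the first and second passages through a self-crossing (so that ``under first'' becomes ``under second''), while simultaneously confirming that this orientation reversal does not disturb the tally at inter-component crossings and that the braid's fixed top-to-bottom convention is compatible with the orientation freedom used for $D$. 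Once this crossing-by-crossing complementarity is pinned down, the conclusion is immediate.
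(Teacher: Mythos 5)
Your proof is correct and is essentially the paper's argument: both rest on the fact that, for a single traversal, the warping and the non-warping crossing points partition the $c(B)$ crossings, so one of the two classes has at most $c(B)/2$ elements, and changing all crossings in that class yields a descending (hence trivial) closure. Your reversed configuration $\mathbf{c}^{*}$ is just an explicit verification that the non-warping crossing points of $D_{\mathbf{c}}$ are exactly the warping crossing points of the reversed diagram --- i.e.\ that changing them produces an ascending diagram --- which is the step the paper invokes directly via the closed warping degree $\overline{d}(B)$ and the two bounds $u(B)\leq \overline{d}(B)$ and $u(B)\leq c(B)-\overline{d}(B)$.
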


\begin{proof}
Let $D$ be the closure of a braid diagram $B$. 
Since there are no crossings in the closure part, we have $u(B)=u(D)$ and $c(B)=c(D)$. 
Then the inequality follows from the inequality $u(D) \leq \frac{c(D)}{2}$.  
\end{proof}

\noindent Since $c(B_W(p,q))=(p-1)q$, we have $u(B_W(p,q)) \leq (p-1)q/2$. 
Moreover, by the contraposition of Theorem \ref{kt-k}, we have the following corollary.

\begin{cor}
For any integers $p \geq 3$ and $q$ with $\gcd(p,q)=1$, we have
$$u(B_W(p,q)) \leq \frac{(p-1)q}{2}-1.$$ 
\label{cor-ub}
\end{cor}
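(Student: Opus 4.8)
The plan is to reduce the statement about the braid quantity $u(B)$ to Taniyama's Theorem \ref{kt-k} by passing to the closure. Since $\gcd(p,q)=1$, the closure $D$ of $B=B_W(p,q)$ is a knot diagram of the weaving knot $W(p,q)$, and by Proposition \ref{umd} it is a reduced alternating diagram with $c(D)=(p-1)q$. The first step is to identify $u(B)$ with the diagram unknotting number of $D$: the closure arcs carry no crossings, so every crossing of $D$ is a crossing of $B$ and conversely, and trivializing the closure of $B$ is literally the same as turning $D$ into a diagram of the unknot by crossing changes. Writing $u(D)$ for the minimal number of crossing changes on the fixed diagram $D$ needed to reach an unknot diagram, I therefore have $u(B)=u(D)$. (The degenerate case $q=1$, where $W(p,q)$ is already the unknot and both sides are nonnegative, is immediate, so I assume $q\geq 2$ below.)

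Second, I would invoke the inequality $u(D)\leq \tfrac{c(D)-1}{2}$, valid for every knot diagram; this is the diagram level form of the familiar bound $u(K)\leq \tfrac{c(K)-1}{2}$ and is exactly the inequality whose equality case Theorem \ref{kt-k}(1) characterizes. For $c(D)$ odd one does not even need this as an external input, since in the preceding proposition the two numbers $\overline{d}(B)$ and $c(B)-\overline{d}(B)$ are integers summing to the odd number $c(B)$, hence have opposite parity and unequal values, so their minimum is at most $\tfrac{c(B)-1}{2}$. For $c(D)$ even the diagram level inequality is the genuinely needed input, because $\overline{d}(B)$ might equal $c(B)/2$ and the preceding proposition alone yields only $c(B)/2$.

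Third, I would apply the equality characterization to sharpen this for odd crossing number. The diagram $D$ has $c(D)=(p-1)q\geq 4$ crossings, so it is not a one crossing diagram, and it is not a reduced alternating diagram of a $(2,k)$-torus knot: by Proposition \ref{umd} the unique minimal diagram of $W(p,q)$ is the $p$-strand weaving diagram with $p\geq 3$, which is not a two strand closure, so $W(p,q)$ is not a $(2,k)$-torus knot. Hence Theorem \ref{kt-k}(1) forbids $u(D)=\tfrac{c(D)-1}{2}$, and the inequality is strict, $u(B)=u(D)<\tfrac{(p-1)q-1}{2}$.

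Finally, I would convert these bounds into the stated estimate using that $u(B)$ is an integer. When $(p-1)q$ is even (equivalently $p$ odd), $\tfrac{(p-1)q-1}{2}$ is a half-integer, so the plain inequality $u(B)\leq\tfrac{(p-1)q-1}{2}$ already forces $u(B)\leq \tfrac{(p-1)q}{2}-1$; when $(p-1)q$ is odd (equivalently $p$ even, $q$ odd), $\tfrac{(p-1)q-1}{2}$ is an integer, and the strict inequality from the previous step lowers $u(B)$ by one further unit to $\tfrac{(p-1)q-3}{2}=\lfloor \tfrac{(p-1)q}{2}-1\rfloor$, which is again the asserted bound. The main obstacle is the second step: cleanly justifying the passage from the braid quantity $u(B)$ to the diagram quantity $u(D)$ and securing the diagram level inequality $u(D)\leq\tfrac{c(D)-1}{2}$ (especially in the even crossing number case) so that Theorem \ref{kt-k} can be brought to bear; once these are in place, the parity bookkeeping is routine.
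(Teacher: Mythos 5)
Your proof is correct and takes essentially the same route the paper intends for this corollary: pass to the closure diagram $D$ of $B_W(p,q)$ (identifying $u(B)$ with the diagram unknotting number $u(D)$), apply Taniyama's diagram-level bound $u(D)\le\frac{c(D)-1}{2}$ together with the equality characterization in Theorem \ref{kt-k}(1) (ruled out here since $D$ has at least four crossings and $W(p,q)$ is not a $(2,k)$-torus knot), and finish by integrality and parity. The paper states the corollary without a written proof, citing only the proposition $u(B)\le c(B)/2$ and Theorem \ref{kt-k}; your argument supplies exactly the details that citation presupposes, including the correct observation that for even $c(B)$ the closed-warping-degree proposition alone is insufficient and the sharper diagram-level inequality is the genuinely needed input.
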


\noindent Note that in Corollary~\ref{cor-ub}, the braid $B_W(p,q)$ is trivialized by crossing changes, whereas in Corollary~\ref{cor-uc-1}, its closure $W(p,q)$ is transformed into the trivial knot.
Regarding $u(B_W(p,q))$ for $q=1,2$, we have the following lemmas. 

\begin{lem}
We have $u(B_W(p,1))=0$ for any positive integer $p$. 
\label{u-r1}
\end{lem}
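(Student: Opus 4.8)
The plan is to bypass the warping-degree machinery and argue directly from the definition of $u(B)$. Recall that $u(B)$ is the minimum number of crossing changes needed to turn $B$ into a braid whose closure is trivial. Thus, if I can show that the closure of $B_W(p,1)$ is \emph{already} the trivial knot, then no crossing change is required and the value $u(B_W(p,1))=0$ follows at once (together with the trivial bound $u(B)\ge 0$). The case $q=1$ is in fact recorded in the Introduction, where it is observed that the closure of $B_W(p,1)$ always represents the unknot, so this is the only fact I need.

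To make that observation self-contained I would induct on $p$ via Markov destabilization. Writing $B_W(p,1)=\sigma_1\sigma_2^{-1}\cdots\sigma_{p-1}^{(-1)^p}$, the final letter $\sigma_{p-1}^{\pm1}$ occurs exactly once and the preceding prefix $\sigma_1\sigma_2^{-1}\cdots\sigma_{p-2}^{(-1)^{p-1}}$ involves only $\sigma_1,\dots,\sigma_{p-2}$; that is, it is exactly the braid $B_W(p-1,1)$ carried on the first $p-1$ strands. A single destabilization therefore identifies the closure of $B_W(p,1)$ with the closure of $B_W(p-1,1)$. Iterating down to $B_W(1,1)$, the empty one-strand braid, exhibits the closure as the unknot.

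The step needing the most care is checking that destabilization genuinely applies at each stage: one must confirm that the generator removed appears exactly once and that the remaining word lies in the parabolic subgroup generated by the lower-index generators, and that this is valid regardless of the sign of $\sigma_{p-1}^{\pm1}$ (destabilization holds for both $\sigma_{p-1}$ and $\sigma_{p-1}^{-1}$). Because the defining word of $B_W(p,1)$ is written with strictly increasing indices, each occurring once, both conditions are immediate, so this is only a mild obstacle.

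Finally, I would flag one subtlety so as not to mislead the reader: unlike the companion lemmas, this one is \emph{not} obtained from $\overline{d}(B_W(p,1))=0$ together with $u(B)\le\overline{d}(B)$. Indeed, since strand $1$ meets every crossing while alternating over and under (the signs of the $\sigma_j$ alternate), a short inspection of the admissible base-point orders shows that $\overline{d}(B_W(p,1))$ need not vanish for larger $p$. What forces the value to be $0$ is the genuine triviality of the closure, not a vanishing closed warping degree.
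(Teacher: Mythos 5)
Your proof is correct and is essentially the paper's proof in braid-theoretic language: each Markov destabilization you perform (removing the single occurrence of $\sigma_{p-1}^{\pm 1}$ and passing to $B_W(p-1,1)$) is exactly the Reidemeister move of type I on the closure that the paper applies iteratively until all crossings are gone. Your closing remark that $\overline{d}(B_W(p,1))$ need not vanish is a correct aside and matches the paper's route, which establishes triviality of the closure directly rather than via the vanishing of the closed warping degree.
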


\begin{proof}
Since $B_W(p,1)$ has only one round, we can apply the Reidemeister moves of type I on the closure iteratively until all the crossings are reduced. 
\end{proof}

\begin{lem}
We have 
$$u(B_W(p,2)) \leq \frac{p-1}{2}$$
for any odd number $p \geq 3$. 
\label{u-r2}
\end{lem}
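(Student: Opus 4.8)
The plan is to exploit the fact that in the braid diagram $B_W(p,2)=\left(B_W(p,1)\right)^2$ essentially all crossings are concentrated on two ``traveling'' strands, one per round. Label the strands $s_1,\dots,s_p$ by their positions at the top, as in the proof of Proposition \ref{pp-c}. In the first round the strand $s_1$ moves from position $1$ to position $p$, so it crosses each of $s_2,\dots,s_p$ exactly once, and in the second round the only further crossing on $s_1$ is a single crossing with $s_2$. First I would record, using the alternating pattern $\sigma_1\sigma_2^{-1}\sigma_3\cdots\sigma_{p-1}^{-1}$, which of these are over- and which are under-crossings for $s_1$. As $s_1$ proceeds it alternates over/under, so among its $p-1$ first-round crossings it lies under exactly $(p-1)/2$ of them; here the hypothesis that $p$ is odd makes $p-1$ even, so the over- and under-crossings split evenly. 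A short check at the single second-round crossing $\sigma_{p-1}^{-1}$ shows that $s_1$ already lies over $s_2$ there.

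Next I would change exactly these $(p-1)/2$ under-crossings of $s_1$ from the first round. After these changes $s_1$ lies over every other strand at all of its crossings. An arc that is over at each of its crossings can be pulled off by a sequence of Reidemeister II moves together with a planar isotopy, removing every crossing that involves $s_1$. What remains is a diagram whose only crossings are those of the second round not involving $s_1$, namely the crossings of $s_2$ with $s_3,\dots,s_p$; read on the strands $s_2,\dots,s_p$ this is a braid word of the form $\sigma_1^{\varepsilon_1}\sigma_2^{\varepsilon_2}\cdots\sigma_{p-2}^{\varepsilon_{p-2}}$, that is, a single ``round'' in which one strand passes once over or under each of the others. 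Finally I would argue that the closure of such a one-round braid is trivial: the generator $\sigma_{p-2}^{\varepsilon_{p-2}}$ occurs exactly once and may be removed by a Markov destabilization, after which $\sigma_{p-3}^{\varepsilon_{p-3}}$ occurs once, and so on down to the empty word; this is exactly the content of Lemma \ref{u-r1}. Hence the $(p-1)/2$ crossing changes turn $B_W(p,2)$ into a braid whose closure is the trivial knot, giving $u(B_W(p,2))\le (p-1)/2$.

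The main point requiring care is the reduction step rather than the bookkeeping. One must justify rigorously that making $s_1$ an overstrand lets us delete all of its crossings by isotopy, and that the leftover diagram is genuinely a single round on the remaining strands so that its closure is unknotted. I emphasize that this gain cannot come from the warping-degree inequality $u(B)\le\overline{d}(B)$ alone: for $B_W(p,2)$ the orderings following the closure give warping degree $p-1$ (for instance, for $p=3$ one checks directly that $\overline{d}(B_W(3,2))=2$ while $u=1$), so the factor-of-two improvement to $(p-1)/2$ is produced precisely by the Reidemeister~II removal of the overstrand and the subsequent destabilization, not by \emph{counting} warping crossing points.
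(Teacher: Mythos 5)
Your choice of crossing changes is exactly the paper's: the $(p-1)/2$ even\hbox{-}indexed (negative) crossings of the first round, i.e.\ precisely the crossings where the traveling strand $s_1$ is on the wrong side; and your observation that after these changes $s_1$ lies on the same side (over, in your convention; under, in the convention implicit in the paper's proof of Lemma~\ref{pp-o} --- the difference is only a mirror) at all $p$ of its crossings is correct. The gap is in the reduction step, the very one you flag as needing care: the claim that $s_1$ can then be freed of \emph{all} of its crossings by Reidemeister~II moves and planar isotopy, leaving the standard closure of a one-round braid on $p-1$ strands, is false for $p \geq 5$. First there is a parity obstruction: $s_1$ carries $p$ crossings, an odd number, whereas a Reidemeister~II move changes the number of crossings on $s_1$ by $0$ or $\pm 2$ and a planar isotopy changes it not at all. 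Second, and more fundamentally, no re-routing of the over-arc that keeps the rest of the diagram fixed can eliminate its crossings: in the closure diagram the maximal over-arc (the strand $s_1$ together with the two closure arcs attached to its ends, at positions $1$ and $p-1$) runs from the bottom of position $1$ to the top of position $p-1$, and one checks from the region structure that these two endpoints lie in \emph{different} regions of the complement of the remaining diagram, separated by the nested closure arcs at positions $2,\dots,p-2$. (For $p=3$ that set of arcs is empty, which is why the picture you describe does work there; $p=5$ is already obstructed.) Consequently, after the legitimate ``bridge move'' (replacing the over-arc by any arc drawn over everything it meets), what remains is \emph{not} the standard braid closure of $\sigma_1^{\varepsilon_1}\cdots\sigma_{p-2}^{\varepsilon_{p-2}}$: the re-routed arc necessarily retains crossings, and the closure pattern of the remaining strands is cyclically shifted rather than the standard one, so Lemma~\ref{u-r1} and Markov destabilization cannot be invoked as stated.

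The conclusion is of course true, but it requires an argument that never tries to extract $s_1$ in one piece. The paper's proof does this by an explicit induction: after the same crossing changes the braid is $(\sigma_1\sigma_2\cdots\sigma_{p-1})(\sigma_1\sigma_2^{-1}\cdots\sigma_{p-2}\sigma_{p-1}^{-1})$, and a short sequence of Reidemeister~III, I, II moves performed at the right-hand edge of the closure, followed by one destabilization, turns this into $(\sigma_1\cdots\sigma_{p-3})(\sigma_1\sigma_2^{-1}\cdots\sigma_{p-3}^{-1})$, the same two-round pattern with $p$ replaced by $p-2$; iterating removes all crossings. In other words, the Reidemeister moves must be interleaved with the structure of \emph{both} rounds, strand by strand, rather than applied to $s_1$ wholesale. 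Your closing remark --- that the factor-of-two gain cannot come from the closed warping degree, e.g.\ $\overline{d}(B_W(3,2))=2$ while $u=1$ --- is correct and is a genuinely useful observation; but the mechanism producing the gain is the paper's iterated local reduction, not the removal of an over-strand followed by destabilization.
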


\begin{proof}
Apply a crossing change at the $2, 4, \dots , (p-1)$th crossings in the first round. 
Then the closure of $( \sigma_1 \sigma_2 \cdots \sigma_{p-2}\sigma_{p-1})( \sigma_1 \sigma_2^{-1} \cdots \sigma_{p-3}^{-1} \sigma_{p-2}\sigma_{p-1}^{-1})$ can be applied Reidemeister moves of type I\hspace{-0.5pt}I\hspace{-0.5pt}I, I, and I\hspace{-0.5pt}I on the right-hand side, as shown in Figure \ref{fig-r3-r1}, to obtain the closure of $( \sigma_1 \sigma_2 \cdots \sigma_{p-3} \sigma_{p-2})( \sigma_1 \sigma_2^{-1} \cdots \sigma_{p-3}^{-1} )$. 
Next, apply a Reidemeister move of type I. 
Then the braid representation will be $( \sigma_1 \sigma_2 \cdots \sigma_{p-3})( \sigma_1 \sigma_2^{-1} \cdots \sigma_{p-3}^{-1} )$. 
Repeat the set of Reidemeister moves iteratively until all the crossings are reduced. 
\begin{figure}[h]
\centering
\includegraphics[width=9cm]{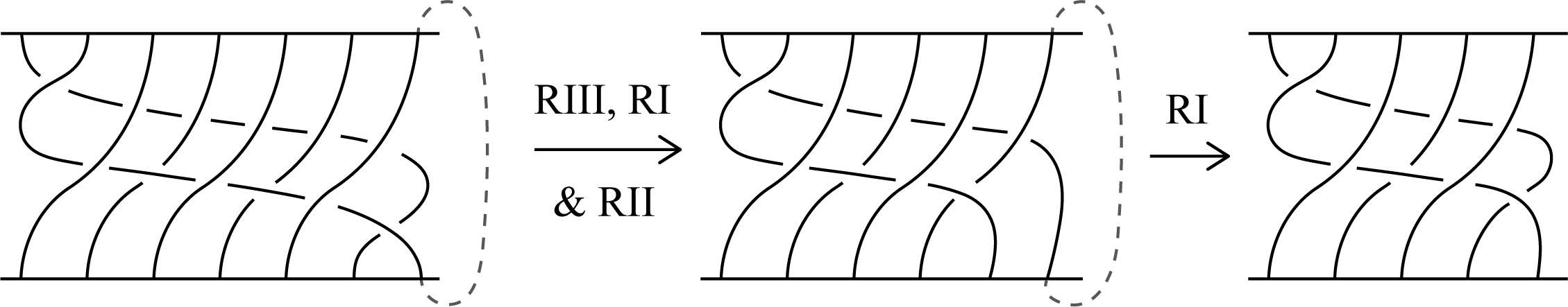}
\caption{Transformations.}
\label{fig-r3-r1}
\end{figure} 
\end{proof}

\section{Proofs of Theorems \ref{thm-42-1}, \ref{thm-42-2}} 
\label{section-pf}

In this section, proofs of Theorems \ref{thm-42-1} and \ref{thm-42-2} are given. 

\medskip 
\noindent {\it Proof of Theorem \ref{thm-42-1}.} 
The braid diagram $B_W(p,np+r)$ can be decomposed into $n$ braids of $B_W(p,p)$ and a single $B_W(p,r)$. 
After the crossing changes at the warping crossing points shown in the proof of Lemma \ref{pp-o}, each $B_W(p,p)$ part will be equivalent to the trivial braid by Lemma \ref{d0-t}. 
For the remaining part $B_W(p,r)$, we have the inequality of Corollary \ref{cor-ub}. 
Hence, $u(W(p,np+r)) \leq u(B_W(p,np+r)) \leq n \times d(B_W(p,p)) +u(B_W(p,r))$ and we have the inequality. \qed 

\medskip 
\begin{rem}
In the same way, we obtain the following inequality from Lemma \ref{pp-e} and Corollary \ref{cor-ub} for positive even number $p$:
$$u(W(p,np+r)) \leq \frac{np(p-1)}{2}+\frac{(p-1)r}{2}-1.$$
In fact, the inequality is trivial for even number $p$ since it is equivalent to $u(W(p,np+r)) \leq c(W(p,np+r))/2 -1$, the inequality of Corollary \ref{cor-uc-1}. 
\end{rem}

\noindent Next, we show Theorem \ref{thm-42-2}. 

\medskip 
\noindent {\it Proof of Theorem \ref{thm-42-2}}. 
In the same way to the proof of Theorem \ref{thm-42-1}, the first inequality follows Lemmas \ref{pp-o} and \ref{u-r1}. 
The second one follows Lemmas \ref{pp-o} and \ref{u-r2}. 
The third one follows Lemmas \ref{pp-e} and \ref{u-r1}.   \qed

\medskip 
Murasugi~\cite{KM65} proved that the half of the absolute value of the knot signature gives a lower bound of the unknotting number.
From~\cite{MS}, we know the following formula for the signature of weaving links.
\begin{thm}[\cite{MS}]
For the weaving link $W(p,q)$, the signature is given by
$$ \sigma(W(p,q)) =
\begin{cases}
-q+1, & \text{if } p \text{ is even},\\
0, & \text{if } p \text{ is odd}.
\end{cases}$$
\label{thm-MS}
\end{thm}
\begin{rem}
By applying Theorem~\ref{thm-MS} to the weaving knots considered in Theorems~\ref{thm-42-1} and \ref{thm-42-2}, we obtain that for an even integer $p$,
$$ u(W(p,np+1)) \geq \frac{np}{2} $$
\end{rem}
\noindent and because the signature vanishes for all the other cases, it does not provide any lower bound of the unknotting number for them.
From this lower bound and Theorem~\ref{thm-42-2}, it follows that $\frac{np}{2}\leq u(W(n,np+1)) \leq\frac{np(p-1)}{2}$ if $p$ is even and $n$ is any positive integer.

\section{Region unknotting number}
\label{section-rcc}

In this section, we investigate the region unknotting numbers for weaving knots and links. 
Recently, the following inequality was shown in \cite{CS}. 

\begin{thm}[\cite{CS}]
For any knot $K$, 
$$u_R(K) \leq \frac{c(K)+1}{2}.$$ 
\label{thm-CS}
\end{thm}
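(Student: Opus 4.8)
The plan is to reduce the statement to a counting problem over $\mathbb{F}_2$ and then optimize. Fix a minimal diagram $D$ of $K$ with $c=c(K)$ crossings, so that $D$ has $F=c+2$ regions by Euler's formula. Recall from \cite{ASr} that a region crossing change is an unknotting operation for knots: encoding a set of regions by a vector in $\mathbb{F}_2^{F}$ and the crossings by the coordinates of $\mathbb{F}_2^{c}$, the incidence map $M\colon \mathbb{F}_2^{F}\to\mathbb{F}_2^{c}$ that records which crossings lie on the boundaries of the chosen regions, counted modulo two, is surjective. Performing region crossing changes on a set of regions $x$ realizes exactly the crossing-change pattern $Mx$, and $\ker M$ is the two-dimensional space spanned by the two checkerboard-colour vectors $\mathbf{B}$ and $\mathbf{W}$, since a region crossing change on every region of one colour changes each crossing twice and is therefore trivial.

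First I would produce crossing-change targets that unknot $D$. Choosing a base point on $D$ and changing all warping crossing points yields a descending, hence trivial, diagram; changing the complementary (non-warping) set instead yields an ascending, hence trivial, diagram. This gives two explicit targets $t_0=\mathbf{1}_{T_0}$ and $t_1=t_0+\mathbf{1}$ in $\mathbb{F}_2^{c}$, where $\mathbf{1}$ is the all-crossings vector; both lie in the image of $M$ by surjectivity, and their preimages $M^{-1}(t_0)$ and $M^{-1}(t_1)$ are distinct cosets of $\ker M$.

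Next I would bound the number of region crossing changes needed to realize a chosen target $t$. Any solution of $Mx=t$ may be altered by adding $\mathbf{B}$ and/or $\mathbf{W}$ without changing its effect, and these two moves independently replace the set of chosen regions of each colour by its complement within that colour class. Writing $B$ and $W=c+2-B$ for the sizes of the two colour classes, minimizing the Hamming weight of $x$ over this coset yields a realization using at most $\lfloor B/2\rfloor+\lfloor W/2\rfloor$ region crossing changes. A short parity check shows that when $c$ is odd, and also when $c$ is even with both colour classes of odd size, this quantity equals $\lceil c/2\rceil$, which is exactly the asserted integer bound $\frac{c+1}{2}$; so in these cases we are done.

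The main obstacle is the remaining case, namely $c$ even with both colour classes of even size, where the naive count gives $\frac{c}{2}+1$ and one must save a single region crossing change. Here I would exploit the freedom between the descending target $t_0$ and the ascending target $t_1=t_0+\mathbf{1}$, which sit in different cosets of $\ker M$, and I would show that at least one of them admits a realization of weight at most $\frac{c}{2}$. The key point will be a parity computation for the weight of a preimage of the all-crossings vector $\mathbf{1}$ under $M$, tied to the count $F=c+2$ and to the evenness of both colour classes; comparing the two cosets then removes the extra unit. Carrying out this parity bookkeeping, and checking that the descending/ascending dichotomy always lands one target in the favourable coset, is the delicate step, while everything preceding it is the routine linear algebra described above.
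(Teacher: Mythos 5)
First, a point of reference: the paper itself gives \emph{no} proof of this statement --- Theorem \ref{thm-CS} is quoted from \cite{CS}, so there is no internal argument to compare yours against, and your proposal must stand on its own. Its framework is sound and is indeed the natural one (and, judging from the title of \cite{CS}, the one that paper develops): by \cite{ASr} and the known computation of the kernel, the incidence map $M\colon\mathbb{F}_2^{c+2}\to\mathbb{F}_2^{c}$ of a knot diagram is surjective with $\ker M$ spanned by the two checkerboard vectors; the descending and ascending targets unknot; and minimizing over the four solutions in a coset gives the bound $\lfloor B/2\rfloor+\lfloor W/2\rfloor$. Your case analysis of that bound is also correct: it settles the theorem except when $c$ is even and $B,W$ are both even (a case that really occurs for minimal diagrams, e.g.\ $W(3,5)$ has $B=W=6$).

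The genuine gap is that this remaining case --- the only one in which the asserted inequality says more than the routine count --- is exactly what you leave open, and the mechanism you propose for it does not hold up. In that case the pair $\pi(t)\in\mathbb{F}_2^2$ of parities (black count, white count) of a solution of $Mx=t$ is well defined and additive in $t$, a target is ``bad'' only if its solutions have counts exactly $(B/2,W/2)$, and both of your targets being bad forces $\pi(\mathbf{1})=(0,0)$; so your plan amounts to proving $\pi(\mathbf{1})\neq(0,0)$. But $\pi(\mathbf{1})$ is \emph{not} ``tied to $F=c+2$ and the evenness of both colour classes'': it can be computed outright. The set $\{r : n(r)\equiv 1,2 \pmod{4}\}$, where $n$ is the Alexander (winding) numbering of the regions, is a preimage of $\mathbf{1}$ (at every crossing the four corner values are $k,k+1,k+1,k+2$, and an odd number of them lie in residues $1,2$), so $\pi(\mathbf{1})=(N_1 \bmod 2,\,N_2\bmod 2)$ with $N_k$ the number of regions having $n(r)\equiv k \pmod 4$. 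This is an independent datum of the diagram, not a function of $c,B,W$: for the minimal diagram of $W(3,5)$ one has $(N_0,N_1,N_2,N_3)=(1,5,5,1)$ and $\pi(\mathbf{1})=(1,1)$ (indeed the two outer regions already realize $\mathbf{1}$), while diagrams with all four $N_k$ even --- hence $B,W$ both even and $\pi(\mathbf{1})=(0,0)$ --- are easily produced with Reidemeister moves. When $\pi(\mathbf{1})=(0,0)$ the descending and ascending cosets have identical weight parities and comparing them saves nothing. So to complete the argument you would have to prove that a \emph{minimal} diagram can never satisfy ($B,W$ even, $\pi(\mathbf{1})=(0,0)$, descending target bad) --- a substantive claim for which you give no argument and which your bookkeeping cannot decide --- or else bring in further unknotting targets (for instance, moving the base point past a pass of a crossing $x$ changes the target by $e_x$ and hence $\pi$ by $\pi(e_x)$), or a genuinely different idea.
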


\noindent At the moment, no examples of a knot $K$ or a knot diagram $D$ satisfying the equality $u_R(K) = (c(K)+1)/2$ or $u_R(D) = (c(D)+1)/2$ have been found. 
The region unknotting number of some specific types of knots has been studied by the first author in \cite{ASr} and by Siwach and Prabhakar in \cite{SMt, SM2} and sharp upper bounds have been found for them. 
For instance, the region unknotting numbers of all prime knots with up to $8$ crossings are determined in~\cite{ASr}.
For upper bounds of the region unknotting numbers of torus knots and $2$-bridge knots in certain cases, one may refer to the Siwach and Prabhakar's papers ~\cite{SMt, SM2}.
In this section we study the region unknotting number of weaving knots, and prove Theorem~\ref{thm-r12} in Subsection \ref{subsection-r-k}. 
We also consider the region unlinking number of weaving links in Subsection \ref{subsection-r-l}. 
From Theorem \ref{thm-CS}, we have the following inequality for weaving knots. 

\begin{cor}
For each weaving knot $W(p,q)$, we have 
$$u_R(W(p,q)) \leq \frac{(p-1)q+1}{2}.$$
\label{cor-ur-c12}
\end{cor}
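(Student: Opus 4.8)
The plan is to derive this bound as an immediate consequence of Theorem \ref{thm-CS} together with the known crossing number of weaving knots. First I would observe that since we are dealing with a weaving \emph{knot}, we have $\gcd(p,q)=1$, so $W(p,q)$ is a one-component link and Theorem \ref{thm-CS} applies directly with $K = W(p,q)$. This yields
$$u_R(W(p,q)) \leq \frac{c(W(p,q))+1}{2}.$$

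The second ingredient is the identification of the crossing number. The braid diagram $B_W(p,q)$ consists of $q$ rounds, each contributing $p-1$ crossings, and it is a reduced alternating diagram. By Tait's first conjecture (invoked in the discussion following Proposition \ref{umd}), every reduced alternating diagram of an alternating knot realizes the crossing number; this is precisely the content recorded alongside Proposition \ref{umd}, giving $c(W(p,q)) = (p-1)q$. Substituting this equality into the displayed inequality above produces
$$u_R(W(p,q)) \leq \frac{(p-1)q+1}{2},$$
which is the claimed bound.

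There is essentially no genuine obstacle in this corollary: it is a one-line substitution once the two inputs are in hand. The only point that could require care is the crossing-number identification, but this has already been fully established in the preliminary sections (Proposition \ref{umd} and the Tait conjecture discussion), so no further argument is needed here. I would therefore present the proof in the compressed form above, citing Theorem \ref{thm-CS} for the general region-unknotting inequality and Proposition \ref{umd} for the value $c(W(p,q)) = (p-1)q$.
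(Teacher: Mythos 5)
Your proposal is correct and matches the paper's approach exactly: the corollary is stated there as an immediate consequence of Theorem \ref{thm-CS}, with the crossing number $c(W(p,q)) = (p-1)q$ coming from the fact that the closure of $B_W(p,q)$ is a reduced alternating diagram (hence minimal by the Tait conjecture, as recorded around Proposition \ref{umd}). Nothing further is needed.
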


\noindent Recall that each weaving knot has a unique minimal diagram, as shown in Proposition \ref{umd}.

\subsection{Region unknotting number of weaving knots}
\label{subsection-r-k}

For the closure $D$ of a braid diagram $B_W(p,q)$, we call the $j$th crossing from the left-hand side in the $i$th round $c^i_j$. 
We call the $j$th region between $i$th and $(i+1)$th round $r^i_j$, and the regions outside the braid $s_1$ and $s_2$, as shown in Figure \ref{fig-w34}.
\begin{figure}[ht]
\centering
\includegraphics[width=3cm]{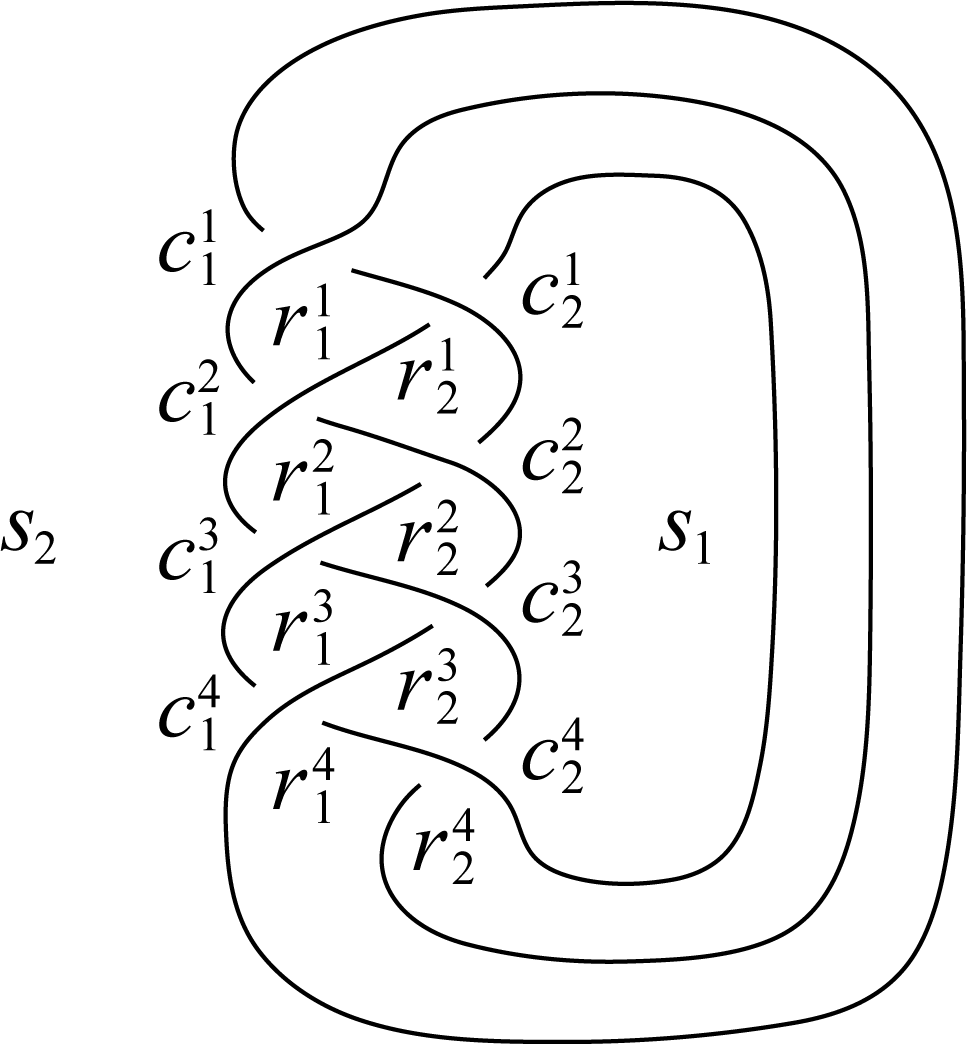}
\caption{Crossings $c^i_j$ and regions $r^i_j$, $s_i$ of $D$.}
\label{fig-w34}
\end{figure}

For a braid diagram $B$, we define the {\it region warping degree} $d_R(B)$ of $B$ to be the minimum number of region crossing changes on regions bounded by strands which transform $B$ into a braid diagram of warping degree zero. 
If $B$ cannot be transformed into a warping degree zero braid by any region crossing changes as shown in Figure \ref{fig-w23}, we assume $d_R(B)$ is not defined for $B$. 
We note that when we consider $d_R(B_W(p,q))$, we cannot use the regions $r^q_j$ ($j=1,2, \dots ,p-1$), $s_1$ and $s_2$. 
We have the following lemma. 
\begin{figure}[ht]
\centering
\includegraphics[width=0.8cm]{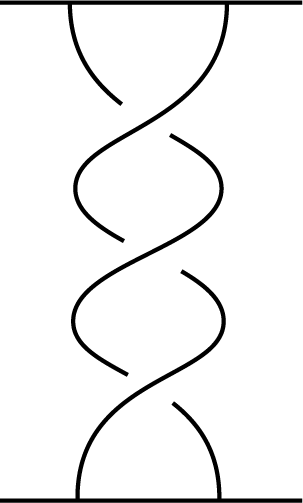}
\caption{A braid diagram $B$ such that $d_R(B)$ is undefined. }
\label{fig-w23}
\end{figure}

\begin{lem}
When $p$ is a positive odd number, 
$$d_R(B_W(p,p)) \leq \frac{p^2 -1}{4}.$$
\label{prop-drpp}
\end{lem}

\begin{proof}
Take $\mathbf{b}=(b_1, b_3, \dots , b_p, b_2, b_4, \dots , b_{p-1})$ as a sequence of base points of $B=B_W(p,p)$, which is same to the proof of Lemma \ref{pp-o}. 
By that proof, the warping crossing points of $B_{\mathbf{b}}$ are $c^i_j$ for all odd numbers $i$, $j$ satisfying $i \leq j$, $1 \leq i \leq p-2$ and $1 \leq j \leq p-i-1$, and $c^i_j$ for all even numbers $i$, $j$ satisfying $i \geq j$, $2 \leq i \leq p-1$ and $p+1-i \leq j \leq p-1$. 
When $p \equiv 1 \pmod{4}$, take a set of regions $R_1$ as follows. 
\begin{align*}
R_1= \{ r^i_j \ | \ & i \not\equiv 0 \pmod{4}, \\
& \text{when } i \equiv 1 \pmod{4}, \ j= 1, 2, 5, 6, \dots , p-i-3, p-i-2, \\
& \text{when } i \equiv 2 \pmod{4}, \ j= 2, 3, 6, 7, \dots , p-3, p-2, \\
& \text{when } i \equiv 3 \pmod{4}, \ j= p-i+1, p-i+2, \dots , p-2, p-1 \}.
\end{align*}
When $p \equiv 3 \pmod{4}$, take a set of regions $R_2$ as follows. 
\begin{align*}
R_2= \{ r^i_j \ | \ & i \not\equiv 3 \pmod{4}, \\
& \text{when } i \equiv 1 \pmod{4}, \ j= 1, 2, 5, 6, \dots , p-2, p-1, \\
& \text{when } i \equiv 2 \pmod{4}, \ j= 2, 3, 6, 7, \dots , p-i-3, p-i-2, \\
& \text{when } i \equiv 0 \pmod{4}, \ j= p-i+1, p-i+2, \dots , p-3, p-2 \}.
\end{align*}
Then by region crossing changes at the regions in $R_1$ or $R_2$, all the warping crossing points of $B_{\mathbf{b}}$ are changed and $B$ becomes warping degree zero. 
We can easily check that the number of regions in $R_1$ or $R_2$ is same to the number of the warping crossing points from the diagram, as shown in Figure \ref{fig-w99}. 
\begin{figure}[ht]
\centering
\includegraphics[width=12cm]{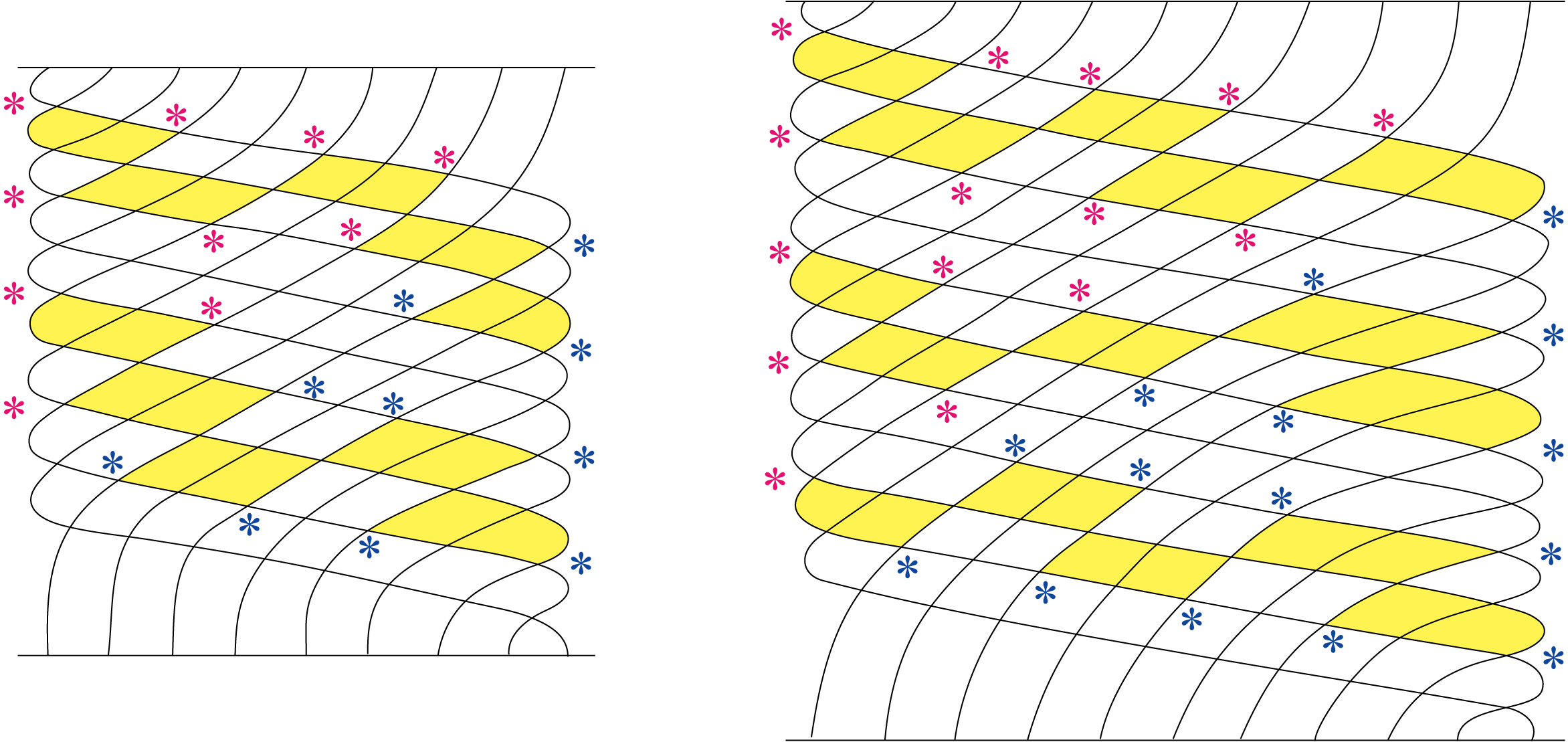}
\caption{$B_W(9,9)$ and $B_W(11,11)$. }
\label{fig-w99}
\end{figure} 
\end{proof}

\noindent We define the {\it region unlinking number} $u_R(B)$ of a braid diagram $B$ to be the minimum number of region crossing changes on $B$ which are needed to transform $B$ into a braid diagram whose closure represents a trivial knot or link. 
We note that $u_R(B)$ is undefined for some braid diagrams. 
We have the following lemma. 

\begin{lem}
When $p$ is a positive odd number, 
$$u_R(B_W(p,2)) \leq \frac{p-1}{2}.$$
\label{urb-p2}
\end{lem}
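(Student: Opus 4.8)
The plan is to prove $u_R(B_W(p,2)) \leq \frac{p-1}{2}$ for odd $p$ by explicitly constructing a set of region crossing changes, located in the first round only, that mimics the effect of the ordinary crossing changes used in Lemma~\ref{u-r2}. Recall that in Lemma~\ref{u-r2} the unknotting was achieved by changing exactly the crossings $\sigma_2, \sigma_4, \dots, \sigma_{p-1}$ in the first round, i.e.\ the $\frac{p-1}{2}$ crossings $c^1_2, c^1_4, \dots, c^1_{p-1}$, after which a sequence of Reidemeister moves (III, I, II, repeated) reduces the closure to the trivial knot. So the target set of crossings to be switched is already known; the task is purely to realize this switch by region crossing changes rather than individual crossing changes, using at most $\frac{p-1}{2}$ regions, and while respecting the constraint (noted just before the lemma) that the regions $r^q_j$, $s_1$, $s_2$ are forbidden for $B_W(p,q)$ — here $q=2$, so we may not use the outermost regions nor those in the second (last) round.

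First I would fix the available interior regions of $B_W(p,2)$: these are $r^1_1, r^1_2, \dots, r^1_{p-1}$, the regions lying between the first and second rounds, together with any bounded regions internal to a single round. The key combinatorial observation to exploit is that a region crossing change on a 4-gon region $r^1_j$ switches precisely the four crossings on its boundary, while near the top and bottom the regions are 3-gons (by the region-count established in the primality proof: two $q$-gons, $2q$ triangles, the rest 4-gons), switching three crossings. Next I would choose a subset $\mathcal{R} \subseteq \{r^1_1, \dots, r^1_{p-1}\}$ of size $\frac{p-1}{2}$ so that, summing the boundary-switch contributions modulo $2$ over all regions in $\mathcal{R}$, every crossing in the target set $\{c^1_2, c^1_4, \dots, c^1_{p-1}\}$ is switched an odd number of times and every other crossing of the whole diagram is switched an even (hence net-zero) number of times. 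Because adjacent regions $r^1_j$ and $r^1_{j+1}$ share crossings, the overlaps between chosen regions must be arranged to cancel on the unwanted crossings; I expect an alternating or every-other-region pattern to do this, and I would verify it on a worked example ($p=5$ or $p=7$) before asserting it in general, exactly in the spirit of the figure-checking used in Lemma~\ref{prop-drpp}.

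I would then conclude as follows: after performing the region crossing changes on $\mathcal{R}$, the resulting diagram has exactly the same crossing information as the diagram obtained in Lemma~\ref{u-r2} after its ordinary crossing changes; hence the same Reidemeister sequence reduces its closure to the trivial knot. Therefore $u_R(B_W(p,2)) \leq |\mathcal{R}| = \frac{p-1}{2}$. The main obstacle I anticipate is the bookkeeping at the boundary of the first round: the topmost and bottommost regions are triangles rather than 4-gons, so their boundary-switch patterns differ from the generic interior case, and I must confirm that a valid cancelling subset of the correct size $\frac{p-1}{2}$ still exists under the prohibition on using $r^2_j$, $s_1$, $s_2$. The cleanest way to dispatch this is to exhibit the subset $\mathcal{R}$ explicitly (as an indexed list of $r^1_j$, analogous to the sets $R_1, R_2$ in Lemma~\ref{prop-drpp}) and then simply read off from the lattice pattern that it switches the target crossings and nothing else, deferring the full verification to a figure.
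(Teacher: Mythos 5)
Your plan has a genuine gap at its central step: the set of crossings you want to switch cannot be realized by the allowed regions, no matter which subset you choose. For $B_W(p,2)$ the only usable regions are $r^1_1,\dots ,r^1_{p-1}$ (the regions $r^2_j$, $s_1$, $s_2$ are forbidden), and their boundary crossings are $\{c^1_1,c^1_2,c^2_1\}$ for $r^1_1$, $\{c^1_j,c^1_{j+1},c^2_{j-1},c^2_j\}$ for $2\le j\le p-2$, and $\{c^1_{p-1},c^2_{p-2},c^2_{p-1}\}$ for $r^1_{p-1}$ (incidentally, the 3-gons are the leftmost and rightmost regions, adjacent to $s_1$ and $s_2$, not ``top and bottom'' ones). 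Now suppose a subset $\mathcal{R}$ of these regions, with indicator $x_j\in\{0,1\}$ for $r^1_j$, switches no crossing of the second round, as your plan requires. The crossing $c^2_{p-1}$ lies only on $r^1_{p-1}$, so $x_{p-1}=0$; and each $c^2_j$ with $1\le j\le p-2$ lies on exactly the two regions $r^1_j$ and $r^1_{j+1}$, so $x_j=x_{j+1}$ for all such $j$. Hence all $x_j=0$ and $\mathcal{R}=\emptyset$. In other words, every non-empty admissible set of region crossing changes necessarily alters the second round, so no choice of $\mathcal{R}$ switches exactly $\{c^1_2,c^1_4,\dots ,c^1_{p-1}\}$ and nothing else; the mod-2 cancellation you hope to arrange does not exist, and no worked example or figure can rescue it.

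The paper's proof succeeds precisely by abandoning the requirement of reproducing the crossing pattern of Lemma \ref{u-r2}. It accepts that the second round changes too, and chooses a ``two on, two off'' pattern (not an every-other pattern): for $p\equiv 1\pmod 4$ the regions $r^1_j$ with $j=2,3,6,7,\dots ,p-3,p-2$, which turn the braid into $(\sigma_1\sigma_2\cdots\sigma_{p-1})(\sigma_1^{-1}\sigma_2^{-1}\cdots\sigma_{p-1}^{-1})$, and for $p\equiv 3\pmod 4$ the regions with $j=1,4,5,8,9,\dots ,p-3,p-2$, which yield $(\sigma_1^{-1}\sigma_2\cdots\sigma_{p-1})(\sigma_1^{-1}\sigma_2^{-1}\cdots\sigma_{p-1}^{-1})$. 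These braids are different from the one obtained in Lemma \ref{u-r2}, but their closures are still reduced to the trivial knot by the same kind of Reidemeister-move argument. So the correct repair of your proof is not better bookkeeping on the boundary 3-gons, but a relaxed target: choose the regions so that the resulting braid, including whatever happens in round two, has unknot closure.
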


\begin{proof}
When $p \equiv 1 \pmod{4}$, apply region crossing changes at the $\frac{p-1}{2}$ regions $r^1_j$ for $j=2,3, 6,7, \dots , p-3, p-2$, as shown in Figure \ref{fig-w92}. 
Then we obtain a braid $(\sigma_1 \sigma_2 \cdots \sigma_{p-1})(\sigma_1^{-1} \sigma_2^{-1} \cdots \sigma_{p-1}^{-1})$, and similarly to the proof of Lemma \ref{u-r2}, all the crossings can be reduced by Reidemeister moves in the closure. 
When $p \equiv 3 \pmod{4}$, apply region crossing changes at the $\frac{p-1}{2}$ regions $r^1_j$ for $j=1,4,5,8,9, \dots , p-2$. 
Then we obtain a braid $(\sigma_1^{-1} \sigma_2 \cdots \sigma_{p-1})(\sigma_1^{-1} \sigma_2^{-1} \cdots \sigma_{p-1}^{-1})$, and all the crossings can be reduced by Reidemeister moves in the closure, too. 
\begin{figure}[ht]
\centering
\includegraphics[width=8cm]{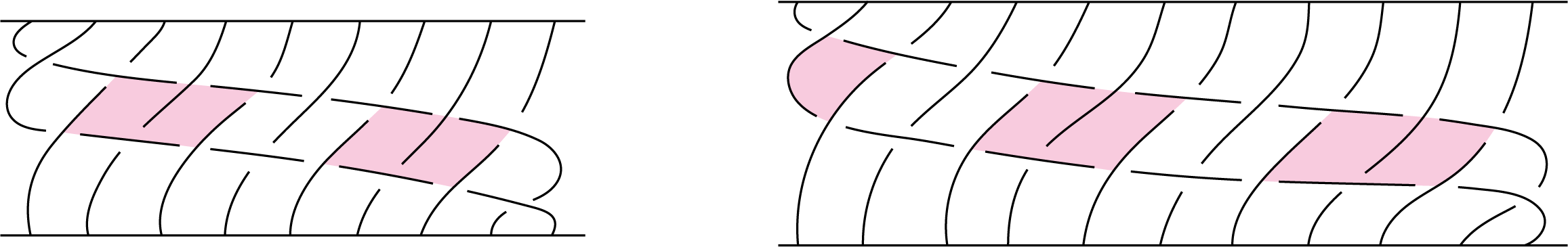}
\caption{$B_W(9,2)$ and $B_W(11,2)$. }
\label{fig-w92}
\end{figure} 
\end{proof}

\noindent Since $u_R(K) \leq u_R(D) \leq u_R(B)$ holds when $D$ is a minimal diagram of a knot $K$ and the closure of a braid diagram $B$, and $c(W(p,2))=2(p-1)$, we have the following inequality about the relation between the region unknotting number and the crossing number of the weaving knots $W(p,2)$. 

\begin{cor}
For the weaving knot $K=W(p,2)$ with any odd integer $p \geq 3$, 
$$u_R(K) \leq \frac{c(K)}{4}.$$
\end{cor}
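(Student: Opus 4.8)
The plan is to obtain the bound as a direct consequence of Lemma~\ref{urb-p2}, combined with the crossing number formula for weaving knots and the monotonicity of the region unknotting number under passage from a braid diagram to its closure. Since all of the substantive combinatorial work has already been carried out in Lemma~\ref{urb-p2}, this final step amounts to assembling the established facts and performing the arithmetic.

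First I would record that, by Proposition~\ref{umd}, the weaving knot $K=W(p,2)$ has a unique minimal diagram $D$ on $S^2$, namely the closure of the braid diagram $B_W(p,2)$, and that its crossing number is $c(K)=c(W(p,2))=(p-1)\cdot 2 = 2(p-1)$. Next I would invoke the chain of inequalities $u_R(K)\le u_R(D)\le u_R(B_W(p,2))$: the first inequality holds because $u_R(K)$ is the minimum of $u_R(D')$ over all minimal diagrams $D'$ of $K$ and $D$ is one such diagram; the second holds because the regions available in the definition of $u_R(B_W(p,2))$ are exactly those of $D$ other than the forbidden $r^q_j$, $s_1$, $s_2$, so any witnessing sequence of region crossing changes on the braid induces a legitimate sequence of region crossing changes on the closure $D$. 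Finally, applying Lemma~\ref{urb-p2} and substituting $c(K)=2(p-1)$ yields
$$u_R(K)\le u_R(B_W(p,2)) \le \frac{p-1}{2} = \frac{2(p-1)}{4} = \frac{c(K)}{4},$$
which is the desired bound.

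I do not expect any serious obstacle, precisely because this corollary is designed to repackage Lemma~\ref{urb-p2}. The only point requiring a moment of care is the justification of $u_R(D)\le u_R(B_W(p,2))$, that is, verifying that a sequence of region crossing changes performed on the open braid diagram genuinely corresponds to a valid sequence on the closed diagram $D$; as noted above, this follows because the admissible braid regions form a subset of the regions of $D$ and these already suffice in Lemma~\ref{urb-p2}. I would also remark that the hypothesis that $p$ is odd is exactly what guarantees $\gcd(p,2)=1$, so that $K=W(p,2)$ is a genuine weaving knot and the invocation of Proposition~\ref{umd} is legitimate.
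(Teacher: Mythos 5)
Your proof is correct and follows exactly the paper's own route: the chain $u_R(K)\le u_R(D)\le u_R(B_W(p,2))$ combined with Lemma~\ref{urb-p2} and the crossing number $c(W(p,2))=2(p-1)$. Your additional care in justifying the step $u_R(D)\le u_R(B_W(p,2))$ (admissible braid regions being a subset of the regions of $D$) is a welcome elaboration of what the paper asserts implicitly, but the argument is the same.
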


\noindent Now we prove Theorem \ref{thm-r12}. 

\medskip 
\noindent {\it Proof of Theorem \ref{thm-r12}.} 
Each weaving knot $W(p,q)$ has a unique minimal diagram, which is the closure of $B_W(p,q)$ by Proposition \ref{umd}. 
In the same way to the proof of Theorem \ref{thm-42-1}, decompose $B_W(p, np+r)$ into $n$ braids $B_W(p,p)$ and a single $B_W(p,r)$ for $r=1,2$. 
The inequalities follow Lemmas \ref{prop-drpp}, \ref{urb-p2}, \ref{d0-t}, and \ref{u-r1}. \qed

\subsection{Region unlinking number of weaving links}
\label{subsection-r-l}

A link $L=K_1 \cup K_2 \cup \dots \cup K_r$ of $r$ components is said to be {\it proper} if 
$$\sum_{j \neq i} lk(K_i, K_j) \equiv 0 \pmod{2}$$
holds for each $i=1, 2, \dots , r$, where $lk(K_i, K_j)$ denotes the linking number of $K_i$ and $K_j$. 
It is shown by Cheng in \cite{ZC} that any diagram of a link $L$ can be transformed into a diagram of a trivial link by a finite number of region crossing changes if and only if $L$ is a proper link. 
By Proposition \ref{pp-c}, we have the following corollary. 

\begin{cor}
A weaving link $W(p,np)$ is proper if and only if $p$ is odd or $n$ is even. 
\end{cor}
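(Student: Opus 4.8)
The plan is to translate the properness condition into a computation of the pairwise linking numbers modulo $2$ and to read those off from the crossing data already contained in Proposition \ref{pp-c}. First I would pin down the component structure of $W(p,np)$. Writing $B_W(p,np)=(B_W(p,p))^n$ and noting that a direct check shows the underlying permutation of $B_W(p,1)$ to be a single $p$-cycle $\rho$, the permutation of $B_W(p,np)$ is $\rho^{np}=\mathrm{id}$, since $\rho$ has order $p$ and $p \mid np$. Hence $B_W(p,np)$ is a pure braid, and its closure has exactly $p$ components $K_1,\dots,K_p$, where $K_i$ is the closure of the single strand $s_i$ (consistent with $\gcd(p,np)=p$). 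This also records that each of the $n$ stacked copies of $B_W(p,p)$ repeats the same crossing pattern between any fixed pair of strands.

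Next I would compute $lk(K_i,K_j)\bmod 2$ for $i\neq j$ using Proposition \ref{pp-c}. In each copy of $B_W(p,p)$ the strands $s_i$ and $s_j$ meet in exactly two crossings, arising from the generators $\sigma_{j-i}$ and $\sigma_{p-(j-i)}$ (in rounds $i$ and $j$). Since in $B_W(p,1)$ the generator $\sigma_k$ carries exponent $(-1)^{k+1}$, and the crossing sign coincides with the generator-exponent sign for co-oriented braid strands, the product of the two crossing signs is $(-1)^{(j-i)+1}(-1)^{(p-(j-i))+1}=(-1)^p$. So the two crossings cancel when $p$ is odd and reinforce when $p$ is even, matching the over/under dichotomy of Proposition \ref{pp-c}. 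Summing over the $n$ identical copies yields, for every pair $i\neq j$,
\[
lk(K_i,K_j)\equiv \begin{cases} 0 & p \text{ odd},\\ n & p \text{ even},\end{cases} \pmod 2 .
\]

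Finally I would substitute into the properness condition. For each $i$,
\[
\sum_{j\neq i} lk(K_i,K_j)\equiv (p-1)\,\varepsilon \pmod 2,
\]
where $\varepsilon$ equals $0$ for odd $p$ and $n$ for even $p$. When $p$ is odd this is $0$, so $W(p,np)$ is always proper. When $p$ is even, $p-1$ is odd, so the sum is $\equiv n \pmod 2$, and properness holds precisely when $n$ is even. Combining the two cases gives that $W(p,np)$ is proper if and only if $p$ is odd or $n$ is even.

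I expect the main obstacle to be the middle step: carefully justifying that the two crossings between a fixed pair of strands reinforce or cancel according to the parity of $p$. Proposition \ref{pp-c} phrases this in terms of which strand lies over, so the care lies in converting that over/under statement, together with the generator exponents $(-1)^{k+1}$ of $B_W(p,1)$, into the signed contribution to the linking number. Once the crossing signs are correctly identified and the two-crossing cancellation/reinforcement is established, the remaining parity bookkeeping is routine.
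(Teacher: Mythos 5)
Your proposal is correct and follows essentially the same route as the paper: both translate properness into the pairwise linking numbers $lk(K_i,K_j) \bmod 2$, read these off from the two-crossing pattern in Proposition \ref{pp-c} (getting $0$ for odd $p$ and $\pm n$ for even $p$), and then use the parity of the $p-1$ terms in the properness sum. Your version merely makes explicit two points the paper leaves implicit --- the pure-braid/component-count check and the conversion of generator exponents $(-1)^{k+1}$ into crossing signs --- which is sound bookkeeping rather than a different argument.
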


\begin{proof}
Let $D= D_1 \cup D_2 \cup \dots \cup D_p$ be the closure of a braid diagram $B_W(p,np)$. 
When $p$ is an odd number, by the proof of Proposition \ref{pp-c}, $lk(D_i, D_j)=0$ holds for any pair of $i \neq j$. 
Hence $W(p,np)$ is proper when $p$ is odd. 
When $p$ is an even number, by the proof of Proposition \ref{pp-c}, $lk(D_i, D_j)= \pm n$ for each pair of $i \neq j$. 
Since $D$ represents a $p$-component link and therefore each $D_i$ interacts odd number of other components, $\sum_{j \neq i} lk(D_i, D_j) \equiv n \pmod{2}$. 
Hence, $W(p,np)$ is proper if and only if $n$ is even when $p$ is even. 
\end{proof}

\noindent Let $u_R(L)$ denote the minimal number of region crossing changes which are needed to obtain a diagram of a trivial link from a minimal diagram of a proper link $L$. 
By Lemma \ref{prop-drpp}, we have the following. 

\begin{prop}
When $p$ is an odd number, 
$$u_R(W(p, np))\leq \frac{n(p^2 -1)}{4}.$$ 
\end{prop}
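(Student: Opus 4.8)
The plan is to run the argument of Theorem~\ref{thm-r12} in the degenerate case $r=0$, adapted from the knot setting to the link setting. First I would fix the minimal diagram to be the closure $D$ of $B_W(p,np)$, which is a reduced alternating diagram and hence minimal, with $c(D)=(p-1)np$; since $p$ is odd, the preceding corollary guarantees that $W(p,np)$ is proper, so $u_R(W(p,np))$ is defined and computed from such a minimal diagram. The braid itself factors as $B_W(p,np)=\bigl(B_W(p,p)\bigr)^n$, so I would decompose it into $n$ consecutive blocks, the $k$-th block being a copy of $B_W(p,p)$ occupying rounds $(k-1)p+1,\dots,kp$, exactly as in the proof of Theorem~\ref{thm-42-1}.

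The key steps, in order, are as follows. First, I would observe that $B_W(p,p)$ is a \emph{pure} braid: by Proposition~\ref{pp-c} each pair of strands meets in exactly two crossings, so every pair of strands ends in the same relative order it started, forcing the permutation $\rho$ to be the identity. Second, I would apply Lemma~\ref{prop-drpp} to each block: a single $B_W(p,p)$ can be turned into a braid of warping degree zero by at most $\frac{p^2-1}{4}$ region crossing changes performed on interior regions $r^i_j$ with $1\le i\le p-1$. Inside $B_W(p,np)$ these become the regions $r^{(k-1)p+i}_j$, each lying strictly between two rounds of the same block, so the corresponding region crossing changes alter only crossings of that block; the $n$ families of moves are therefore mutually independent. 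Third, since a region crossing change leaves the underlying projection (and hence the permutation) unchanged, each block remains a pure braid of warping degree zero, so Lemma~\ref{d0-t} makes it equivalent to the trivial braid. Consequently, after performing all the region crossing changes—at most $n\cdot\frac{p^2-1}{4}$ of them in total—the whole braid is equivalent to the trivial braid, its closure $D$ becomes a diagram of the $p$-component trivial link, and we conclude
$$u_R(W(p,np)) \le n\,\frac{p^2-1}{4}.$$

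I expect the only point requiring genuine care—rather than a routine citation—to be the locality and independence claim in the second step. One must verify that the region-selection scheme of Lemma~\ref{prop-drpp} (the sets $R_1$, $R_2$) stays within the regions strictly interior to a single $B_W(p,p)$ block, never invoking a region straddling two blocks and never touching the forbidden boundary regions $r^{np}_j$, $s_1$, or $s_2$, so that the blocks can indeed be trivialized one at a time without the moves interfering across blocks. Once this is confirmed, the purity observation and Lemma~\ref{d0-t} close the argument immediately, and the count is exactly $n$ times the bound of Lemma~\ref{prop-drpp}.
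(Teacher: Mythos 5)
Your proof is correct and is essentially the paper's own argument: the paper deduces this proposition directly from Lemma~\ref{prop-drpp} via the same decomposition of $B_W(p,np)$ into $n$ blocks of $B_W(p,p)$, each trivialized by region crossing changes and Lemma~\ref{d0-t}, exactly as in Theorems~\ref{thm-42-1} and~\ref{thm-r12}. The details you flag and verify (properness of $W(p,np)$ for odd $p$, purity of each block, and the fact that the regions of $R_1$ or $R_2$ lie strictly inside a single block so the $n$ families of moves do not interfere) are left implicit in the paper but are exactly the right points to check.
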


\noindent In the following example, we give an algorithm for the weaving links $W(p,np)$ with even numbers $p$, $n$ to find a set of regions which unlink it by region crossing changes. 

\begin{ex}
Let $B_W(p,np)$ be a braid diagram of a weaving link of  even numbers $p$, $n$. 
Divide $B_W(p,np)$ into $\frac{n}{2}$ braids where each one is $B_W(p,2p)$. 
For a braid diagram $B_W(p,2p)$, take a sequence of base points $\mathbf{b}=(b_1, b_2, \dots , b_p)$, where $b_1, b_2, \dots , b_p$ are base points given on the top of strands from left to right. 
We denote by $s_i$ the strand with the base point $b_i$. 
By Proposition \ref{pp-c}, each pair of strands $s_i$ and $s_j$ ($i \neq j$) has exactly two warping crossing points between them in $B_W(p,2p)$ with $\mathbf{b}$. 
Let $R_{i j}$ be the set of all regions of $B_W(p,2p)$ which is bounded by the two paths of $s_i$ and $s_j$ between the two warping crossing points, as shown in Figure \ref{fig-w6}. 
Then, region crossing changes at all the regions in $R_{i j}$ change exactly the two warping crossing points. 
Take a symmetric difference for all $R_{i j}$ ($i<j$). 
We obtain a set of regions which changes all the warping crossing points of $(B_W(p,2p))_{\mathbf{b}}$. 
\begin{figure}[ht]
\centering
\includegraphics[width=8cm]{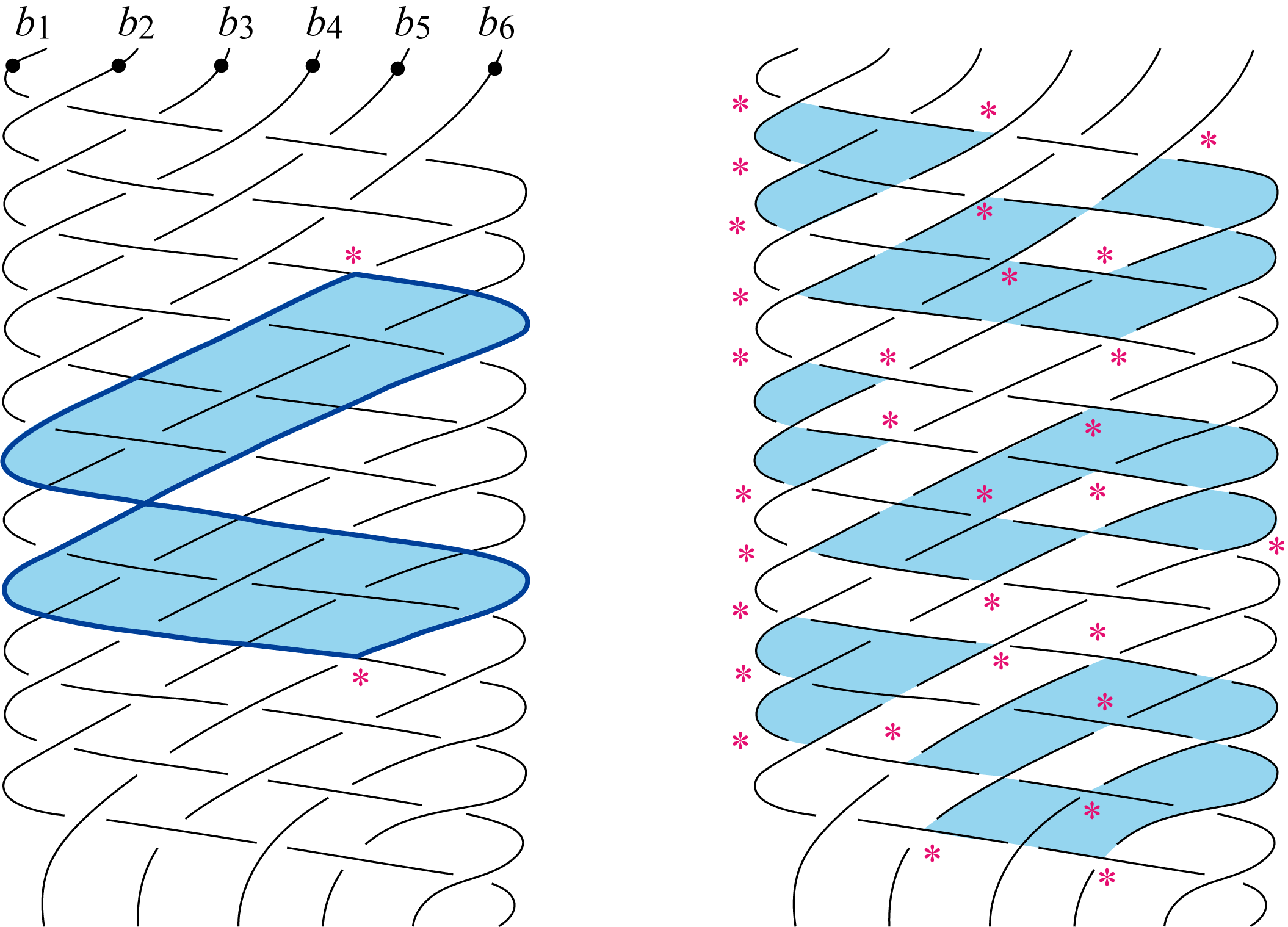}
\caption{The regions in $R_{1 3}$ are shaded on the left-hand side. A set of the regions which changes all the warping crossing points of $(B_W(6,12))_{\mathbf{b}}$ are shaded on the right-hand side. }
\label{fig-w6}
\end{figure}
\end{ex}

\appendix
\section{Isolate-region number and warping degree of weaving knot diagrams}

In this appendix, we investigate the isolate-region number of weaving link diagrams and give a lower bounds for the warping degree. 
When $D$ is an oriented alternating knot diagram, we can directly obtain the value of $d(D)$ as $d(D_b)$ by taking a base point $b$ just before an over-crossing (\cite{ASw}). 
Moreover, when $D$ is alternating, we can also obtain the warping degree with orientation reversed as $d(-D)=c(D)-d(D)-1$ (\cite{ASw}). 
For an oriented minimal diagram $D$ of each weaving knot $W(p,q)$, we can obtain the value of $d(D)$ and $d(-D)$ by the above procedure.  

\begin{ex}
Let $d(W(p,q))$ denote the warping degree of the minimal diagram of the weaving knot $W(p,q)$ with an orientation of smaller warping degree. We have $d(W(3, 3n+1))=2n$, $d(W(3, 3n+2))=2n+1$, $d(W(4, 4n+1))=6n+1$, $d(W(4, 4n+3))=6n+4$, $d(W(5, 5n+1))=8n+1$, $d(W(5, 5n+2))=6n+2$, $d(W(5, 5n+3))=6n+3$ and $d(W(5, 5n+4))=8n+6$.
\end{ex}

\noindent The {\it isolate-region number} of a link diagram $D$, denoted by $I(D)$, is the maximum number of regions of $D$ such that any pair of regions share no crossings (\cite{TA}). 
It has been shown  that the inequality $I(D)-1 \leq d(D) \leq c(D)-I(D)$ holds for any oriented alternating knot diagram $D$ (see \cite{TA} and \cite{AA}. The relation on non-alternating diagrams are also discussed in \cite{TA}.) 
Using this inequality, for alternating knot diagrams, we can estimate the warping degree without traveling the diagram or considering the appropriate sequence of base points which follow the closure for braid diagrams. 
We have the following. 

\begin{prop}
Let $D$ be the minimal diagram of a weaving link $W(p,q)$. Then,
\begin{align*}
I(D) \geq 
\begin{cases}
\frac{(p-1)q}{4} & \text{if $q \equiv 0 \pmod{4}$,} \\
\frac{(p-1)(q-1)}{4} & \text{if $q \equiv 1 \pmod{4}$,} \\
\frac{(p-1)(q-2)}{4} + \left\lfloor \frac{p+2}{4}\right\rfloor  & \text{if $q \equiv 2 \pmod{4}$,} \\
\frac{(p-1)(q-3)}{4} + \left\lfloor \frac{p+2}{4}\right\rfloor + \left\lfloor \frac{p+1}{4}\right\rfloor & \text{if $q \equiv 3 \pmod{4}$. }
\end{cases}
\end{align*}
\label{prop-id}
\end{prop}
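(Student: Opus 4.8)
The plan is to prove the stated lower bound by exhibiting, for each residue of $q$ modulo $4$, an explicit collection $\mathcal{R}$ of regions of the minimal diagram $D$ that pairwise share no crossing and whose size equals the claimed value. Since $I(D)$ is by definition the maximum cardinality of such a collection, producing one $\mathcal{R}$ of the right size proves the inequality.

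First I would record the face structure of $D$. As in the proof that $W(p,q)$ is prime, the closure of $B_W(p,q)$ has exactly the $(p-1)q$ regions $r^i_j$ (with $1 \le i \le q$, $1 \le j \le p-1$) together with the two outer $q$-gons $s_1$ and $s_2$; among the $r^i_j$, the boundary columns $j=1$ and $j=p-1$ are the $2q$ triangles, while the interior columns $2 \le j \le p-2$ are the $(p-3)q$ quadrilaterals. These $r^i_j$ tile the braid annulus, so each crossing is a vertex of exactly four of them. Hence two regions share a crossing exactly when they both contain it on their boundary, i.e.\ when they are \emph{king-adjacent} in the grid of the $r^i_j$ --- a grid that is cyclic in the round index $i$ and distorted by the triangular caps at $j=1,p-1$. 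The task therefore reduces to finding a maximum set of grid cells, no two king-adjacent, and I would not admit $s_1$ or $s_2$ into $\mathcal{R}$ since each meets $q$ crossings and so conflicts with too many cells.

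Next I would construct $\mathcal{R}$. The over/under information of Proposition \ref{pp-c} places the two crossings between any pair of strands in columns of opposite parity, so the natural packing selects cells along a fixed diagonal governed by a linear congruence in $i$ and $j$ at density $\tfrac14$. On the first $4\lfloor q/4\rfloor$ rounds this periodic choice contributes $(p-1)\,(q-(q \bmod 4))/4$ regions and closes up consistently across the seam where round $q$ meets round $1$. For the remaining $q \bmod 4$ rounds at the seam the cyclic king-adjacency constraints force a strictly smaller independent set, and a direct local optimization in this width-$(p-1)$ strip, including the triangular caps, should yield the extra contributions $0$, $\lfloor (p+2)/4\rfloor$, and $\lfloor(p+2)/4\rfloor + \lfloor(p+1)/4\rfloor$ for $q \equiv 1, 2, 3 \pmod 4$ respectively. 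Summing the bulk and seam contributions then reproduces the four stated expressions, after which it remains only to verify columnwise and across the seam that no two chosen regions are king-adjacent.

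The hard part will be the seam analysis together with the interaction between parity and the triangular boundary. The two-fold over/under alternation of Proposition \ref{pp-c}, the cyclic closure, and the caps at $j=1,p-1$ conspire to make the count genuinely depend on $q \bmod 4$ (so that $q \equiv 0$ and $q \equiv 2$, though both even, give different formulas), and pinning the seam contributions to exactly $\lfloor(p+2)/4\rfloor$ and $\lfloor(p+1)/4\rfloor$ will in turn require splitting on $p \bmod 4$. By contrast, the bulk periodic count and the pairwise non-adjacency check are routine once the king-adjacency rule and the seam packing have been fixed.
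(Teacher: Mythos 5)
Your overall strategy is the same as the paper's: prove the bound by exhibiting an explicit pairwise non-adjacent family of regions $r^i_j$ chosen by a linear congruence mod $4$, count $p-1$ regions per block of four rounds, and add partial-round contributions when $q\equiv 2,3 \pmod 4$. The paper's concrete choice is the slope-one diagonal pattern $i \equiv j \pmod 4$ on the rounds $1 \le i \le 4\lfloor q/4\rfloor$, extended by one (resp.\ two) further rounds when $q \equiv 2$ (resp.\ $q\equiv 3$) mod $4$, together with the identity $\lfloor (p+2)/4\rfloor + \lfloor (p+1)/4\rfloor + \lfloor p/4\rfloor + \lfloor (p-1)/4\rfloor = p-1$ for the bulk count.

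However, there is a genuine gap, and it sits exactly at the step you defer. Your adjacency model is wrong: two regions do not share a crossing exactly when they are king-adjacent, and the deviation is not a boundary effect of the triangular caps. Because consecutive rounds are staircases offset by one column, the crossings on the boundary of $r^i_j$ are $c^i_j$, $c^i_{j+1}$, $c^{i+1}_{j-1}$, $c^{i+1}_j$ (nonexistent ones omitted), so $r^i_j$ and $r^{i+1}_{j'}$ share a crossing precisely when $j' \in \{j-2,\, j-1,\, j\}$ (up to a reflection of drawing conventions), while strips at distance two or more never share a crossing at all. The inter-strip adjacency is thus sheared \emph{everywhere}, not symmetric. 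This is fatal to the plan as written: under genuine king-adjacency, a density-$1/4$ packing along a slope-$\pm 1$ diagonal is never independent (diagonally adjacent cells conflict), so the diagonals consistent with your stated rule are the slope-$\pm 2$ ones with $j' - j \equiv 2 \pmod 4$ between consecutive strips; but those are exactly the diagonals that fail in the actual diagram, since $j'-j \equiv 2 \pmod 4$ includes the true conflict $j' = j-2$. Conversely, the paper's pattern $i \equiv j \pmod 4$ is independent only \emph{because} of the shear: consecutive selected strips satisfy $j'-j \equiv 1 \pmod 4$, which avoids $\{j-2, j-1, j\}$. So any diagonal you selected in accordance with your own adjacency rule would fail the final verification, and the diagonal that works would appear to fail it. Beyond this, the quantitative content of the proposition is asserted rather than proved: the extra contributions $\lfloor (p+2)/4\rfloor$ and $\lfloor (p+1)/4\rfloor$ for the added rounds, and the fact that \emph{no} region of the last strip can be added when $q \equiv 1 \pmod 4$ (each one conflicts either with strip $q-1$ or, across the seam, with strip $1$), are precisely the computations that distinguish the four cases of the statement, and "should yield" does not discharge them.
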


\begin{proof}
For $1 \leq i \leq 4 \lfloor \frac{q}{4} \rfloor$, take all the regions $r^i_j$ with $i \equiv j \pmod{4}$, as shown in Figure \ref{fig-w11}. 
In $1 \leq i \leq 4$, the number of taken regions is 
$$f(p)=  \left\lfloor \frac{p+2}{4}\right\rfloor + \left\lfloor \frac{p+1}{4}\right\rfloor + \left\lfloor \frac{p}{4}\right\rfloor + \left\lfloor \frac{p-1}{4}\right\rfloor$$
and we have $f(p)=p-1$ since $f(3)=2$ and by
\begin{align*}
f(k)=  & \left\lfloor \frac{k+2}{4}\right\rfloor + \left\lfloor \frac{k+1}{4}\right\rfloor + \left\lfloor \frac{k}{4}\right\rfloor + \left\lfloor \frac{k-1}{4}\right\rfloor , \\
f(k+1)=  & \left\lfloor \frac{k+3}{4}\right\rfloor + \left\lfloor \frac{k+2}{4}\right\rfloor + \left\lfloor \frac{k+1}{4}\right\rfloor + \left\lfloor \frac{k}{4}\right\rfloor ,
\end{align*}
we can see $f(k+1)-f(k)=1$. 
Hence in $1 \leq i \leq 4 \lfloor \frac{q}{4} \rfloor$, the number of regions $r^i_j$ with $i \equiv j \pmod{4}$ is $\lfloor \frac{q}{4} \rfloor (p-1)$. 
When $q \equiv 2, 3 \pmod{4}$, we can take 1, 2 more rounds, and the number of the regions $r^i_j$ with $i \equiv j \pmod{4}$ is $\left\lfloor \frac{p+2}{4}\right\rfloor$, $\left\lfloor \frac{p+2}{4}\right\rfloor + \left\lfloor \frac{p+1}{4}\right\rfloor$, respectively. 
Hence we have 
\begin{align*}
I(D) \geq 
\begin{cases}
\left\lfloor \frac{q}{4} \right\rfloor (p-1) & \text{if $q \equiv 0$ or $1 \pmod{4}$,} \\
\left\lfloor \frac{q}{4} \right\rfloor (p-1) + \left\lfloor \frac{p+2}{4}\right\rfloor  & \text{if $q \equiv 2 \pmod{4}$,} \\
 \left\lfloor \frac{q}{4} \right\rfloor (p-1) + \left\lfloor \frac{p+2}{4}\right\rfloor + \left\lfloor \frac{p+1}{4}\right\rfloor & \text{if $q \equiv 3 \pmod{4}$. }
\end{cases}
\end{align*}
When $q \equiv 0, 1, 2, 3 \pmod{4}$, $\lfloor \frac{q}{4} \rfloor = \frac{q}{4}, \frac{q-1}{4}, \frac{q-2}{4}, \frac{q-3}{4}$, respectively. 
\begin{figure}[ht]
\centering
\includegraphics[width=5cm]{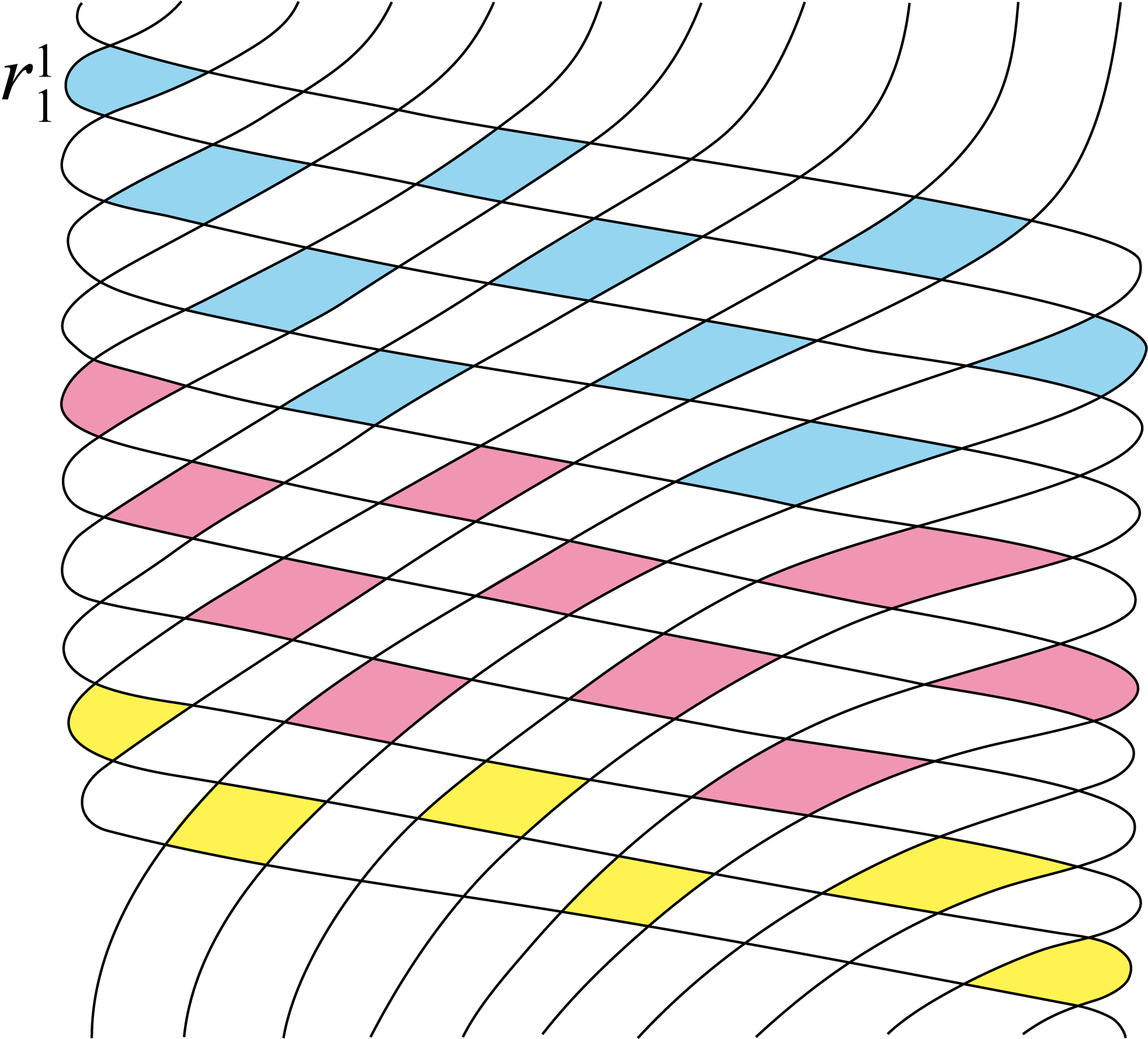}
\caption{$W(11,11)$, the case of $q \equiv 3 \pmod{4}$. }
\label{fig-w11}
\end{figure} 
\end{proof}

\noindent Since $I(D)-1 \leq d(D)$  holds for oriented alternating knot diagrams $D$, we have a lower bound for the warping degree of a weaving knot diagram from Proposition \ref{prop-id}. 

\begin{cor}
For the minimal diagram $D$ of a weaving knot $W(p,q)$, we have 
\begin{align*}
d(D) \geq 
\begin{cases}
\frac{(p-1)q}{4} -1 & \text{if $q \equiv 0 \pmod{4}$, }\\
\frac{(p-1)(q-1)}{4} -1 & \text{if $q \equiv 1 \pmod{4}$, }\\
\frac{(p-1)(q-2)}{4} + \left\lfloor \frac{p+2}{4}\right\rfloor -1 & \text{if $q \equiv 2 \pmod{4}$, }\\
\frac{(p-1)(q-3)}{4} + \left\lfloor \frac{p+2}{4}\right\rfloor + \left\lfloor \frac{p+1}{4}\right\rfloor -1 & \text{if $q \equiv 3 \pmod{4}$. }\\
\end{cases}
\end{align*}
\label{cor-wd-l}
\end{cor}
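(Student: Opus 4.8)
The plan is to obtain the bound directly from Proposition \ref{prop-id} together with the inequality $I(D) - 1 \leq d(D)$ for oriented alternating knot diagrams, recalled just above from \cite{TA} and \cite{AA}. Everything reduces to checking that this inequality applies to the diagram $D$ in question and then performing a one-line subtraction in each residue class.

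First I would observe that, since $W(p,q)$ is a weaving knot, we have $\gcd(p,q)=1$ with $p \geq 3$ and $q \geq 2$, so by Proposition \ref{umd} its minimal diagram $D$ is the closure of the braid diagram $B_W(p,q)$, a reduced alternating diagram. The coprimality hypothesis is exactly what guarantees that $D$ has a single component, so that $D$ is an alternating \emph{knot} diagram rather than a multi-component link diagram, and the inequality $I(D) - 1 \leq d(D)$ is therefore available.

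Then, for each of the four residue classes of $q$ modulo $4$, I would substitute into $d(D) \geq I(D) - 1$ the corresponding lower bound on $I(D)$ supplied by Proposition \ref{prop-id}. Subtracting $1$ from each of the four bounds reproduces verbatim the four cases in the statement: for instance, when $q \equiv 2 \pmod 4$ one gets $d(D) \geq I(D) - 1 \geq \frac{(p-1)(q-2)}{4} + \lfloor \frac{p+2}{4}\rfloor - 1$, and the remaining three cases follow identically.

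Since both ingredients are already in hand, there is no genuine obstacle: the statement is an immediate corollary. The only point deserving attention is the applicability of $I(D) - 1 \leq d(D)$, which requires $D$ to be an alternating knot (not link) diagram, and this is precisely what the defining hypotheses of a weaving knot secure; beyond this there is no computation other than the one-line subtraction in each case.
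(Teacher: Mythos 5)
Your proof is correct and follows exactly the paper's own route: the paper derives this corollary by substituting the lower bounds on $I(D)$ from Proposition \ref{prop-id} into the inequality $I(D)-1 \leq d(D)$ for oriented alternating knot diagrams, which is precisely your argument. Your added remark that $\gcd(p,q)=1$ ensures $D$ is a one-component (knot) diagram, so the inequality applies, is a reasonable point of care that the paper leaves implicit.
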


\noindent The {\it minimal warping degree} of a knot $K$, denoted by $md(K)$, is the minimal value of the warping degree $d(D)$ for all oriented minimal diagrams $D$ of $K$ (\cite{JS}). 
In \cite{ASp}, all the prime alternating knots of minimal warping degree one and two are determined. 
In the following corollary, we determine all the weaving knots of minimal warping degree up to three. 

\begin{cor}
Let $W(p,q)$ be a weaving knot with $p \geq 3$, $q \geq 2$, $\gcd (p,q)=1$. 
\begin{description}
\item[(1)] $md(W(p,q))=1$ if and only if $(p,q)= (3,2)$. 
\item[(2)] $md(W(p,q))=2$ if and only if $(p,q)=(3,4)$ or $(5,2)$. 
\item[(3)] $md(W(p,q))=3$ if and only if $(p,q)=(3,5), (5,3)$ or $(7,2)$. 
\end{description}
\end{cor}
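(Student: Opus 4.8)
The plan is to prove both directions of each biconditional by combining the lower bounds from Corollary~\ref{cor-wd-l} with explicit small‑case verification. The key observation is that $md(W(p,q)) \geq d(D)$ is \emph{not} what we want; rather, $md$ is the minimum of $d(D)$ over minimal diagrams, and by Proposition~\ref{umd} each weaving knot has a \emph{unique} minimal diagram on $S^2$, so $md(W(p,q))$ equals $d(D)$ for the orientation giving the smaller warping degree, i.e.\ exactly the quantity $d(W(p,q))$ computed in the Example following Corollary~\ref{cor-wd-l}. Thus the whole corollary reduces to reading off when this single computable number equals $1$, $2$, or $3$.

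First I would establish the ``only if'' direction by using Corollary~\ref{cor-wd-l} as a growth estimate: for fixed residue of $q\bmod 4$, the lower bound for $d(D)$ grows linearly in both $p$ and $q$, so only finitely many pairs $(p,q)$ can have $d(D)\le 3$. Concretely, I would show that $d(W(p,q))\ge 4$ whenever $p\ge 9$, or $q\ge 6$, or for the remaining moderate pairs, by plugging into the four cases of Corollary~\ref{cor-wd-l} and checking the arithmetic. This isolates a short finite list of candidate pairs $(p,q)$ with $p\in\{3,5,7\}$ and small $q$ (subject to $\gcd(p,q)=1$, $p\ge 3$, $q\ge 2$). For each surviving candidate I would then compute $d(W(p,q))$ exactly from the formulas in the Example (e.g.\ $d(W(3,3n+2))=2n+1$ gives $d(W(3,2))=1$, $d(W(3,5))=3$; $d(W(5,5n+2))=6n+2$ gives $d(W(5,2))=2$; $d(W(5,5n+3))=6n+3$ gives $d(W(5,3))=3$; and so on), confirming each value is $1$, $2$, or $3$ as claimed, and ruling out the rest.

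For the ``if'' direction I would simply verify that each listed pair does achieve the stated value, which follows immediately from the same explicit formulas in the Example: each formula specializes at the relevant $n$ to give precisely $1$, $2$, or $3$. Since those formulas already record $d(W(p,q))$ for $p=3,5$ (and for $p=7$, the single needed value $d(W(7,2))$ can be computed directly from the minimal diagram, or via $md \le \overline{d}(B_W(7,2)) \le (p-1)/2 = 3$ together with the lower bound $d(D)\ge 3$ from Corollary~\ref{cor-wd-l} in the $q\equiv 2$ case), the equalities hold.

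The main obstacle I anticipate is \emph{sharpness} of the lower bound: Corollary~\ref{cor-wd-l} gives $d(D)\ge I(D)-1$, and to pin down $md$ exactly I need the \emph{upper} side $d(D)\le c(D)-I(D)$ to be tight as well, or else I must fall back on the direct computations of $d(W(p,q))$ from the Example. The cleanest route is therefore to treat Corollary~\ref{cor-wd-l} purely as a tool to bound the candidate list from above (finiteness), and to rely on the exact values $d(W(p,q))$ already tabulated in the Example for the actual identification—so the real work is (i) proving the finiteness cutoff rigorously for all $p\ge 7$ and $q\ge 5$ simultaneously, and (ii) handling $p=7$, where the Example does not tabulate a formula and a small separate argument for $W(7,2)$ is required.
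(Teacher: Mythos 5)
Your overall strategy---reduce to a finite candidate list via Corollary~\ref{cor-wd-l}, then evaluate exact warping degrees, using Proposition~\ref{umd} to identify $md(W(p,q))$ with the single quantity $d(W(p,q))$---is exactly the paper's strategy, but your execution of the finiteness step contains an arithmetic error that leaves a genuine gap. For $q\equiv 2\pmod 4$ the lower bound of Corollary~\ref{cor-wd-l} at $q=2$ is only $\left\lfloor \frac{p+2}{4}\right\rfloor-1$, which does not reach $4$ until $p\geq 19$; for $q=3$ it is $\left\lfloor \frac{p+2}{4}\right\rfloor+\left\lfloor \frac{p+1}{4}\right\rfloor-1$, which stays $\leq 3$ through $p=8$. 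So your claimed cutoff ``$d(W(p,q))\geq 4$ whenever $p\geq 9$ or $q\geq 6$'' is false as a consequence of the corollary, and your candidate list ``$p\in\{3,5,7\}$ and small $q$'' is wrong. The set of pairs actually surviving the filter (the one the paper lists) is $(3,2)$, $(3,4)$, $(3,5)$, $(3,7)$, $(3,8)$, $(4,3)$, $(4,5)$, $(5,2)$, $(5,3)$, $(5,4)$, $(7,2)$, $(7,3)$, $(8,3)$, $(9,2)$, $(11,2)$, $(13,2)$, $(15,2)$, $(17,2)$. The pairs $(7,3)$, $(8,3)$, $(9,2)$, $(11,2)$, $(13,2)$, $(15,2)$, $(17,2)$ cannot be excluded by the lower bound at all; they must be eliminated by computing the warping degrees of their (unique) minimal diagrams with both orientations, as the paper does, and the Example's formulas only cover $p=3,4,5$, so these computations are additional work your plan does not provide for.

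The same arithmetic error undermines your treatment of $W(7,2)$: you propose to pin down $md(W(7,2))=3$ by sandwiching between ``the lower bound $d(D)\geq 3$ from Corollary~\ref{cor-wd-l} in the $q\equiv 2$ case'' and the upper bound $\overline{d}(B_W(7,2))\leq (p-1)/2$. But the corollary gives only $d(D)\geq \left\lfloor \frac{9}{4}\right\rfloor-1=1$ for $(7,2)$, and the bound $\overline{d}(B_W(p,2))\leq (p-1)/2$ is established nowhere in the paper (Lemma~\ref{u-r2} bounds the unknotting number $u(B_W(p,2))$, not the closed warping degree; $u(B)\leq \overline{d}(B)$ goes the wrong way). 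So for $W(7,2)$, as for the other candidates your filter misses, the only available route is direct computation on the minimal diagram. In short: your reduction of $md$ to $d(W(p,q))$ is correct and matches the paper, but the candidate enumeration is incorrect and roughly half of the required case checks are absent.
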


\begin{proof}
All the pairs $(p,q)$ with $p \geq 3$, $q \geq 2$, $\gcd (p,q)=1$ such that the right-hand side of the inequality of Corollary \ref{cor-wd-l} is less than or equal to three are $(3,2)$, $(3,4)$, $(3,5)$, $(3,7)$, $(3,8)$, $(4,3)$, $(4,5)$, $(5,2)$, $(5,3)$, $(5,4)$, $(7,2)$, $(7,3)$, $(8,3)$, $(9,2)$, $(11,2)$, $(13,2)$, $(15,2)$, $(17,2)$. 
By counting the warping degrees for each minimal diagram with both orientations (we can use the equality $d(-D)= c(D)-d(D)-1$), we can find out all the weaving knots of minimal warping degree one, two and three.  
\end{proof}

\begin{rem}
From the inequality $d(D) \leq c(D)-I(D)$ for oriented alternating knot diagrams $D$, we also have an upper bound for $d(D)$ for the minimal diagram $D$ of a weaving knot $W(p,q)$ as follows: 

\begin{align*}
d(D) \leq 
\begin{cases}
\frac{3(p-1)q}{4} 
& \text{if $q \equiv 0 \pmod{4}$, }\\
\frac{(p-1)(3q+1)}{4}
& \text{if $q \equiv 1 \pmod{4}$, }\\
\frac{(p-1)(3q+2)}{4} - \left\lfloor \frac{p+2}{4}\right\rfloor 
& \text{if $q \equiv 2 \pmod{4}$, }\\
\frac{(p-1)(3q+3)}{4} - \left\lfloor \frac{p+2}{4}\right\rfloor - \left\lfloor \frac{p+1}{4}\right\rfloor
& \text{if $q \equiv 3 \pmod{4}$. }\\
\end{cases}
\end{align*}
\label{d-up}

\noindent However, we already have a better upper bound $d(D) \leq \frac{c(D)-1}{2}$ for any $D$ with $c(D) \geq 1$ (see, for example, \cite{ASw, ASu}). 
If we can find more effective upper bounds for $d(D)$ or lower bounds for $I(D)$, it would be useful to estimate the unknotting number from a point of view of warping degree. 
\end{rem}

\section*{Acknowledgment}

The authors are deeply grateful to Dr.~Madeti Prabhakar for his support and encouragement during their stay at IIT Ropar in May 2024. 
The first author's work was partially supported by JSPS KAKENHI Grant Number JP21K03263.

\end{document}